\newtheorem{theomain}{Theorem}
\newtheorem{thm}{Theorem}[section]
\newtheorem{prop}[thm]{Proposition}
\newtheorem{lem}[thm]{Lemma}
\newtheorem{cor}[thm]{Corollary}
\newtheorem{ques}[thm]{Question}
\newtheorem{ex}[thm]{Example}
\newtheorem{rem}[thm]{Remark}
\def\N{\mathbb{N}}
\def\Z{\mathbb{Z}}
\def\N{\mathbb{N}}
\def\R{\mathbb{R}}
\def\F{\mathbb{F}}
\def\T{\mathbb{T}}
\def\XX{\mathcal{X}}
\def\FF{\mathcal{F}}
\def\SS{\mathcal{S}}
\def\mdim{\text{\rm mdim}}
\def\supp{\text{\rm supp}}
\def\stab{\text{\rm stabdim}}
\def\Id{\text{\rm Id}}
\def\diam{\text{\rm diam}}
\def\AA{\mathcal{A}}
\def\BB{\mathcal{B}}
\def\Id{\text{\rm Id}}
\numberwithin{equation}{section}
\title{Mean dimension of continuous cellular automata}
	\author{David Burguet}
	\address
	{Sorbonne Universite, LPSM, 75005 Paris, France}
	\email{david.burguet@upmc.fr}
	\author{Ruxi Shi}
\address
{Institute of Mathematics, Polish Academy of Sciences, ul. \'Sniadeckich 8, 00-656 Warszawa, Poland}
\email{rshi@impan.pl}
\subjclass[2020]{37B15, 54F45, 68Q80}
\keywords{cellular automata, mean dimension, natural extension}
\begin{document}

	\maketitle

	\begin{abstract}

 We investigate the mean dimension of a cellular automaton (CA for short) with a compact non-discrete space of states. 
 A formula for the mean dimension  is established for\textit{ (near) strongly permutative}, \textit{permutative algebraic} and \textit{unit} one-dimensional automata. In higher dimensions,  a CA permutative algebraic or having a spaceship has infinite mean  dimension. However, building on Meyerovitch's example \cite{meyerovitch2008finite}, we give an example of algebraic surjective cellular automaton with positive finite mean dimension. 
	\end{abstract}

\tableofcontents
	
\section{Introduction}	
	The most basic topological invariant of topological dynamical systems is the topological entropy which has been studied for a long time. Gromov \cite{G} introduced a new topological invariant of dynamical systems called {\it mean topological dimension} 
	as a dynamical analogue of topological covering dimension. It was further developed by Lindenstrauss and Weiss \cite{LindenstraussWeiss2000MeanTopologicalDimension} with their work on  the {\it metric mean dimension}, the dynamical quantity associated to the Minkowski dimension. A dynamical system of finite entropy or finite dimension has zero mean dimension.
	
	Let $X$ be a compact metric space and let $f$ be a continuous function given by  $f:X^I\rightarrow X$ with a finite set $I \subset \Z^d$. A {\it cellular automaton (CA for short)}  on $X^{\Z^d}$ with local rule $f$ is the continuous map $F: X^{\Z^d} \to X^{\Z^d}$ defined by
	$$
	F\left((x_n)_{n\in \Z}\right)=\left(f((x_{n+j})_{j\in I})\right)_{n\in \Z^d}.
	$$  
	We also denote sometimes the cellular automaton  $F$ associated to the local rule $f$ by $T_f$.
	The space $X$ is called the {\it set of states} of the cellular automaton $T_f$. When  $X$ is finite, the topological entropy of a one-dimensional cellular automaton  (i.e. $d=1$)  is known to be finite. Moreover, the explicit value of the topological entropy in some cases may be computed. For example, when $d$ is equal to $1$, the set of states $X$ is the finite field $\F_p$ with a prime number $p$ and the local rule $f$ is linear, i.e. $f\left((x_j)_{j\in I}\right)=\sum_{j\in I} a_j x_j$ for some $a_j\in \F_p^*$, the topological entropy of the cellular automaton $T_f$ on $\F_p^\Z$ is equal to $\diam(I\cup\{0\}) \cdot \log p$
	where $\diam(I\cup\{0\})$ stands for the diameter of $I\cup\{0\}$ with respect to the Euclidean distance on $\R$ \cite{ward2000additive}. For $d>1$, the topological entropy of a linear cellular automaton $F$ is always infinite unless $F=\pm \Id$  \cite{morris1998entropy,lakshtanov2004criterion}. An example of a multidimensional surjective cellular automaton with nonzero finite entropy  was constructed in \cite{meyerovitch2008finite}.\\

	In this paper, we investigate  cellular automata with a non-discrete  compact finite  dimensional set of states $X$. 
	We involve the theory of mean dimension to study these cellular automata. We show in Proposition \ref{prop:upper bound} that the mean dimension of a one-dimensional cellular automaton is bounded from above by $\diam(I\cup\{0\}) \cdot\stab X$, where $\stab X$ denotes the stable topological dimension of $X$. As a consequence, the mean dimension of a one-dimensional automaton is finite. Then one may wonder whether this upper bound is the value of its mean dimension. The {\it permutative} property of  cellular automaton implies that it has a topological factor with mean dimension  $\diam(I\cup\{0\}) \cdot\stab X$.
	However, the permutative property does not ensure that such an upper bound is achieved as the  value of its mean dimension. Several examples are discussed in Section \ref{sec:Examples with a lower dimensional subsystem}. Nevertheless, when a one-dimensional cellular automaton is {\it (near) strongly permutative} or permutative {\it algebraic},  the mean dimension is equal to $\diam(I\cup\{0\}) \cdot\stab X$  (Theorem \ref{strong}, Corollary \ref{near} and Lemma \ref{lem:algebraic}). We summarize  the aforementioned results in the following statement :
	\begin{theomain}
Let $F:X^\Z\circlearrowleft$ be a one-dimensional CA with local rule $f:X^I\rightarrow X$. Then we have 
\begin{equation}\label{main}
\mdim(F) \leq \diam(I\cup\{0\}) \cdot\stab X.
\end{equation}	
The equality holds in (\ref{main}) when $F$ is near strongly permutative or permutative algebraic, but the inequality may be strict for a general permutative CA. 
	\end{theomain}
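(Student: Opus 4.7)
My plan to prove the upper bound (\ref{main}) is a ``light-cone'' argument. For a finite window $W\subset\Z$ and an integer $k\ge 1$, the map $x\mapsto (F^j(x)|_W)_{0\le j<k}$ factors through the canonical projection $X^{\Z}\to X^{W+(k-1)I}$, because a single iterate only couples cells that differ by an element of $I$. The index set $W+(k-1)I$ has cardinality at most $|W|+(k-1)\cdot\diam(I\cup\{0\})$. I would combine this with the Lindenstrauss--Weiss description of $\mdim$ via averaged $\wdim_\epsilon$, using the sub-multiplicativity of the $\epsilon$-width under finite products and the fact that $\wdim_\epsilon(X^m)$ is asymptotic to $m\cdot\stab X$. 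Letting $k\to\infty$ first with $W$ fixed, and then $|W|\to\infty$, yields the desired upper bound.

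For the equality under the near strongly permutative hypothesis, my plan is to build, inside $X^{\Z}$, an invariant subsystem isomorphic to a shift whose alphabet realises the dimension $\stab X$ and whose width matches $\diam(I\cup\{0\})$. Strong permutativity at both extremes of $I$ allows one to solve the local rule at a single site forward and backward at each time step, so that prescribed ``boundary data'' of full local dimension $\stab X$ can be extended consistently to an orbit; iterating this for $k$ time steps produces an embedded full shift on $(X^d)^{\Z}$ with $d$ asymptotic to $\diam(I\cup\{0\})$, whose mean dimension is computed directly. The ``near'' case is treated by approximation: replacing $F$ by a family of truly permutative surrogates and using semi-continuity of mean dimension along natural extensions (the inverse-limit framework the paper sets up) to transfer the lower bound back to $F$. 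For the permutative algebraic case the local rule is a continuous surjective group homomorphism, and I would reduce to its kernel to identify $F$ (or its natural extension) with a group shift in which the contribution of each coordinate to $\mdim$ can be read off from $\stab X$ directly.

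To witness the strict inequality, I would look for a permutative example in which the single coordinate witnessing permutativity is absorbed by a lower-dimensional invariant factor. A natural candidate couples a permutative local rule to a projection onto a zero-dimensional factor, so that the effective width of information propagation is strictly smaller than $\diam(I\cup\{0\})$, even though permutativity still produces a factor with the expected maximal mean dimension. The main obstacle I expect is the lower bound in the near strongly permutative case: extracting a genuinely high-dimensional embedded subsystem while only having permutativity up to approximation forces a backward construction that must stay uniform in the approximation parameter, and it is precisely there that the geometric invariant $\diam(I\cup\{0\})$ (as opposed to the combinatorial size $|I|$) must enter the estimate in the correct way.
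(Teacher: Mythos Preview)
Your light-cone argument for the upper bound is essentially the paper's Proposition~\ref{prop:upper bound}, so that part is fine. The genuine gaps lie in the equality cases.

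For the strongly permutative case, the paper does not embed a full shift as a \emph{subsystem} of $(X^{\Z},F)$; rather it exhibits the full shift on $(X^{J^*})^{\N}$ as a \emph{factor}, via the explicit conjugacy $\phi:x\mapsto(x_0,(F^k(x)_{J^*})_{k\ge 0})$ onto the skew-product $g$ over $\sigma$ (Lemma~\ref{lem:conjugate}), and then invokes the skew-product inequality (Lemma~\ref{lem:skew product}) to get $\mdim(F)=\mdim(g)\ge\mdim(\sigma,(X^{J^*})^{\N})$. Your description of ``extending boundary data to an orbit'' is precisely the surjectivity (resp.\ bijectivity) of $\phi$, but this gives a factor, not an embedded subsystem; mean dimension can increase under factors in general, so the skew-product lemma is essential and your subsystem language is misleading.

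The most serious gap is the near strongly permutative case. Mean dimension is not known to be semicontinuous on $\mathcal{C}(X)$, and the natural-extension/inverse-limit framework you invoke plays no role here. The paper's mechanism is a new notion: the family $\mathcal{F}_N(X)$ of strongly permutative CA's with domain in $[-N,N]$ is shown to be \emph{$m$-expansive}, meaning there is a single open cover $\alpha$ that is a generator (realises $\mdim$) simultaneously for every $F\in\mathcal{F}_N(X)$ (Lemma~\ref{mexpex}, using Tsukamoto's Lemma~\ref{tsu}). This common generator forces $T\mapsto\mdim(T,\alpha)$, hence $T\mapsto\mdim(T)$, to be upper semicontinuous on the closure (Lemma~\ref{lem:m-expansive}), which transfers the equality from $\mathcal{F}_N(X)$ to its closure. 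Your approximation idea is right in spirit, but without $m$-expansiveness there is no semicontinuity to appeal to.

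For the algebraic case the paper again does not ``reduce to the kernel''; it passes to the natural extension $(\widetilde{X^{\Z}_F},\widetilde{F})$, observes that the induced extension $\widetilde{\psi}:(\widetilde{X^{\Z}_F},\widetilde{F})\to(X^{\Z},\sigma^{J^*})$ is an algebraic factor between algebraic actions, and then quotes Li--Liang \cite[Corollary~6.1]{li2018mean}: for algebraic factor maps mean dimension does not drop. Finally, the strict-inequality examples are not ad hoc couplings with a zero-dimensional factor but \emph{unit CA's}: the paper proves $\mdim(T_f)=\stab(\widetilde{X_f})$ for any unit CA (Section~\ref{sec:Unit CA}) and constructs surjective finite-to-one $f:X\circlearrowleft$ with $\dim(\widetilde{X_f})$ strictly smaller than $\dim X$ (Proposition~\ref{examples}), yielding permutative CA's with $\mdim(F)<\stab X\cdot\diam(I\cup\{0\})$.
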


	On the other hand, when considering a unit (i.e. $I=\{1\}$) one-dimensional cellular automaton $(X, T)$, we show that its natural extension is topologically conjugate to a full shift and the mean dimension of it is then given by that of its natural extension. To this end, we also investigate the relation between a dynamical system and its natural extension: whereas the topological entropy is preserved by natural extension, the mean dimension of a dynamical system is always larger than or equal to that of its natural extension (Proposition \ref{nat}) but may differ.  In Proposition \ref{prop:example natural} we give an example of a topological system, whose   mean dimension  is strictly larger than that of its natural extension.



	Furthermore, we study multidimensional cellular automata with a compact non-discrete set of states. We prove that nontrivial multidimensional algebraic permutative cellular automata have infinite mean dimension (Lemma \ref{lem:multidimension algebraic}). 	Building on Meyerovitch's example we construct a multidimensional algebraic non-permutative  surjective cellular automaton with nonzero finite mean dimension (Proposition \ref{prop:example finite nonzero higher dimension}). Beyond the algebraic  property,	we also show that if a  multidimensional cellular automaton has a {\it spaceship} (see the definition in Section \ref{sec:multi}), then its mean dimension is infinite.  
	
	\begin{theomain}
Let $F:X^{\Z^d}\circlearrowleft$ be a CA with $d>1$.  When $F$ is a nontrivial  permutative algebraic CA or $F$ has a spaceship,  the mean dimension of $F$ is infinite. But there are 
algebraic non-permutative $F$ surjective cellular automata with nonzero finite mean dimension.
\end{theomain}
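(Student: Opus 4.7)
The statement splits into three independent claims, which I address in turn.

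\textbf{Permutative algebraic case.} Assume $X$ is a compact metric abelian group and $F$ is the surjective group endomorphism of $X^{\Z^d}$ with local rule $f$ permutative at some $i\in I$. The plan is to show, as in the one-dimensional case, that permutativity enables a recovery of the initial data from the forward orbit, but now over a $d$-dimensional light cone. Reasoning inductively along the permutative direction, I would exhibit, at every small scale $\epsilon$ and every $n$, a topological factor of the Bowen $n$-ball of dimension at least $c\cdot n^d\cdot\stab X$, where the cone $n\cdot\mathrm{conv}(I\cup\{0\})$ meets $\Theta(n^d)$ lattice points. Dividing by $n$ and letting $\epsilon\to 0$ yields $\mdim(F)\ge c\cdot n^{d-1}\cdot\stab X$ for every $n$, whence $\mdim(F)=+\infty$ since $d>1$. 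The main obstacle will be the algebraic recovery lemma in $d$ variables: one cannot simply iterate the $d=1$ argument, and the key use of the algebraic hypothesis is that the kernel of $F^n$ is pinned down by the permutative coefficient of the local rule.

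\textbf{Spaceship case.} A spaceship is a configuration $\xi\in X^{\Z^d}$ with period $T\ge 1$ and velocity $v\in\Z^d\setminus\{0\}$ such that $F^T(\xi)=\sigma^v(\xi)$, varying continuously through a parameter set $P$ of positive topological dimension. I would use $P$ to inject arbitrarily many independent parameters into the Bowen $n$-ball. Fix a scale $\epsilon>0$, set $R\sim\log(1/\epsilon)$ and consider the window $\{j:|j|\le R\}$. A spaceship initially placed in a tube of length $\sim n$ along $-v$ of transverse thickness $R$ passes through the window at some time $k<n$; since $d>1$ the transverse slice carries $\Theta(R^{d-1})$ disjoint lattice sites, so one can place that many independent spaceships there. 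Provided a vacuum background state keeps them non-interacting, one obtains $\wdim_\epsilon(d_n^F)\ge c\cdot R^{d-1}\cdot n$, and dividing by $n$ gives $\mdim(F)\ge c\cdot R^{d-1}\to\infty$ as $\epsilon\to 0$. The delicate point is to guarantee non-interference of the many translated spaceships throughout the entire time interval $[0,n]$, which relies on existence of a background fixed point of $F$ and on a separation lemma for disjoint spaceship supports.

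\textbf{Finite non-zero example.} Following Meyerovitch's construction \cite{meyerovitch2008finite} of a surjective $\Z^d$-cellular automaton of finite positive topological entropy on a finite alphabet $A$, the plan is to replace $A$ by a compact connected abelian group $G$ of positive topological dimension (for instance a torus) and to lift Meyerovitch's combinatorial rule to a continuous surjective group endomorphism of $G^{\Z^d}$ in a way that preserves non-permutativity. The upper bound on the mean dimension is obtained by a direct adaptation of the multidimensional version of Proposition \ref{prop:upper bound}, exploiting that the lifted local rule still has a uniformly bounded fibre dimension. The strictly positive lower bound transfers the exponential orbit separation already present in Meyerovitch's construction into a positive-dimensional family of separated continuous orbits in $G^{\Z^d}$. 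The subtle step will be to match the algebraic group structure with Meyerovitch's non-permutative combinatorial rule while simultaneously preserving surjectivity; this bookkeeping is what makes the construction non-trivial.
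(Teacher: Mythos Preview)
Your three sketches diverge from the paper's arguments in substantive ways, and one of them contains a real gap.

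\textbf{Permutative algebraic case.} The paper does not argue via Bowen-ball dimension counts. Instead it uses the factor map $\phi_{J,J'}$ of Subsection~\ref{previ}: for a permutative CA and a well-chosen convex polytope $J$, one obtains (via \cite{Bur20}) a surjective semi-conjugacy onto a skew product over the full shift on $X^{\underline{\partial_{\mathbb I}^{\bot}J}}$, and the integer set $\underline{\partial_{\mathbb I}^{\bot}J}$ can be made of arbitrarily large cardinality when $d>1$. Then, exactly as in Lemma~\ref{lem:algebraic}, the algebraic hypothesis enters through \cite[Corollary~6.1]{li2018mean}: passing to the natural extension, an \emph{algebraic} extension does not lower mean dimension, so $\mdim(F)\ge \sharp\underline{\partial_{\mathbb I}^{\bot}J}\cdot\stab X$ for arbitrarily large $\sharp\underline{\partial_{\mathbb I}^{\bot}J}$. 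Your ``kernel of $F^n$'' remark gestures at the role of the group structure, but the paper's route is cleaner and avoids any $\epsilon$-dimension bookkeeping.

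\textbf{Spaceship case.} Here you and the paper share the same idea---inject many independent shift orbits via translated spaceships---but the executions differ. The paper's spaceship carries a parameter in a fixed space $T$ with $\stab(T)>0$ and a compatibility $f(t\cdot y)=t\cdot f(y)$; with this structure one writes down, for any $n$, an explicit \emph{conjugacy} between $((T^{\Z})^n,\sigma^{-\otimes n})$ and a subsystem $(\Omega,F^m)$, placing $n$ copies of the spaceship along a direction $u$ not proportional to $v$ (this is where $d>1$ is used). This yields $\mdim(F)\ge n\cdot\stab(T)/m$ for every $n$, with no $\epsilon$-dependent window analysis and no ``non-interference'' lemma beyond choosing $m$ so that the supports $\supp(x)-imv+ju-\mathbb I$ are pairwise disjoint. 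Your route through $R\sim\log(1/\epsilon)$ and $\Theta(R^{d-1})$ transverse slots could be made to work, but it is strictly more laborious, and your looser definition of spaceship (``varying continuously through a parameter set $P$'') lacks the compatibility condition that makes the conjugacy go through.

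\textbf{Finite nonzero example.} This is where there is a genuine gap. You propose to obtain the upper bound by ``a direct adaptation of the multidimensional version of Proposition~\ref{prop:upper bound}''. But for $d>1$ that proposition gives nothing: the light cone after $n$ steps contains $\Theta(n^d)$ lattice sites, so dividing by $n$ yields $\Theta(n^{d-1})\to\infty$, and bounded fibre dimension does not rescue this. The paper's upper bound is structural, not general: the state space is $S\times\mathbb T$ with $S$ a Meyerovitch acyclic directed tile set, and the CA acts on the $\mathbb T$-coordinate by $y_n\mapsto y_n+y_{n+d(x_n)}$ only where the tile pattern is valid. One then decomposes $X=\bigcup_\omega X_\omega$ over tile configurations $\omega$, and on each $X_\omega$ the dynamics splits as a product over connected components of a directed graph $G_\omega$; an inverse-limit argument (using \cite[Proposition~5.8]{shi2021marker}) bounds the mean dimension of each $X_\omega$ by the number $I(\omega)$ of forward-infinite components, and Meyerovitch's tile set ensures $\sup_\omega I(\omega)<\infty$. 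The lower bound comes from a single forward-infinite path, on which the action is the one-dimensional algebraic CA $(x_n)\mapsto(x_n+x_{n+1})$ of mean dimension $1$. Your plan to lift Meyerovitch's rule to a pure group endomorphism of $G^{\Z^d}$ does not match this, and without the tile-graph decomposition you have no mechanism to cap the mean dimension.
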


\section{Background on mean dimension}	\label{sec:background}

	Let $X$ be a compact space. For two finite open covers $\mathcal{A}$ and $\mathcal{B}$ of $X$, we say that the cover $\mathcal{B}$ is \textit{finer} than the cover $\mathcal{A}$, and write $\mathcal{B}\succ \mathcal{A}$, if for every element of $\mathcal{B}$, one can find an element of $\mathcal{A}$ which contains it.  For a finite open cover $\mathcal{A}$  of $X$, we define the quantities 
	$$
	\text{\rm ord}(\mathcal{A}):=\sup_{x\in X} \sum_{A\in \mathcal{A}} 1_A(x)-1,
	$$
	and
	$$
	D(\mathcal{A}):=\min_{\mathcal{B}\succ \mathcal{A}} \text{ord}(\mathcal{B}).
	$$
	Clearly, if $\mathcal{B}\succ \mathcal{A}$ then $D(\mathcal{B})\ge D(\mathcal{A})$. The \textit{(topological) dimension} of $X$ is defined by
	$$
	\text{dim}(X):=\sup_{\mathcal{A}} D(\mathcal{A}),
	$$
	where $\mathcal{A}$ runs over all finite open covers of $X$. 	For a non-empty compact  $X$,  the {\it stable topological dimension}
	of $X$ is given  by
	$$
	\stab(X):=\lim_{n\to \infty} \frac{\dim(X^n)}{n}=\inf_{n\to \infty} \frac{\dim(X^n)}{n}.
	$$
	The limit above exists by sub-additivity of the sequence $\{\dim(X^n) \}_{n\ge 1}$. Moreover, if $X$ is finite dimensional, then we have  either $\stab(X)=\dim(X)$ (the set $X$ is then said of {\it basic type}) or $\stab(X)=\dim(X)-1$ (and $X$ is said of  {\it exceptional type}).

	 For finite open covers $\mathcal{A}$ and $\BB$, we set the joint $\AA \vee \BB:=\{U\cap V: U\in \AA, V\in \BB \}$. It is easy to check that $D(\AA\vee \BB)\le D(\AA)+D(\BB)$. 
	Let $(X, T)$ be a topological  dynamical system, i.e. $X$ is a compact metrizable space and $T:X\circlearrowleft$ is a continuous map. The {\it mean dimension} of $(X, T)$ is defined by
	$$
	\mdim(X, T)=\sup_{\alpha} \lim_{n\to \infty} \frac{D(\bigvee_{i=0}^{n-1} T^{-i}\alpha )}{n},
	$$
	where $\alpha$ runs over all finite open covers of $X$. The existence of the limit  follows from  the sub-additivity of the sequence $\left(D(\bigvee_{i=0}^{n-1} T^{-i}\alpha)\right)_n$. We write also $\mdim(X,T,\alpha)=\lim_{n\to \infty} \frac{D(\bigvee_{i=0}^{n-1} T^{-i}\alpha )}{n}.$

	For a set $Z$ and $\epsilon>0$, a map $f:X\to Z$ is called \textit{$(\rho, \epsilon)$-injective} if diam$(f^{-1}(z))<\epsilon$ for all $z\in Z$. The \textit{$\epsilon$-dimension} $\text{dim}_\epsilon (X, \rho)$ is defined by 
	$$
	\text{dim}_\epsilon (X, \rho)=\inf_Y \text{dim}(Y),
	$$
	where $Y$ runs over all compact metrizable spaces for which there exists a $(\rho, \epsilon)$-injective continuous map $f:X\to Y$.

	We mention some basic properties of mean dimension. We refer to the book \cite{Coo05} for the proofs and further properties. 
	\begin{itemize}
		\item 
	
		$$
		\mdim(X,T)=\sup_{\epsilon} \lim_{n\to \infty} \frac{\dim_{\epsilon}(X, d_n)}{n},
		$$
		where $d$ is a metric on $X$ compatible with the topology and $d_n(x,y)=\max_{0\le i\le n-1}d(T^ix, T^iy)$. We write sometimes $\mdim(X,T,d,\epsilon)=\lim_{n\to \infty} \frac{\dim_{\epsilon}(X, d_n)}{n}.$
		\item If $(Y,T)$ is a subsystem of $(X,T)$, i.e. $Y$ is a closed $T$-invariant subset of $X$, then $\mdim(Y,T)\le \mdim(X,T)$.
		\item For $n\in \N$, $\mdim(X,T^n)=n\cdot \mdim(X,T)$.
		\item For dynamical systems $(X_i,T_i)$, $1\le i\le n$, we have
		$$
		\mdim(X_1\times X_2 \times \dots \times X_n, T_1\times T_2 \times \dots \times T_n)\le \sum_{i=1}^n \mdim(X_i, T_i).
		$$
	\end{itemize}	
	
	Let $(X, d, T)$ be a topological dynamical system where $d$ is a metric compatible with the topology of $X$. Let $K\subset X$ and $\epsilon>0$. A subset $E$ of $X$ is said to be {\it $(n, \epsilon)$-separated}  if we have  $d_n(x,y)>\epsilon$ for any $x\neq y\in E$. Denote by $s_n(d, T, K, \epsilon)$  the largest cardinality of any $(n, \epsilon)$-separated subset of $K$. Define
	$$
	h_d(K, T, \epsilon)=\limsup_{n\to \infty} \frac{1}{n} \log s_n(d, T, K, \epsilon).
	$$
	We sometimes write $h_d(T, \epsilon)$ when $K=X$. The topological entropy is the limit of  $h_d(T, \epsilon)$ when $\epsilon$ goes to zero. 
	The {\it upper metric mean dimension} of the system $(\XX, d, T)$ is defined by
	$$
	\overline{\mdim}_M (X, d, T)=\limsup_{\epsilon\to 0} \frac{h_d(T, \epsilon)}{\log \frac{1}{\epsilon}}.
	$$
	Similarly, the {\it lower metric mean dimension} is defined by
	$$
	\underline{\mdim}_M (X, d, T)=\liminf_{\epsilon\to 0} \frac{h_d(T, \epsilon)}{\log \frac{1}{\epsilon}}.
	$$
	If the upper and lower metric mean dimensions coincide, then we call their common value the metric mean dimension of $(X, d, T)$ and denote it by ${\mdim}_M (X, d, T)$. Unlike the topological entropy, the metric mean dimension depends on the metric $d$. An important property of metric mean dimension is
	$$
	\mdim(X,T)\le \underline{\mdim}_M (X, d, T),
	$$
	for any metric $d$ \cite{LindenstraussWeiss2000MeanTopologicalDimension}.
	
	\section{Natural extension $(\widetilde{X_T}, \widetilde{T})$ and skew-product}

Let $(X_i)_{i\in \mathbb N}$ be a sequence of compact metric spaces and let $\mathbf{f}= (f_i)_{i\in \mathbb N}$ be a sequence of continuous maps   $f_i:X_{i+1}\rightarrow X_{i}$.  Then the inverse limit of $\mathbf{f}$ denoted by $\varprojlim \mathbf {f}$ is the set of sequences $(x_i)_{i\in \N}\in \prod_{i\in \N} X_i$ satisfying $f_i(x_{i+1})=x_i$ for all $i\in \mathbb N$. The maps $f_i$, $i\in \mathbb N$ are called the {\it bonding maps} of the inverse limit.

	Let $(X,T)$ be a topological dynamical system. The inverse limit $\varprojlim \mathbf {f}$ with $f_i=T$ and $X_i=\bigcap_{n\ge 0}T^n X$ for all $i\in \mathbb N$ is denoted by $\widetilde{X_T}$. Notice that $\widetilde{X_T}$ is a closed shift-invariant subspace of $X^\N$. The {\it natural extension} of $(X,T)$ is the dynamical system $(\widetilde{X_T}, \widetilde{T})$, where  $\widetilde{T}$ is the continuous maps on $\widetilde{X_T}$ which sends $(x_k)_{k\in \mathbb N}$ to $(Tx_0,x_0,x_1,\cdots )$. The map  $\widetilde{T}$ is an homeomorphism  of $\widetilde{X_T}$. Moreover the map $\pi:(x_k)_{k\in \N}\mapsto x_0$ semi-conjugates $\widetilde{T}$ and $T$, i.e. $\pi\circ \widetilde{T}=T\circ \pi $, and $\pi$ is surjective when $T$ is surjective. Moreover, if $T$ is a homeomorphism, then $(X,T)$ and $(\widetilde{X_T},\widetilde{T})$ are topologically conjugated. The invertible topological system $(\widetilde{X_T},\widetilde{T})$ satisfies the following universal property : for any extension $\psi:(Y,S)\rightarrow (X,T)$ by an invertible topological system $(Y,S)$ there is a unique extension 
	  $\widetilde{\psi}: (Y,S)\rightarrow (\widetilde{X_T}, \widetilde{T})$ satisfying $\psi=\pi\circ \widetilde{\psi}$, which is given by $\widetilde{\psi}(y)=\left(\psi(S^{-k}y)\right)_{k\in \N}$ for all $y\in Y$.

	\subsection{Dimension of $\widetilde{X_T}$}
		The inverse limits of topological spaces and  their  topological aspects have been highly investigated (see \cite{ingram} and the references therein). We are interested  in dimension properties of  $\widetilde{X_T}$. An inverse limit of compact metric spaces does not raise dimension  : with the above notations, the dimension of $\varprojlim\mathbf{f}$ is less than or equal to $\liminf_{i\to \infty}\dim (X_i)$  (Corollary 182 \cite{ingram} ). But in general the inverse limit may have lower dimension, even when one consider a single bonding  surjective map, i.e. the dimension of $\widetilde{X_T}$ may be less than the dimension of $X$ for a surjective dynamical topological system $(X,T)$.

		 We give two general constructions of surjective maps $T:X\circlearrowleft$ with    $\dim(\widetilde{X_T})<\dim X$. Moreover we may assume $X$ is connected for $\dim(\widetilde{X_T})>1$. The first one is inspired from Example 183 in \cite{ingram}, whereas the second construction seems to be new. Contrarily to the first one, the second one allows to build finite-to-one examples.  In particular we will show the following proposition.
		 
			\begin{prop}\label{examples}
		For any integers $0\leq k\leq n$, there exists a surjective finite-to-one topological system $(X,T)$ with  $ \dim \widetilde{X_T}=k$ and $\dim(X)=n$. For $k\geq 1 $ we may assume $X$ is connected.
\end{prop}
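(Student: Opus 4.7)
I would prove the proposition by an explicit construction, distinguishing the cases $k=n$, $1\le k<n$, and $k=0<n$. The case $k=n$ is trivial: take $X$ to be any connected compact metric space of dimension $n$ and $T=\Id$, so that $\widetilde{X_T}$ is homeomorphic to $X$ itself. For $n=0$ this also handles $k=0$, so the remaining work is to reduce the dimension in the natural extension while keeping $T$ finite-to-one and, when $k\ge 1$, $X$ connected.

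For $1\le k<n$ with connected $X$, the plan is to build $X$ as a skew-product extension of a base $k$-dimensional dynamical system. Fix a connected compact metric space $Y$ of dimension $k$ together with a surjective finite-to-one map $S:Y\to Y$ whose natural extension has dimension $k$; for instance, $Y=\T^k$ with $S(y)=2y$, whose natural extension is the $k$-dimensional solenoid. Let $Z$ be a connected compact metric space of dimension $n-k$ (e.g.\ $Z=[0,1]^{n-k}$) and set $X=Y\times Z$, so $X$ is connected and $\dim X=n$. Define $T(y,z)=(S(y),\sigma_y(z))$ where the fiber maps $\sigma_y:Z\to Z$ are surjective and finite-to-one, chosen so that for every backward $S$-orbit $(y_0,y_1,\dots)\in\widetilde{Y_S}$ the composed iterates $\sigma_{y_1}\circ\cdots\circ\sigma_{y_m}$ give a nested sequence of compact sets in $Z$ whose intersection is a single point. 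Under this hypothesis, the natural projection $\widetilde{X_T}\to\widetilde{Y_S}$ is a continuous bijection and hence, by compactness, a homeomorphism, yielding $\dim\widetilde{X_T}=\dim\widetilde{Y_S}=k$. Informally, each $\sigma_y$ should ``unfold'' one coordinate of $Z$ as a function of the current base point $y$, so that specifying the entire backward $y$-history pins down $z$ uniquely.

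For $k=0<n$, the space $X$ cannot be connected: otherwise $\widetilde{X_T}$ would be a connected compact metric space continuously surjecting onto $X$, and a connected zero-dimensional compact metric space is a single point, forcing $\dim X=0$. Accordingly, I would take $X$ to be a topological disjoint union of copies of a fixed $n$-dimensional compact piece, shrinking in diameter and accumulating on a Cantor-like subset $K\subset X$, with $T$ a finite-to-one surjection ``shifting'' the copy of diameter $2^{-m}$ onto the copy of diameter $2^{-m+1}$ and acting by a standard finite-to-one surjection on $K$. Every backward orbit then eventually enters $K$, so $\widetilde{X_T}$ embeds into $\widetilde{K_{T|_K}}$ and $\dim\widetilde{X_T}=0$.

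\textbf{Main obstacle.} The delicate point is producing the fiber maps $\sigma_y$ in the connected case: they must simultaneously be finite-to-one, be surjective onto $Z$, and collapse the inverse-limit fibers to single points along every backward $S$-orbit. Na\"ive topological contractions are not surjective; generic finite-to-one surjections of $Z$ tend to preserve $\dim Z$ in the inverse limit. The novelty of the second construction alluded to in the paper must be a careful interleaving of the base history with the fiber dynamics that kills one degree of freedom per backward step, and this is precisely what makes the example finite-to-one, in contrast to the Ingram-style first construction where one can afford maps that are infinite-to-one.
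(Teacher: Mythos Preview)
Your skew-product construction for the connected case $1\le k<n$ cannot work as stated. If each fiber map $\sigma_y:Z\to Z$ is surjective, then for any backward $S$-orbit $(y_0,y_1,\dots)\in\widetilde{Y_S}$ the fiber of $\widetilde{X_T}\to\widetilde{Y_S}$ over it is the inverse limit
\[
\varprojlim\Big(Z\xleftarrow{\ \sigma_{y_1}\ }Z\xleftarrow{\ \sigma_{y_2}\ }Z\xleftarrow{}\cdots\Big),
\]
and the projection of this inverse limit onto the $0$th coordinate is surjective onto $Z$ (each $\sigma_{y_i}$ being onto). Hence every fiber has dimension at least $\dim Z=n-k>0$, so the projection $\widetilde{X_T}\to\widetilde{Y_S}$ is never a bijection and $\dim\widetilde{X_T}\ge n-k$. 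The ``nested compact sets shrinking to a point'' picture you invoke is incompatible with surjectivity of the $\sigma_y$; this is not a technicality to be overcome but an obstruction. Your sketch for $k=0$ has the same defect: if $T$ maps the copy of diameter $2^{-m}$ onto the copy of diameter $2^{-m+1}$, then a backward orbit starting in the largest copy runs through all the copies without ever entering $K$, and the $0$th-coordinate projection of such orbits covers an $n$-cube.

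The paper avoids this by abandoning the product structure altogether. The key lemma is a decomposition $X=Y\cup Z$ with $Y,Z$ closed and $T(Z)\subset Z$, which gives
\[
\dim\widetilde{X_T}\le\max\Big(\dim Y,\ \dim\bigcap_{m}T^m Z\Big).
\]
One then takes $Z=[0,1]^n$ with $T|_Z$ contracting the last $n-k$ coordinates (so $\bigcap_mT^mZ=[0,1]^k\times\{0\}$), and a \emph{low-dimensional} piece $Y$ (a Cantor set, or for connected $X$ an arc glued to $Z$) on which $T$ is a finite-to-one surjection onto $Y\cup Z$. Surjectivity of $T$ is manufactured by $Y$, while on $Z$ the map is deliberately \emph{not} surjective. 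Backward orbits either remain in $Z$ forever (and are trapped in the $k$-dimensional core) or eventually land in $Y$ (and contribute at most $\dim Y\le 1$). This separation of roles---a collapsing high-dimensional piece plus a low-dimensional ``feeder''---is exactly what is missing from your proposal.
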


			We recall that the inverse limit of continua (i.e. compact connected space) is also a continuum. In particular when $\widetilde{X_T}$ is not reduced to a point, its topological dimension is positive. Therefore in the above proposition, the set of states  $X$ can not be connected for $k=0$. 
	
	\subsubsection{Factor of a lower dimensional space}

\begin{lem}\label{ingramgen}
Let  $(X,T)$ be a topological dynamical system. Assume there are  subsets $Y,Z$ of $X$, such that 
		\begin{itemize}
		\item $X=Y\cup Z$,
		\item $T(Z)=Y$ and $T(Y)=Z$,
		\item $Y$ and $Z$ are  closed subsets of $X$.
		\end{itemize}
		Then we have 
		$$\dim(\widetilde{X_T})\leq \min (\dim Y, \dim Z).$$
\end{lem}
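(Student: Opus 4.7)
The plan is to reduce to the squared system $(X,T^2)$. The key observation is that $T(Y)=Z$ and $T(Z)=Y$ imply $T^2(Y)=Y$ and $T^2(Z)=Z$, so $T^2$ preserves both $Y$ and $Z$. I would first establish the decomposition $\widetilde{X_{T^2}} = \widetilde{Y_{T^2}} \cup \widetilde{Z_{T^2}}$, where the two natural extensions on the right are formed using the restrictions of $T^2$ to $Y$ and $Z$ respectively. Indeed, given $(y_n)\in \widetilde{X_{T^2}}$, if some $y_j \in Y \setminus Z$ then $T^2(Z)=Z$ forces $y_{j+1}\in Y$ (otherwise $y_j = T^2(y_{j+1})\in Z$), and inductively $y_n \in Y$ for $n \geq j$; conversely $T^2(Y) = Y$ yields $y_n \in Y$ for $n \leq j$. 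The argument is symmetric when some $y_j \in Z \setminus Y$; otherwise every $y_n$ lies in $Y \cap Z$, hence in both natural extensions.

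Next, two continuous injections $\Phi,\Psi:\widetilde{X_T}\to \widetilde{X_{T^2}}$ play a central role: $\Phi((x_k))=(x_{2k})_k$ and $\Psi((x_k))=(x_{2k+1})_k$, each injective because the omitted sub-sequence is recovered by a single application of $T$ (e.g.\ $x_{2k+1} = T(x_{2k+2})$). Combined with the decomposition of $\widetilde{X_{T^2}}$, this splits $\widetilde{X_T}$ into four closed subsets according to whether $\Phi((x_k))$ and $\Psi((x_k))$ each belong to $\widetilde{Y_{T^2}}$ or $\widetilde{Z_{T^2}}$. On a \emph{mixed} piece where, say, $\Phi \in \widetilde{Y_{T^2}}$ and $\Psi \in \widetilde{Z_{T^2}}$, the injection $\Phi$ realises it as a closed subspace of $\widetilde{Y_{T^2}}$, of dimension at most $\dim\widetilde{Y_{T^2}}\leq \dim Y$, and $\Psi$ similarly gives $\leq \dim Z$; so this piece has dimension at most $\min(\dim Y,\dim Z)$. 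On a \emph{pure} piece where both $\Phi$ and $\Psi$ land in, say, $\widetilde{Y_{T^2}}$, every coordinate $x_k$ lies in $Y$, and then $T(x_{k+1})=x_k$ together with $T(Y)=Z$ forces $x_k \in Y\cap Z$ for every $k$; this piece embeds in the inverse limit of $(Y\cap Z)$ with bonding map $T|_{Y\cap Z}$, of dimension at most $\dim(Y\cap Z)\leq \min(\dim Y,\dim Z)$.

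Summing the four pieces by the finite sum theorem for the topological dimension of closed subsets of a compact metric space then yields $\dim \widetilde{X_T} \leq \min(\dim Y, \dim Z)$. The main obstacle is carrying out the first step when $Y\cap Z \neq \emptyset$: a priori an orbit of $T^2$ in $\widetilde{X_{T^2}}$ could switch back and forth between the two sets, and the proof relies on the clean dichotomy that once the orbit visits $Y \setminus Z$ (resp.\ $Z \setminus Y$) the $T^2$-invariance of $Y$ and $Z$ pins it in $Y$ (resp.\ $Z$) for every index, forward and backward.
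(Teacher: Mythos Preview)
Your proof is correct. The paper's argument is more direct: it writes down two inverse systems $\mathbf{f}$ and $\mathbf{f'}$ whose spaces alternate between $Y$ and $X$ (with bonding map $T$), checks that $\widetilde{X_T}\subset \varprojlim\mathbf{f}\cup\varprojlim\mathbf{f'}$, and then applies the standard bound $\dim(\varprojlim)\leq \liminf_i \dim X_i$ to get $\dim\widetilde{X_T}\leq \dim Y$ (and by the symmetric construction, $\leq \dim Z$). Your detour through $T^2$ encodes the same parity observation, but with some redundancy: once $\Phi((x_k))\in\widetilde{Y_{T^2}}$ (i.e.\ every $x_{2k}\in Y$), the relation $x_{2k+1}=T(x_{2k+2})\in T(Y)=Z$ already forces $\Psi((x_k))\in\widetilde{Z_{T^2}}$, so your ``pure'' pieces are contained in the mixed ones and the four-piece decomposition collapses to two --- precisely the two inverse limits the paper writes down. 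Your route has the advantage of making the embedding of each piece into a concrete inverse limit explicit via $\Phi$ and $\Psi$; the paper's route is shorter because it never leaves the inverse-limit formalism and avoids the separate $Y\cap Z$ case.
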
	
\begin{proof}
We define a sequence of topological spaces $X_i$ and maps $f_i: X_{i+1}\to X_i$ as follows.  Let $X_i=Y$ for $i$ even and $X_i=X$ for $i$ odd and take $f_i$ be the restriction of $T$ to $X_{i+1}$. Let $\varprojlim  \mathbf{f}$ be the induced inverse limit. We let ${\bf f'}$ be the family obtained by inverting the role of odd and even numbers. Then $\widetilde{X_T}$ is contained in the union of $\varprojlim  \mathbf{f}$  and $\varprojlim  \mathbf{f'}$. By \cite[Corcollary 182]{ingram}, the dimensions of $\varprojlim  \mathbf{f}$ and $\varprojlim  \mathbf{f'}$ are less than or equal to the dimensions of $Y$ and $Z$. This completes the proof.
\end{proof}	

In the settings of Lemma \ref{ingramgen} we have $\dim(X)=\max(\dim Y,\dim Z)$. Therefore to get an example with $\dim(\widetilde{X_T})<\dim X$ it is enough to take 
surjective continuous  maps $R:Y\rightarrow Z$ and $S:Z\rightarrow Y$ with compact metric spaces $Y,Z$ satisfying $\dim Y<\dim Z$ and consider $T$  on the disjoint union $X=Y\coprod Z$ with $T|_{Y}=R$ and $T|_{Z}=S$. It is not difficult to produce such examples satisfying Proposition \ref{examples}, but the space $X$ is not connected (because it is a disjoint union of non empty closed sets). This problem  may be avoided thanks to the following corollary.

\begin{cor}[Example 183 in \cite {ingram}]\label{ingram} Let $(X,T)$ be a topological system. Assume there is a compact metric space $Y$ and surjective maps $R:X\rightarrow Y$ and $S:Y\rightarrow X$ with $T=S\circ R$. Then we have $$\dim(\widetilde{X_T})\leq \dim Y.$$
\end{cor}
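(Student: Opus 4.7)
The plan is to reduce Corollary \ref{ingram} to Lemma \ref{ingramgen} by constructing an auxiliary dynamical system on the disjoint union $\tilde X := X \sqcup Y$. Define the continuous map $\tilde T : \tilde X \to \tilde X$ by $\tilde T|_X = R$ and $\tilde T|_Y = S$. Since $R$ and $S$ are surjective, $\tilde T(X) = Y$ and $\tilde T(Y) = X$, and in particular $\tilde T$ is surjective. The two pieces $X$ and $Y$ are closed in $\tilde X$, cover $\tilde X$, and are swapped by $\tilde T$. Hence Lemma \ref{ingramgen} applies and gives
\[
\dim(\widetilde{\tilde X_{\tilde T}}) \leq \min(\dim X,\dim Y) \leq \dim Y.
\]

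The second step is to identify $\widetilde{X_T}$ with a closed subset of $\widetilde{\tilde X_{\tilde T}}$. Note that $T = S\circ R$ is surjective, so $\bigcap_{n\geq 0} T^n X = X$. Given a point $(x_k)_{k\in\N} \in \widetilde{X_T}$, consider the sequence $(\tilde x_k)_{k\in\N} \in \tilde X^{\N}$ defined by $\tilde x_{2k} := x_k \in X$ and $\tilde x_{2k+1} := R(x_{k+1}) \in Y$. Using $\tilde T|_X = R$ and $\tilde T|_Y = S$, together with $T = S\circ R$, one checks that $\tilde T(\tilde x_{k+1}) = \tilde x_k$ for all $k$, so $(\tilde x_k) \in \widetilde{\tilde X_{\tilde T}}$. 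Conversely, any element of $\widetilde{\tilde X_{\tilde T}}$ starting in $X$ must alternate between $X$ and $Y$ and, through the even subsequence, yields a backward $T$-orbit in $X$. This produces a homeomorphism between $\widetilde{X_T}$ and the closed subset $\{(\tilde x_k) \in \widetilde{\tilde X_{\tilde T}} : \tilde x_0 \in X\}$.

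Since topological dimension does not increase when passing to a closed subspace, we conclude
\[
\dim(\widetilde{X_T}) \leq \dim(\widetilde{\tilde X_{\tilde T}}) \leq \dim Y,
\]
as desired.

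The main obstacle is really just bookkeeping: one has to verify that the bonding/shifting conventions match up so that the natural extension of the alternating system $\tilde T$ genuinely restricts to the natural extension of $T$ on the even indices, and that the implicit $\bigcap_n T^n$ condition is trivialised by the surjectivity of $R$ and $S$. No new topological input beyond Lemma \ref{ingramgen} is needed.
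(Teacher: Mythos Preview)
Your proof is correct and follows essentially the same strategy as the paper: both construct the auxiliary system $T'=\tilde T$ on the disjoint union $X\sqcup Y$ and then apply Lemma~\ref{ingramgen}. The only difference is in the bridging step: the paper observes that $(X,T)$ is the restriction of $T'^2$ to $X$ and then invokes the subsequence theorem from \cite{ingram} to get $\dim(\widetilde{X_{T'^2}})=\dim(\widetilde{X_{T'}})$, whereas you bypass that citation by writing down the explicit embedding $(x_k)\mapsto(\tilde x_k)$ of $\widetilde{X_T}$ into $\widetilde{\tilde X_{\tilde T}}$ via the alternating sequence, which is a slightly more self-contained route to the same inequality.
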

\begin{proof}
Consider the surjective map $T'$ on the disjoint union $X\coprod Y$ with $T'|_X=R$ and $T'|_Y=S$. Then $(X,T )$ is the restriction of $T'^2$ on $X$, therefore we get 
$\dim (\widetilde{X_T}) \leq \dim (\widetilde{X_{T'^2}})$. By the subsequence theorem \cite{ingram}, we have $\dim (\widetilde{X_{T'^2}})=\dim(\widetilde{X_{T'}})$, so that we finally get by Lemma \ref{ingramgen} that 
$\dim (\widetilde{X_T})\leq \dim(\widetilde{X_{T'}}) \leq \dim Y$.
\end{proof}

In  \cite[Example 183]{ingram} the authors consider more precisely $X=[0,1]^n$ and $Y=[0,1]\times \{0\}$ with $S$ being the projection on the first coordinate and $R$ be a space-filling curve, then $\dim \widetilde{X_T}=1<\dim X=n$. In general, we may generalize the above example to general $k\ge1$ by replacing $X$ and $Y$ by $[0,1]^{n+k-1}$ and $[0,1]^k\times \{0\}$ respectively. 


We remark that the examples  produced as above are never finite-to-one. Indeed, in the settings of Lemma \ref{ingramgen}  for example, we have $\dim Z\leq\dim Y+\sup_{z\in Z}\dim   T^{-1}z$ (see for example \cite[Theorem 1.12.4]{engelking1995theory}). Then if $T$ is finite-to-one, its  fibers  are zero-dimensional and  we have necessarily  $\dim Z\leq\dim Y$ and then $\dim Y=\dim Z $ by symmetry.

\subsubsection{Examples with a lower dimensional subsystem}\label{sec:Examples with a lower dimensional subsystem}

	\begin{lem}\label{prop: dimension of natural ext}
		Let  $(X,T)$ be a topological system. Assume there are subsets $Y,Z$ of $X$, such that 
		\begin{itemize}
		\item $X=Y\cup Z$,
		\item $T(Z)\subset Z$,
		\item $Y$ and $Z$ are  closed subsets of $X$.
		\end{itemize}
		Then we have  $$\dim(\widetilde{X_T})\leq \max\left(\dim Y, \dim\left( \bigcap_{n\in \mathbb N}T^nZ \right) \right).$$
		
	\end{lem}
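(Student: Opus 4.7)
The plan is to decompose $\widetilde{X_T}$ as a countable union of closed subsets and apply the countable sum theorem for topological dimension. The driving observation is that, thanks to the forward invariance $T(Z)\subset Z$, once a sequence in $\widetilde{X_T}$ leaves $Z$ it cannot return: if $(x_j)\in \widetilde{X_T}$ and $x_{j_0}\notin Z$, then $x_{j_0+1}\notin Z$ as well, since otherwise $x_{j_0}=T(x_{j_0+1})\in T(Z)\subset Z$. By induction $x_j\notin Z$, hence $x_j\in Y$, for every $j\geq j_0$.

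Define the closed subsets
$$
B=\{(x_j)\in \widetilde{X_T}:x_j\in Z \text{ for all } j\in\N\},\qquad D_k=\{(x_j)\in \widetilde{X_T}:x_j\in Y \text{ for all } j\geq k\}.
$$
The observation above yields $\widetilde{X_T}=B\cup\bigcup_{k\in\N}D_k$. I then bound each piece using Corollary~182 of \cite{ingram}. For $B$, every coordinate satisfies $x_j=T^n(x_{j+n})\in T^n Z$ for every $n$, so $x_j\in Z_\infty:=\bigcap_n T^n Z$; since $T(Z_\infty)\subset Z_\infty$, the set $B$ is exactly the inverse limit of the constant system $(Z_\infty,T|_{Z_\infty})$, and therefore $\dim(B)\leq \dim(Z_\infty)$. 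For $D_k$, the coordinates $x_0,\ldots,x_{k-1}$ are determined by $x_k$ via $x_j=T^{k-j}(x_k)$, so the tail shift $(x_j)_{j\geq 0}\mapsto(x_{j+k})_{j\geq 0}$ is a continuous injection, hence a homeomorphism onto its image $\{(y_j)\in Y^\N:T(y_{j+1})=y_j\}$. This image is the inverse limit of the spaces $Q_n=\{(y_0,\ldots,y_{n-1})\in Y^n:T(y_{j+1})=y_j\}$, each of which is homeomorphic, via projection onto the last coordinate, to a closed subset of $Y$; thus $\dim(Q_n)\leq\dim Y$ and $\dim(D_k)\leq\dim Y$.

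Applying the countable sum theorem for topological dimension in separable metric spaces to the decomposition $\widetilde{X_T}=B\cup\bigcup_k D_k$ finally gives
$$
\dim(\widetilde{X_T})\leq\max\bigl(\dim(B),\,\sup_{k}\dim(D_k)\bigr)\leq\max\bigl(\dim Y,\,\dim(Z_\infty)\bigr).
$$
The only technical points to verify are the identification of $B$ with a genuine inverse limit (which reduces to $T$-invariance of $Z_\infty$, following from $T(T^nZ)=T^{n+1}Z\subset T^nZ$) and the invocation of the countable sum theorem on a compact metrizable space; the forward-invariance dichotomy $x_j\in Z$ for all $j$ versus $x_j\in Y$ for all $j\geq k$ is the genuinely new ingredient compared to Lemma~\ref{ingramgen}.
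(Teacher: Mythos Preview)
Your proof is correct and follows essentially the same route as the paper's: the same decomposition $\widetilde{X_T}=\widetilde{Z_T}\cup\bigcup_k \widetilde{Y_k}$ (your $B$ and $D_k$), the same appeal to Corollary~182 of \cite{ingram} for each piece, and the countable sum theorem to conclude. You supply more detail than the paper does---in particular the explicit justification of the decomposition via forward invariance and the tail-shift argument bounding $\dim(D_k)$---but the underlying strategy is identical.
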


\begin{proof}
For each $k\in \mathbb N$ we let $\widetilde{Y_k}$ be the subset of $\widetilde{X_T}$ consisting in $(x_l)_{l\in \mathbb N}$ with $x_l\in Y$ for $l\geq k$.     Then we have 
$$\widetilde{X_T}=\widetilde{Z_T}\cup \bigcup_{k\in \mathbb N}\widetilde{Y_k}.$$
The sets $\widetilde{Y_k}$, $k\in \mathbb N$, are compact subsets of $X$ and the inverse limit $\widetilde{Z_T}$ has topological dimension less than or equal to $\dim \left(\bigcap_{n\in \mathbb N}T^nZ\right)$. Moreover we have $\dim(\widetilde{Y_k})\leq \dim Y$ for any $k$. By the countable union theorem (e.g. Theorem 1.7.1 in \cite{coornaert2015topological}), we conclude that  \begin{align*}
\dim(\widetilde{X_f})&\leq \max(\dim Y, \dim \widetilde{Z_T}),\\
&\leq \max(\dim Y, \dim \bigcap_{n\in \mathbb N}T^nZ ).
\end{align*}
\end{proof}

We are now in a position to prove Proposition \ref{examples}. 
\begin{proof}[Proof of Proposition \ref{examples}]
	
		Let $n>k\in \mathbb N$. We choose $Y$ equal to the standard Cantor set $C\subset [0,1]$ and $Z$ be the unit Euclidean cube $[0,1]^n$. Write $Y$ as the union of $Y_1:=C\cap [0,1/3]$  and $Y_2=C\cap [2/3,1]$, which are both homeomorphic to $C$. There is a finite-to-one continuous surjective map $\phi:Y_2\simeq C \rightarrow Z$. For example, for $n=1$, we may take $\phi : \{0,1\}^{\mathbb N^*}\rightarrow [0,1]$, $(a_k)\mapsto \sum_{k\ge 1} \frac{a_k}{2^k}$.  Then we  let  $T:Y\rightarrow Y\coprod Z$ be equal to $x\mapsto 3x$ on $Y_1$ and $T=\phi$ on $C_2$. Then we have $T(Y)=Y\coprod Z$. On $Z$ we let $T(y_1,\cdots,y_n)=(y_1,\cdots, y_k,y_{k+1}/2,\cdots, y_{n}/2 )$, therefore $\bigcap_{n\ge 0}T^nZ=[0,1]^k\times \{0^{n-k}\}$. Let $X=Y\coprod Z$. By Proposition \ref{prop: dimension of natural ext}, we get $\dim (\widetilde{X_T})\leq k$. Finally as $T$ is the identity map on $[0,1]^k\times \{0^{n-k}\}$, this cube embeds in $\widetilde{X_T}$ and $\dim (\widetilde{X_T})\geq k$.
The space $X$ is not connected but for $k>0$ we may arrange the construction to ensure the connectedness of $X$. Take $Y=[-1,0]\times  \{0^{n-1}\}$ and let $T:[-1,-1/2]\times  \{0^{n-1}\}\rightarrow Z=[0,1]^n$ be a finite-to-one space filling curve (e.g. the Hilbert space filling curve) with $T(-1/2,0)=0^n$. On $[-1/2,0]\times \{0^{n-1}\}$ we let $T(x)=(F_2(x_1),0^{n-1})$,where $F_2$ denotes the continuous "tent"  map affine on $[-1/2,-1/4]$ and $[-1/4,0]$ with $F_2(-1/2)=F_2(0)=0$ and $F_2(-1/4)=-1$. By taking  $T$ on $Z$ as above, we get the desired example.  

\end{proof}
	


	\subsection{Mean dimension of $\widetilde{T}$}
	Let $(X,T)$ be a topological dynamical system and $(\widetilde{X_T}, \widetilde{T})$ be its natural extension.
	We study now the relation between the mean dimension of $\widetilde{T}$ and $T$. In this section, we show that the mean dimension of the natural extension is always less than or equal to the mean dimension of the system, but they may differ. 
	\subsubsection{Inequality}
	
Given a sequence $(X_i,T_i)_{i\in \mathbb N}$ of topological systems and a family of continuous maps $\mathbf{f}=(f_i:X_{i+1}\rightarrow X_{i})_{i\in \mathbb N}$, 	we may define the inverse limit system as the map $\varprojlim \mathbf{T}:\varprojlim \mathbf{f}\circlearrowleft$ which maps $(x_i)_{i\in \mathbb N}$ to $(T_ix_i)_{i\in \mathbb N}$. The second author proved $\mdim(\varprojlim \mathbf{f}, \varprojlim \mathbf{T})\leq \liminf_{i\to \infty}\mdim(X_i, T_i)$ in   \cite[Proposition 5.8]{shi2021marker} \footnote{Even though it is stated in \cite[Proposition 5.8]{shi2021marker} that $\mdim(\varprojlim \mathbf{f}, \varprojlim \mathbf{T})\leq \sup_{i\in \N}\mdim(X_i, T_i)$, it is indeed shown that $\mdim(\varprojlim \mathbf{f}, \varprojlim \mathbf{T})\leq \liminf_{i\to \infty}\mdim(X_i, T_i)$ by carefully checking its proof.}. 
For a topological system $(X,T)$, we consider $(X_i,T_i)=(X,T)$ and $f_i=T$ for all $i$. Then $(\varprojlim \mathbf{f}, \varprojlim \mathbf{T})$ is just the natural extension $(\widetilde{X_T},\widetilde T)$ of $(X,T)$. As a consequence we have in particular :

\begin{prop}\label{nat}
	Let $(X,T)$ be a topological dynamical system and $(\widetilde{X_T}, \widetilde{T})$ be its natural extension. Then we have 
\begin{equation}\label{eq:nat}
\mdim(X,T)\geq \mdim(\widetilde{X_T}, \widetilde{T}).
\end{equation}
\end{prop}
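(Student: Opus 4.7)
The plan is to deduce this as an immediate corollary of Shi's inverse-limit inequality recalled in the paragraph preceding the statement: for any sequence $(X_i, T_i)$ of topological systems with bonding maps $\mathbf{f} = (f_i)$, one has $\mdim(\varprojlim \mathbf{f}, \varprojlim \mathbf{T}) \leq \liminf_{i \to \infty} \mdim(X_i, T_i)$. The key step is to apply this to the constant sequence $(X_i, T_i) = (X, T)$ with bonding maps $f_i = T$, and then identify the resulting inverse-limit system with the natural extension $(\widetilde{X_T}, \widetilde{T})$. The identification is essentially tautological: any sequence $(x_i)_{i \in \N} \in X^\N$ satisfying $T x_{i+1} = x_i$ automatically obeys $x_i = T^n x_{n+i} \in \bigcap_{n \geq 0} T^n X$ for every $i, n$, so the coordinates are forced into the eventual image used in the paper's definition of $\widetilde{X_T}$, and the inverse-limit dynamics $(x_i) \mapsto (T x_i)$ agrees with $\widetilde{T}$ under this identification.

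Once this identification is in place, the cited inequality specializes to $\mdim(\widetilde{X_T}, \widetilde{T}) \leq \liminf_{i \to \infty} \mdim(X, T) = \mdim(X, T)$, which is precisely \eqref{eq:nat}. There is no genuine obstacle at the level of this proposition itself; the substantive work is hidden in the inverse-limit bound, whose proof (not reproduced here) proceeds by approximating any finite open cover of $\varprojlim \mathbf{f}$ by the pullback of a cover on some finite-stage factor $X_i$ via compactness, and then exploiting subadditivity of $D\bigl(\bigvee_{j=0}^{n-1} T^{-j}(\cdot)\bigr)$ together with the footnoted sharpening of \cite[Proposition 5.8]{shi2021marker} from $\sup_i$ to $\liminf_i$. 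If one had to rewrite that argument, the only delicate point would be ensuring that the pullback of a cover on $X_i$ genuinely refines the prescribed cover on $\varprojlim \mathbf{f}$ for all sufficiently large $i$, which is a Lebesgue-number argument in the product topology; for the present proposition the constant-sequence case makes even that point automatic, since the same cover on $X$ can be pulled back at every stage.
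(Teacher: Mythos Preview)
Your proposal is correct and follows exactly the paper's own approach: the paragraph preceding the proposition already deduces it by specializing Shi's inverse-limit inequality to the constant sequence $(X_i,T_i)=(X,T)$ with bonding maps $f_i=T$, and the proposition is stated as an immediate consequence. Your added remark that any sequence $(x_i)$ with $Tx_{i+1}=x_i$ automatically lands in $\bigcap_{n\ge 0}T^nX$ is a useful clarification of why this constant inverse limit coincides with the paper's definition of $\widetilde{X_T}$.
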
	


		
We remark that the equality of \eqref{eq:nat} can be achieved: for example, the mean dimension of a unilateral full-shift is equal to the mean dimension of its natural extension, which is the corresponding bilateral shift. 

\begin{ques}
Does the mean dimension of a general CA coincide with the mean dimension of its natural extension?
\end{ques}

In some cases we answer positively to the above question in the next sections. 
		
	\subsubsection{A counterexample  $(X_C, T_C)$ to the equality of \eqref{eq:nat}}
	
The inequality in Proposition  \ref{nat} may be strict. We present below an example. 

Let $f:[0,1]\rightarrow \mathbb R$ be the $\times 3$-map., i.e. $x\mapsto 3x$ mod $1$. For each $n\in \mathbb N$ we let $C_n$ be the $n^{th}$ standard Cantor 
set, i.e. $C_n:=\bigcap_{0\leq l\leq n}f^{-l}([0,1/3]\cup [2/3,1])$. Observe that $f_n=f|_{C_n}:C_{n+1}\rightarrow C_n$ is surjective and $\bigcap_{n\in \N} C_n$ is the standard Cantor set $C$.  We consider the compact metrizable space $X_C=\prod_{n\in \mathbb N}C_n$ and the surjective map $T_C:X_C\circlearrowleft$ defined by 
$$\forall x=(x_n)_{n\in \N}\in X_C, \ T_C(x)=(3x_{n+1} )_{n\in \N}.$$

\begin{prop}\label{prop:example natural}
	We have
$$\mdim(X_C, T_C)\geq 1>\mdim(\widetilde{X_{T_C}},\widetilde{T_C})=0$$
\end{prop}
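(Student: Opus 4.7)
The plan is to handle the two inequalities separately.

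For the lower bound $\mdim(X_C, T_C) \geq 1$ the strategy is to exhibit a closed $T_C$-invariant subsystem conjugate to a full shift on a one-dimensional alphabet. A natural candidate is $Y := \prod_{n \in \mathbb{N}} [0, 3^{-n-1}] \subset X_C$: the inclusion $[0, 3^{-n-1}] \subset C_n$ holds because for every $l \leq n$ the map $f^l$ is affine on $[0, 3^{-n-1}]$ (given by $x \mapsto 3^l x$) and takes values in $[0, 1/3] \subset C_0$. On $Y$ no reduction modulo $1$ is needed, so $T_C|_Y$ is just the shift composed with multiplication by $3$ coordinate-wise, and the rescaling $\Phi(x) := (3^n x_n)_{n \in \mathbb{N}}$ is a topological conjugacy $(Y, T_C|_Y) \cong ([0, 1/3]^{\mathbb{N}}, \sigma)$. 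Since the mean dimension of a one-sided full shift equals the dimension of its alphabet, $\mdim(Y, T_C|_Y) = 1$ and subsystem monotonicity yields $\mdim(X_C, T_C) \geq 1$.

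For the vanishing $\mdim(\widetilde{X_{T_C}}, \widetilde{T_C}) = 0$ the plan is to appeal to the inequality $\mdim \leq \underline{\mdim}_M$ and control the metric mean dimension in the standard product metric $\tilde d$ on $\widetilde{X_{T_C}} \subset X_C^{\mathbb{N}}$. A point $(y^{(k)})_k \in \widetilde{X_{T_C}}$ is recorded by entries $y^{(k)}_n \in C_n$ for $k, n \geq 0$ satisfying the compatibility $f(y^{(k+1)}_{n+1}) = y^{(k)}_n$, which couples the data only along the diagonals $k - n = \mathrm{const}$; in the metric $\tilde d$ the entry $y^{(k)}_n$ is weighted by $2^{-k-n}$. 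The key observation is that at a fixed scale $\epsilon$ only those positions with $k + n \lesssim \log_2(1/\epsilon)$ are visible in the Bowen metric $\tilde d_N$, \emph{independently of $N$}. From this one aims to deduce that the number of $(N, \epsilon)$-separated points grows sub-exponentially in $N$ for every fixed $\epsilon$, giving $h_{\tilde d}(\widetilde{T_C}, \epsilon) = O(\log(1/\epsilon))$ and hence $\underline{\mdim}_M = 0$.

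The main obstacle is to make this heuristic rigorous in the presence of obvious sources of continuous data. Along each diagonal the inverse limit $\varprojlim(C_{d+j}, f)_{j}$ is homeomorphic to $C_d \times \{0, 1\}^{\mathbb{N}}$, and moreover the projection $(y^{(k)})_k \mapsto (y^{(k)}_0)_k$ furnishes a factor map from $\widetilde{X_{T_C}}$ onto the full shift $(C_0^{\mathbb{N}}, \sigma)$, whose mean dimension is $1$. One must therefore argue that this apparent positive-dimensional data is in fact Bowen-collapsed: the coordinate $y^{(m)}_0$ is visible only with weight $2^{-m}$, so only those with $m \lesssim \log_2(1/\epsilon)$ contribute to the $\epsilon$-dimension, and the contracting weights $2^{-k}$ along the inverse-limit direction must be shown to beat the expansion of $f^k$ along backward orbits. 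Reconciling the claimed vanishing with the existence of the factor $(C_0^{\mathbb{N}}, \sigma)$ of mean dimension $1$ is possible only because factor maps need not preserve the order of mean dimension (as already exemplified by Proposition~\ref{nat}), and turning this into a quantitative sub-exponential covering estimate is the heart of the proof.
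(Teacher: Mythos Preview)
Your argument for $\mdim(X_C,T_C)\geq 1$ is correct and is exactly the paper's proof: the set $\prod_n[0,3^{-n-1}]$ is the product of the connected components $I_n$ of $C_n$ containing $0$, and the rescaling $(x_n)\mapsto(3^nx_n)$ conjugates the restriction of $T_C$ to the full one-sided shift over an interval.

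Your plan for $\mdim(\widetilde{X_{T_C}},\widetilde{T_C})=0$, however, cannot succeed as stated. Two concrete problems:
\begin{itemize}
\item The claim that ``the number of $(N,\epsilon)$-separated points grows sub-exponentially in $N$ for every fixed $\epsilon$'' asserts that $h_{\tilde d}(\widetilde{T_C},\epsilon)=0$ for all $\epsilon$, i.e.\ zero topological entropy. This is false: the system turns out to be conjugate to the bilateral shift on $C^{\Z}$ (see below), which has infinite entropy. The subsequent ``$h_{\tilde d}(\widetilde{T_C},\epsilon)=O(\log(1/\epsilon))$'' is inconsistent with the preceding sentence and would in any case only give a finite, not a vanishing, bound on $\underline{\mdim}_M$.
\item More decisively, the metric mean dimension with respect to your ``standard product metric'' $\tilde d$ is \emph{not} zero. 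The paper proves (in the lemma immediately following this proposition) that for any product metric $d^{\N}$ on the natural extension one has $\mdim_M(\widetilde{X_T},\widetilde{T},d^{\N})=\mdim_M(X,T,d)$. Since $(X_C,T_C)$ contains the full shift $(Y,T_C|_Y)$ of metric mean dimension $1$, one gets $\mdim_M(\widetilde{X_{T_C}},\widetilde{T_C},\tilde d)\geq 1$. So bounding $\mdim$ by $\underline{\mdim}_M$ in this particular metric is hopeless.
\end{itemize}

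The paper's route is entirely different and avoids metric mean dimension altogether: it exhibits an explicit topological conjugacy between $(\widetilde{X_{T_C}},\widetilde{T_C})$ and the bilateral shift $(C^{\Z},\sigma)$, where $C$ is the standard Cantor set identified with $\varprojlim(C_n,f_n)$. Concretely, writing $x=(x_n^k)_{k,n}\in\widetilde{X_{T_C}}$ and extending to negative $k$ via $x^k=T_C^{-k}x^0$, the map $\phi(x)=(y^k)_{k\in\Z}$ with $y^k_n=x_n^{n-k}$ is a conjugacy. Since $C$ is zero-dimensional, $C^{\Z}$ is zero-dimensional and hence $\mdim(C^{\Z},\sigma)=0$. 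The diagonal reindexing you noticed (``couples the data only along the diagonals $k-n=\mathrm{const}$'') is precisely the key to this conjugacy; the missing idea in your proposal is to follow this observation to its structural conclusion rather than attempting a covering estimate.
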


\begin{proof}
1. We first show $\mdim(X_C, T_C)\geq 1$. For each $n\in \N$, we let $I_n$ be the connected component of $C_n$ containing $0$. Then $f|_{I_{n+1}}:I_{n+1}\rightarrow I_n$ is an homeomorphism. The product $Y=\prod_{n\in \N}I_n\subset X_C$ satisfies $T_C(Y)\subset Y$ and the induced subsystem $(Y, (T_C)_{|Y})$ of $(X_C, T_C)$ is topologically conjugated to the full shift $([0,1]^{\mathbb Z }, \sigma)$ via the conjugacy 
\begin{align*}
\phi:&Y\rightarrow [0,1]^{\mathbb N }, \\
& (x_n)_{n\in \N}\mapsto (3^nx_n)_{n\in \N}.
\end{align*}
 Therefore $\mdim(X_C, T_C)\geq \mdim([0,1]^{\mathbb N }, \sigma)=1$.
\vspace{0,3cm}

2. We check now that $\mdim(\widetilde{X_{T_C}},\widetilde{T_C})=0$. In fact we will show that $(\widetilde{X_{T_C}},\widetilde{T_C})$ is topologically conjugated to $(C^{\mathbb Z}, \sigma)$. An element $x$ of $\widetilde{X_{T_C}}$ may be written under the form 
$x=(x_n^k)_{\stackrel{n\in \mathbb N}{k\in \mathbb N}}$ with $x^k=(x_n^k)_{n\in \N} \in X_C=\prod_{n\in \N}C_n$ and $x=(x^k)_{k\in \N}\in \widetilde{X_{T_C}}$.
Moreover the Cantor set is the inverse limit of the family $\mathbf{f}=( f_n)_{n\in \mathbb N}$  (recall  $f_n: C_{n+1}\rightarrow C_{n}$  is the  the $\times 3$-map). We will  use this identification $C=\varprojlim \mathbf f$. 
For $x\in \widetilde{X_{T_C}}$ we let $x^k= T_C^{-k}(x^0)$ for $k<0$ so that we have $T_C(x^{k+1})=x^{k}$ for all $k\in \Z$, i.e.  $3x^{k+1}_{n+1}=x^{k}_n$ for all $k\in\Z$ and $n\in \N$.

We consider the map $\phi:\widetilde{X_{T_C}}\rightarrow C^{\mathbb Z}$ defined by 
\begin{align*}\forall x=(x_n^k)_{k,n}\in \widetilde{X_{T_C}}, ~& \phi(x)=(y^k)_{k\in \Z}\\
\text{ with } y^k &=(x_{n}^{n-k})_{n\in \mathbb N}\in C.
\end{align*}
This map takes value in $ C^{\mathbb Z}$ because $3y^{k}_n=3x_{n}^{n-k}=x_{n-1}^{n-k-1}=y^k_{n-1}$ for all $k\in\mathbb Z$ and $n\in \N$.
Clearly $\phi$ is continuous and bijective with inverse $\phi^{-1}=\phi$. 
Finally  we check easily that
\begin{align*}
 \phi \circ \widetilde{T_C}(x)&=\phi( (x^{k-1})_{k\in \N} ),\\
 &= (y^{k+1})_{k\in \Z},\\
 &= \sigma \circ \phi(x).
\end{align*}
This completes the proof.
	

\end{proof}

\subsubsection{Metric mean dimension of $(X_C, T_C)$}
For a compact metric space $(X,d)$ and a sequence $\delta=(\delta_i)_{i\in \N}$ of positive numbers going to zero, we let $d_{\delta}$ be the distance on $X^{\mathbb N}$ defined by 
$$\forall x,y\in X^{\N}, \ d_{\delta}(x,y)=\sup_{i\in \N}\delta_i d(x_i,y_i).$$ A metric equivalent to some $d_\delta$ will be called a product metric and   will be denoted by $d^\N$. Such a product metric is compatible with the product topology on $X^\N$.

We show in the Appendix \ref{app:A} that for any zero-dimensional system $(X,T)$, there is a metric on $D$ on $X$ with $\mdim_M(X,T,D)=\mdim(X,T)=0$. In particular there is such a metric $D_C$ for the system $(\widetilde{X_{T_C}},\widetilde{T_C})$. 
One may wonder  if $D_C$ is a product metric. The following lemma applied to $(X_C, T_C)$ shows that it is not the  case.

\begin{lem}
Let $(X,d,T)$ be a surjective topological dynamical system. Then for any product metric $d^\N$ 
$$\mdim_M(\widetilde{X_T},\widetilde{T},d^{\mathbb N})=\mdim_M(X,T,d).$$
\end{lem}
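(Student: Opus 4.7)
The plan is to compare the Bowen metric $d_n^{\N}$ on $\widetilde{X_T}$ to the Bowen metric $d_n$ on $X$ and to the fixed product metric $d^{\N}$ itself, using the explicit description of the iterates of $\widetilde T$. From the formulas $(\widetilde T^j x)_i = T^{j-i}(x_0)$ for $0 \le i \le j$ and $(\widetilde T^j x)_i = x_{i-j}$ for $i \ge j$, the coordinate $i=0$ immediately gives the lower bound
\begin{equation*}
d_n^{\N}(x, y) \ge \delta_0\, d_n(x_0, y_0).
\end{equation*}
For a matching upper bound I fix $\eta > 0$, choose $N = N(\eta) \in \N$ with $\delta_i < \eta$ for every $i > N$, and bound the supremum defining each $d^{\N}(\widetilde T^j x, \widetilde T^j y)$ by splitting the indices $i$ into three ranges: $i \le j$ (contributing at most $\Delta\, d_n(x_0, y_0)$ with $\Delta := \sup_i \delta_i$), $j < i \le j+N$ with $l := i-j$ satisfying $\delta_l \ge \min_{l' \le N}\delta_{l'}$ (contributing at most $C(\eta)\, d^{\N}(x, y)$ with $C(\eta) := \Delta/\min_{l' \le N(\eta)} \delta_{l'}$), and $i > j + N$ (contributing at most $\eta\, \diam(X)$). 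This yields
\begin{equation*}
d_n^{\N}(x, y) \le \max\bigl(\Delta\, d_n(x_0, y_0),\ C(\eta)\, d^{\N}(x, y),\ \eta\, \diam(X)\bigr).
\end{equation*}

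For $\mdim_M(\widetilde{X_T}, \widetilde T, d^{\N}) \ge \mdim_M(X, T, d)$, given an $(n, \epsilon)$-separated set $E \subset X$ for $(T, d)$, I lift $E$ through $\pi$ to a set $E' \subset \widetilde{X_T}$ of the same cardinality (using that $T$, hence $\pi$, is surjective); the lower bound above makes $E'$ $(n, \delta_0 \epsilon)$-separated for $(\widetilde T, d^{\N})$. Thus $h_{d^{\N}}(\widetilde T, \delta_0 \epsilon) \ge h_d(T, \epsilon)$, and dividing by $\log(1/\delta_0 \epsilon)$ while using $\log(1/\delta_0 \epsilon)/\log(1/\epsilon) \to 1$ as $\epsilon \to 0$ gives the inequality.

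For the reverse direction, I fix $\epsilon > 0$, set $\eta := \epsilon/(3\diam(X))$, and combine a minimal $(n, \epsilon/(3\Delta))$-spanning subset $F_n \subset X$ with a fixed $\epsilon/(3C(\eta))$-spanning subset $G \subset \widetilde{X_T}$ of $d^{\N}$ (finite since $\widetilde{X_T}$ is compact). For any $x, y \in \widetilde{X_T}$ assigned the same pair $(f, g) \in F_n \times G$, the upper bound on $d_n^{\N}$ gives $d_n^{\N}(x, y) \le 2\epsilon/3$, so $r_n(\widetilde{X_T}, d^{\N}, \epsilon) \le |F_n| \cdot |G|$. Since $|G|$ is independent of $n$, taking $\frac{1}{n}\log$ and $\limsup$ yields $h_{d^{\N}}(\widetilde T, \epsilon) \le h_d(T, \epsilon/(3\Delta))$; dividing by $\log(1/\epsilon)$ and letting $\epsilon \to 0$ then concludes.

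The hard part is that the constant $C(\eta) = \Delta/\min_{l \le N(\eta)} \delta_l$ may blow up as $\eta \to 0$ when $(\delta_i)$ is not monotone, so that the product metrics associated with $(\delta_i)$ and its non-increasing envelope need not be Lipschitz equivalent. This blow-up is harmless because $C(\eta)$ enters the argument only through the size of the auxiliary covering set $G \subset \widetilde{X_T}$, which is independent of $n$ and thus vanishes after dividing by $n$; the remaining logarithmic shift in $\epsilon$ is then absorbed after dividing by $\log(1/\epsilon)$, which is the precise reason the metric mean dimension of $\widetilde T$ in $d^{\N}$ does not depend on the particular weights $(\delta_i)$.
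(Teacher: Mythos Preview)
Your proof is correct. The overall strategy matches the paper's: both directions rest on the observation that once $i$ is large enough to make $\delta_i\,\diam(X)$ small, only the first $N=N(\epsilon)$ coordinates of points in $\widetilde{X_T}$ matter at scale $\epsilon$, and the lower bound via lifting separated sets through $\pi$ is identical.

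For the upper bound the execution differs somewhat. The paper takes an $(n,\epsilon)$-separated set $Y\subset\widetilde{X_T}$ and argues combinatorially: for each $x\in Y$, the set $E_x$ of points $y\in Y$ whose projections $\pi(x),\pi(y)$ are \emph{not} $(n,\epsilon)$-separated in $(X,d)$ has cardinality bounded by the maximal $\epsilon$-separated set in $(X^N,d^N)$; a greedy selection then extracts from $\pi(Y)$ an $(n,\epsilon)$-separated subset of size $\ge |Y|/C(\epsilon,N)$, giving $h_{d^{\N}}(\widetilde T,\epsilon)\le h_d(T,\epsilon)$ at the \emph{same} scale $\epsilon$. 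You instead prove the pointwise inequality
\[
d_n^{\N}(x,y)\le\max\bigl(\Delta\, d_n(x_0,y_0),\ C(\eta)\,d^{\N}(x,y),\ \eta\,\diam(X)\bigr)
\]
and deduce the bound via a product of spanning sets. Your route makes the role of the weights $(\delta_i)$ and of the auxiliary constants completely explicit, at the cost of a harmless scale shift $\epsilon\mapsto\epsilon/(3\Delta)$; the paper's greedy argument is slightly sharper but less transparent.
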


\begin{proof}
	One only needs to consider the case of $d^{\N}=d_\delta$ for some  sequence   $\delta=(\delta_i)_{i\in \mathbb N}$ with $\lim_{i\rightarrow \infty}\delta_i=0$ and $0<\delta_i\leq 1$ for all $i$, because any product metric is equivalent to a metric of this form.
	\vspace{0,3cm}

1. \underline{$\mdim_M(\widetilde{X_T},\widetilde{T},d^{\mathbb N})\geq \mdim_M(X,T,d)$.} Let $\epsilon>0$ and $n\in \mathbb N$. Let $E$ be a $(n,\epsilon)$-separated set of $(X,d,T)$. By surjectivity of $T$, there is for each $x\in E$ a point $\tilde{x}=(x_k)_{k\in \N}\in \widetilde{X_f}\subset X^\N$ with $x_0=x$. Let $\widetilde{E}=\{\tilde{x}, \ x\in E\}$. Then for all $x\neq y\in E$, there is $0\leq k<n$ with  $d(f^kx,f^ky)\geq \epsilon$ so that we have 
\begin{align*}
d_\delta(\widetilde{T}^k\tilde{x},\widetilde{T}^k\tilde{y})&\geq \delta_0 d(f^kx_0,f^ky_0),\\
&\geq  \delta_0d(f^kx,f^ky)\geq \delta_0\epsilon.
\end{align*}
Therefore $\widetilde{E}$ is a $(n,\delta_0\epsilon)$-separated subset of $(\widetilde{X_T},d_{\delta},\widetilde{T})$. One easily concludes that $\mdim_M(\widetilde{X_T},\widetilde{T},d^{\mathbb N})\geq \mdim_M(X,T,d)$.
\vspace{0,3cm}

2. \underline{$\mdim_M(\widetilde{X_T},\widetilde{T},d^{\mathbb N})\leq \mdim_M(X,T,d)$.} Fix $\epsilon>0$. Let $N>0$, such that $\sup_{i\geq N}\delta_i \text{diam}_d(X)<\epsilon$. For $n\in \N$, we let $Y$ be a  $(\epsilon,n)$-separated set of $(\widetilde{X_T},d_\delta,\widetilde{T})$.  Denote by $\pi:(\widetilde{X_T},\widetilde{T})\rightarrow (X,T)$, $(x_k)_{k\in \N }\mapsto x_0$  the natural extension.  For $x\in Y$ we let $E_x$ be the set of points $y\in Y$  such that $\pi(x)$ and $\pi(y)$ are not $(\epsilon,n)$-separated with respect to $d$.  Observe that for any $y\in E_x$, there is  $i,j\in [0,N[$ with $(d(x_i,y_i)\geq) \,  \delta_jd(x_i, y_i)\geq \epsilon$ (if not we would have $d_\delta(\widetilde{T}^kx,\widetilde{T}^ky)<\epsilon$ for all $0\leq k<n$).


 Therefore the cardinality of $E_x$ is bounded from above by some constant $C$ depending only on $N$ and $\epsilon$ (namely  the maximal cardinality $C=C(\epsilon, N)$ of $\epsilon$-separated set in $X^N$ for  the usual finite product distance $d^N$ on $X^N$). 
 Consequently there is a  $(\epsilon,n)$-separated  subset $Z\subset\pi(Y)$  of $(X,d,T)$ with  $C\sharp Z\geq   \sharp Y$. For  example we may take   any $z_0=\pi(x_0)$ in $\pi(Y)$, then $z_1=\pi(x_1)$ in $\pi(Y\setminus E_{x_0})$,   $z_2=\pi(x_2)$ in $\pi\left(Y\setminus (E_{x_0}\cup E_{x_1})\right)$, etc. The process stop at some $N$ with $CN\geq \sharp Y  $ and we may then let $Z=\{z_1,z_2,\cdots z_N\}$.
Consequently  $h_{d^{\mathbb Z}}(\widetilde{T},\epsilon)\leq h_d(T,\epsilon)$ for all $\epsilon>0$.   
\end{proof}

\subsection{Mean dimension of a skew-product}	

		Let $X$ and $Y$ be compact spaces. Let $R: X\to X$ and $S: X\times Y\to Y$ be  continuous maps. Define  the {\it skew-product} $T: X\times Y\circlearrowleft$ over $(X,R)$ by $(x,y)\mapsto (R(x), S(x,y))$. 
		
	\begin{lem}\label{lem:skew product}	Let $\alpha$ be an open cover of $X$ and $\beta=\alpha\times Y$ be the induced cover of $X\times Y$. Then 
	$$\mdim(X\times Y, T,\beta)\geq \mdim(X, R, \alpha). $$
	In particular, we have 	$\mdim(X\times Y, T)\ge \mdim(X,R)$.
	\end{lem}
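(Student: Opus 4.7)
The key observation is that the projection $\pi:X\times Y\to X$, $(x,y)\mapsto x$, is a topological factor map from $(X\times Y,T)$ onto $(X,R)$, because $\pi\circ T = R\circ \pi$, and that $\beta=\pi^{-1}\alpha$. From equivariance one immediately gets
$$
\bigvee_{i=0}^{n-1}T^{-i}\beta \;=\; \pi^{-1}\!\left(\bigvee_{i=0}^{n-1}R^{-i}\alpha\right).
$$
Therefore it suffices to prove the purely topological statement that $D(\pi^{-1}\gamma)\ge D(\gamma)$ for any finite open cover $\gamma$ of $X$: once this is established, dividing by $n$ and letting $n\to\infty$ gives $\mdim(X\times Y,T,\beta)\ge \mdim(X,R,\alpha)$, and taking the supremum over $\alpha$ yields the ``In particular'' part.

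To prove $D(\pi^{-1}\gamma)\ge D(\gamma)$, I would use the fact that $\pi$ admits a continuous section. Fix any point $y_0\in Y$ and define $\iota:X\to X\times Y$ by $\iota(x)=(x,y_0)$; then $\iota$ is continuous and $\pi\circ\iota=\Id_X$. Given any finite open cover $\delta'$ of $X\times Y$ with $\delta'\succ \pi^{-1}\gamma$, consider the pulled-back cover $\iota^{-1}\delta':=\{\iota^{-1}(V):V\in\delta'\}$ of $X$. Two things have to be checked:
\begin{itemize}
\item $\iota^{-1}\delta'\succ \gamma$: if $V\in\delta'$ is contained in some $\pi^{-1}(U)=U\times Y$ with $U\in\gamma$, then $\iota^{-1}(V)\subset \iota^{-1}(U\times Y)=U$.
\item $\text{ord}(\iota^{-1}\delta')\le \text{ord}(\delta')$: for any $x\in X$, $\sum_{V\in\delta'} \mathbf 1_{\iota^{-1}(V)}(x)=\sum_{V\in\delta'}\mathbf 1_{V}(x,y_0)\le \text{ord}(\delta')+1$.
\end{itemize}
Taking the infimum over all $\delta'\succ \pi^{-1}\gamma$ gives $D(\gamma)\le D(\pi^{-1}\gamma)$.

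There is no real obstacle here; the only point that might require a word of caution is that the inequality $D(\pi^{-1}\gamma)\ge D(\gamma)$ does \emph{not} hold for an arbitrary factor map (one genuinely uses the existence of the continuous section $\iota$ coming from the product structure). For a skew-product this section is free of charge, so the argument goes through cleanly and does not require any hypothesis on $Y$ beyond being a nonempty compact space, nor any hypothesis on $S$ beyond continuity.
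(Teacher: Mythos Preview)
Your proof is correct and is essentially the same argument as the paper's: the paper also fixes a point $y\in Y$, restricts a minimal-order refinement $\gamma_n$ of $\bigvee_{k=0}^{n-1}T^{-k}\beta$ to the slice $X\times\{y\}$, and projects to obtain a refinement of $\bigvee_{k=0}^{n-1}R^{-k}\alpha$ with no larger order. Your packaging via the section $\iota$ and the standalone inequality $D(\pi^{-1}\gamma)\ge D(\gamma)$ is a clean abstraction of exactly this step.
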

	\begin{proof}
	 For any $n\in \N$, we let $\gamma_n$ be a finite open cover of $X\times Y$ finer  $\bigvee_{k=0}^{n-1} T^{-k}\beta $ with $D(\bigvee_{k=0}^{n-1} T^{-k}\beta )=   \text{ord} (\gamma_n)$. Fix $y\in Y$ and consider  the  open cover $\gamma'_n$ of $X$ given by the sets $\pi_X\left (O\cap (X\times\{y\})\right)$ over $O\in \gamma_n$, where $\pi_X:X\times Y\rightarrow X$ denotes the projection on the $X$-coordinate. Clearly $\gamma'_n$ is finer then 
 $\bigvee_{k=0}^{n-1} R^{-k}\alpha$ and $\text{ord}(\gamma'_n)\leq \text{ord}(\gamma_n)$. Therefore $\frac{D(\bigvee_{k=0}^{n-1} T^{-k}\beta )}{n}\geq \frac{ D(\bigvee_{k=0}^{n-1} R^{-k}\alpha )}{n}$ and we conclude by taking the limit in $n$. 
 \end{proof}
	

\begin{rem}
The above proof still applies in the wider context of a topological  system $T:E\circlearrowleft$ with $E\subset X\times Y$,  $\pi_X(E)=X$  (where $\pi_X$ denotes the coordinate projection on $X$) and $R\circ \pi_X=\pi_X\circ T$, such that there exists $y$ with $X\times \{y\} \subset E$. 
\end{rem}	
	
\begin{rem}	\label{rem:nat skew product}
In general the natural extension of a skew-product is not a skew-product. When the skew-product is trivial, i.e. with $S$ depending only on $x$, then the natural extension $(\widetilde{X_R}, \widetilde{R})$ and $(\widetilde{(X\times Y)_T},\widetilde{T})$ are topologically conjugated via the map
$(x_k)_k\in \widetilde{X_R} \mapsto (x_k, S(x_{k+1}))_k\in \widetilde{(X\times Y)_T}$, in particular these systems have the same mean dimension.
 \end{rem}
	
	\section{General one-dimensional cellular automata}
		Let $X$ be a compact metrizable space.  Let $F$ be a cellular automaton  on $X^{\mathbb Z}$ with a continuous  local rule $f:X^I\rightarrow X$ for $I \subset \Z$. For $x=(x_k)_{k\in \Z}\in X^{\mathbb Z}$ and a finite subset $K$ of $\mathbb Z$  we denote by $x_K$ the tuple given by the $j$-coordinate of $x$ for $j\in K$, i.e.
	$$
	x_{K}=(x_{j_1}, x_{j_2}, \dots, x_{j_k})~\text{for}~K=\{j_1<j_2<\cdots<j_k\}.
	$$
	Let $J=J_F$ be the integers in the convex hull of $I\cup \{0\}$ and   $J^*=J\setminus \{0\}$. Let  $J_-=\min\{J\}$ and $J_+=\max\{J\}$.   Then $\text{\rm diam}(I\cup\{0\})=\text{\rm diam}(J)=\sharp J^*$. We denote again by $f$ the function from  $X^J$ to $X$,  mapping $(x_j)_{j\in J}$ to $ f\left((x_j)_{j\in I}\right)$.

\subsection{Upper bound for the mean dimension }
	We first generalize the upper bound of the mean dimension w.r.t. the shift map obtained in \cite[Proposition 3.1]{LindenstraussWeiss2000MeanTopologicalDimension} to cellular automata.

	\begin{prop}\label{prop:upper bound}
		Let $X$ be a compact metric space.  Let $F$ be a cellular automaton  on $X^{\mathbb Z}$ with a continuous  local rule $f:X^I\rightarrow X$ for $I \subset \Z$. Then $\mdim(X^{\Z}, F)\le \stab(X)\cdot {\rm diam}(I\cup \{0\}).$
	\end{prop}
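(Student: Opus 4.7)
The plan is to adapt the Lindenstrauss--Weiss argument for the shift (\cite[Proposition 3.1]{LindenstraussWeiss2000MeanTopologicalDimension}), using the $\epsilon$-dimension characterization
\[\mdim(X^\Z,F)=\sup_{\epsilon>0}\lim_{n\to\infty} \dim_\epsilon(X^\Z,d_n)/n\]
recalled in Section \ref{sec:background}. One may assume $\stab(X)<\infty$ (equivalently, $\dim X<\infty$), since otherwise the right-hand side is infinite and the inequality is trivial. Fix a metric $\rho$ on $X$ with $\diam_\rho(X)\le 1$ and endow $X^\Z$ with the product metric $d(x,y)=\sum_{j\in\Z}2^{-|j|}\rho(x_j,y_j)$.

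The main step is a propagation (light-cone) estimate: by induction on $i\ge 0$ one checks that $F^i(x)_k$ depends on $x$ only through the coordinates indexed by $k+iI$, where
\[iI:=\{j_1+\cdots+j_i:j_s\in I\}\subset[iJ_-,iJ_+].\]
Consequently, for any $M\ge 1$, the tuple $(F^i(x)_k)_{|k|\le M,\,0\le i<n}$ is a continuous function of the restriction of $x$ to the integer window
\[W_{M,n}:=[-M+(n-1)J_-,\,M+(n-1)J_+]\cap\Z,\]
whose cardinality equals $|W_{M,n}|=2M+1+(n-1)\diam(J)$. This propagation estimate, linear in $n$ with slopes $J_-$ and $J_+$, is the only mildly delicate ingredient; the rest of the argument mirrors the shift-map case.

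Given $\epsilon>0$, fix $M$ with $2^{1-M}<\epsilon$, and consider the projection $\phi_n:X^\Z\to X^{W_{M,n}}$, $x\mapsto x|_{W_{M,n}}$. If $\phi_n(x)=\phi_n(y)$, then by the propagation estimate $F^ix$ and $F^iy$ agree on $[-M,M]$ for every $0\le i<n$, so $d(F^ix,F^iy)\le\sum_{|j|>M}2^{-|j|}\le 2^{1-M}<\epsilon$ and hence $d_n(x,y)<\epsilon$. Thus $\phi_n$ is a continuous $(d_n,\epsilon)$-injective map into $X^{|W_{M,n}|}$, so $\dim_\epsilon(X^\Z,d_n)\le\dim(X^{|W_{M,n}|})$. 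Since $\stab(X)=\inf_L\dim(X^L)/L=\lim_{L\to\infty}\dim(X^L)/L$, for every $\delta>0$ we have $\dim(X^L)\le L(\stab(X)+\delta)$ for all sufficiently large $L$. Applying this with $L=|W_{M,n}|\to\infty$, dividing by $n$, and letting $n\to\infty$ yields
\[\mdim(X^\Z,F,d,\epsilon)\le \diam(J)(\stab(X)+\delta);\]
letting $\delta\to 0$ and then $\epsilon\to 0$, with $\diam(J)=\diam(I\cup\{0\})$, gives the announced bound.
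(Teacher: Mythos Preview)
Your proof is correct and follows essentially the same light-cone argument as the paper: both exploit that $(F^i x)_k$ depends only on coordinates of $x$ in a window growing linearly with $i$, then bound by $\dim(X^{L_n})$ with $L_n\sim n\,\diam(J)$ and invoke the definition of $\stab(X)$. The only difference is cosmetic---you use the $\epsilon$-dimension characterization of mean dimension with an explicit product metric, whereas the paper works directly with the open-cover quantity $D(\alpha)$---but the content is the same.
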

	
	\begin{proof}
		For every $K\subset \Z$, let $\pi_K: X^\Z \to X^K$ be the natural projection. Let $J$ be the integers in the convex hull of $I\cup \{0\}$. Let $\AA$ be a finite open cover of $X^\Z$. There is a finite open cover $\mathcal B$ finer than $\AA$ such that for some positive integer $N$  $$\mathcal B \subset \mathcal{O}(-N,N):=\{ \{(x_n)_{n\in \Z}, \ x_{-N}\cdots x_N\in O\} \ : \  O~\text{is open in}~X^{2N+1}\}.$$ By assumption of $F$, we get that $$\bigvee_{k=0}^{n-1}F^{-k}\BB \subset \mathcal{O}(-N+nJ_- , N+nJ_+ ).$$ Let $K_n$ be the integers in $[-N+nJ_- , N+nJ_+]$.
		It follows that there is a cover $\mathcal C\succ \pi_{K_n}  (\bigvee_{k=0}^{n-1}F^{-k}\BB)$ of $X^{K_n}$ such that ${\rm ord}(\mathcal C)\le \dim (X^{ K_n})$. As $\pi_{K_n}^{-1} \mathcal C\succ  \bigvee_{k=0}^{n-1}F^{-k}\BB\succ  \bigvee_{k=0}^{n-1}F^{-k}\AA$, we obtain that
		$$
		\frac{D(\bigvee_{k=0}^{n-1}F^{-k}\AA)}{n}\le \frac{\dim (X^{ K_n})}{n} = \frac{\dim (X^{ K_n})}{\sharp K_n} \cdot \frac{2N+n\cdot {\rm diam}(J)}{n}. 
		$$
		Therefore, we conclude that $\mdim(X^{\Z}, F)\le \stab(X)\cdot {\rm diam}(J).$
	\end{proof}
	
A cellular automaton  on $X^{\mathbb Z}$ with  local rule $f:X^I\rightarrow X$  is  also a cellular automaton with local rule $f':X^K\rightarrow X$ for $K\supset I$ by letting  $f'(x_K)=f(x_I)$. Thus we need some extra conditions on local rules in order to calculate the value of mean dimension.


	\subsection{A factor of CA}\label{previ}

	Define $\phi:X^{\mathbb Z}\rightarrow X\times \left(X^{J^*}\right)^{\mathbb N}$ by $$x=(x_n)_{n \in \mathbb Z}\mapsto (x_0,(x_{J^*}, F(x)_{J^*}, \cdots , F^{k}(x)_{J^*},\cdots)).$$ 
	and  $g:X\times \left(X^{J^*}\right)^{\mathbb N}\circlearrowleft $ by
	$$\left(x,(y^n)_{n\in \mathbb{N}} \right)\mapsto (f(z), \sigma y)$$
	where $z$ is the  point of  $X^J$ defined as  $z_0=x, (z_i)_{ i\in J^*}=y^0$. 
	For any $ x\in X^{\mathbb Z}$, we have then
	\begin{align*}
	\phi\circ F(x)&=(f(x_J),(F(x)_{J^*}, F^2(x)_{J^*}, \cdots , F^{k}(x)_{J^*},\cdots) =g\circ\phi(x).
	\end{align*}
	In general $\phi$ is not surjective, but when this is the case, the continuous map $\phi$ defines a factor map from $(X^{\mathbb Z}, F)$ to $(X\times X^{\mathbb N},g)$.
	
	\begin{cor}\label{cor:mdim X times X N}
	 $$\mdim\left(X\times \left(X^{J^*}\right)^{\mathbb N}, g\right)\ge \stab(X) \cdot \text{\rm diam}(I\cup\{0\}).$$
	\end{cor}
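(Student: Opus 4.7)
The plan is to reduce to the (known) mean dimension of a full shift via two intermediate steps, each of which invokes a tool already established in the paper. The first step is to observe that, after swapping the order of the two factors, the map $g$ becomes a genuine skew-product over the base dynamics $\sigma$ on $(X^{J^*})^{\mathbb N}$: indeed, the map $\tilde g:(X^{J^*})^{\mathbb N}\times X\circlearrowleft$ defined by $((y^n)_n, x)\mapsto (\sigma y, f(z))$ (with $z_0=x$, $z_{J^*}=y^0$ as before) has first coordinate evolving autonomously as the one-sided shift, so it fits the skew-product framework of the previous subsection. Since $\tilde g$ is topologically conjugate to $g$, Lemma \ref{lem:skew product} yields
$$\mdim\bigl(X\times (X^{J^*})^{\mathbb N}, g\bigr)\;\geq\;\mdim\bigl((X^{J^*})^{\mathbb N}, \sigma\bigr).$$

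Next, I would identify the natural extension of the one-sided full shift $((X^{J^*})^{\mathbb N}, \sigma)$ with the two-sided full shift $((X^{J^*})^{\mathbb Z}, \sigma)$. The shift is surjective on $(X^{J^*})^{\mathbb N}$, and a compatible sequence $(x^k)_{k\in\mathbb N}$ with $\sigma x^{k+1}=x^k$ is uniquely determined by the bilateral word $(y_m)_{m\in\mathbb Z}$ given by $y_m=x^0_m$ for $m\geq 0$ and $y_{-k}=x^k_0$ for $k>0$; this identification is easily seen to intertwine $\widetilde{\sigma}$ with the bilateral shift. Proposition \ref{nat} then gives
$$\mdim\bigl((X^{J^*})^{\mathbb N}, \sigma\bigr)\;\geq\;\mdim\bigl((X^{J^*})^{\mathbb Z}, \sigma\bigr).$$
Finally, by the classical Lindenstrauss--Weiss computation of the mean dimension of a full shift, one has $\mdim((X^{J^*})^{\mathbb Z}, \sigma)=\stab(X^{J^*})$, and the definition $\stab(Y)=\lim_n \dim(Y^n)/n$ directly gives $\stab(X^{J^*})=|J^*|\cdot \stab(X)=\diam(I\cup\{0\})\cdot \stab(X)$.

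Chaining these three inequalities yields the corollary. There is no substantial obstacle here: the only points requiring care are the coordinate swap needed to put $g$ into the exact format of Lemma \ref{lem:skew product} (purely notational), the verification that the one-sided shift is surjective so that Proposition \ref{nat} applies without first restricting to $\bigcap_n \sigma^n((X^{J^*})^{\mathbb N})$, and the applicability of the Lindenstrauss--Weiss formula to the compact metric space $X^{J^*}$, all of which are routine.
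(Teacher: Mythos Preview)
Your argument is correct and follows the same two-step scheme as the paper: apply Lemma~\ref{lem:skew product} to bound below by the mean dimension of the full shift on $X^{J^*}$, then invoke the known formula for the latter. The paper is slightly more direct, citing \cite[Theorem~1.1]{tsukamoto2019mean} immediately for $\mdim\bigl((X^{J^*})^{\mathbb N},\sigma\bigr)=\sharp J^*\cdot\stab(X)$, so your detour through Proposition~\ref{nat} and the bilateral shift is unnecessary; note also that for a general compact metrizable alphabet the identity $\mdim(X^{\mathbb Z},\sigma)=\stab(X)$ is due to Tsukamoto rather than Lindenstrauss--Weiss (who established only the upper bound and the case of polyhedra).
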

	\begin{proof}
		By Lemma \ref{lem:skew product} and \cite[Theorem 1.1]{tsukamoto2019mean}, we have
		$$\mdim\left(X\times \left(X^{J^*}\right)^{\mathbb N}, g\right)\ge \mdim\left( \left(X^{J^*}\right)^{\mathbb N}, \sigma\right)=\sharp J^* \cdot \stab(X).$$
	\end{proof}
			
For higher dimensional CA, we may generalize the above semi-conjugacy as follows. Let $J$ be a subset of $\mathbb Z^d$. Then for any $J'$ contained in $J$ satisfying $k+I\subset J$ for all $k\in J'$, we define $\phi=\phi_{J, J'}$ by $\phi:X^{\mathbb{Z}^d}\rightarrow X^{J'}\times \left(X^{J\setminus J'}\right)^{\mathbb N}$ by $$x=(x_n)_{n \in \mathbb Z}\mapsto (x_{J'},(x_{J\setminus J'},F(x)_{J\setminus J'}, \cdots , F^{k}(x)_{J\setminus J'},\cdots)).$$
 and $g=g_{J,J'}:X^{J'}\times \left(X^{J\setminus J'}\right)^{\mathbb N}\circlearrowleft $ by
	$$\left(x,(y^n)_{n\in \mathbb{N}} \right)\mapsto \left( \left(f(z_{k+I}) \right)_{k\in  J'}, \sigma y\right)$$
	where
 $z_{k+I}:=(z_{k+i})_{i\in I}$ is the  point of  $X^I$ defined as  $z_{k+i}=x_{k+i}$ for  $k+i\in  J'$ and  $z_{k+i}=y^0_{k+i}$ for $k+i\in J\setminus J'$.
 Then we have again  $\phi\circ F(x)=g\circ\phi(x)$ for all $x\in X^{\mathbb{Z}^d }$.

	\subsection{CA with surjective local rule}\label{f}

	For a topological dynamical system $(X,T)$ we let $NW(T)$ be the set of non-wandering points of $(X,T)$. Let $F$ be a cellular automaton with local rule $f:X^I\rightarrow X$. For  positive integers $n$ we define by induction  the subsets $X_n$ of $X$ by $X_n= f(X_{n-1}^I)$ and $X_0=X$. Finally we let $X_\infty:=\bigcap_{n\in \mathbb N}X_n$, which is a compact subspace of $X$. Clearly $f(X_\infty^I)=X_\infty$. Therefore the restriction of  $F$ to $X_\infty^{\mathbb Z}$ is a CA with a surjective local rule.

	\begin{prop}\label{ima}
		With the above notations, 
		$$\mdim(X^\Z, F)=\mdim(X_\infty^{\mathbb Z}, F|_{X_\infty^{\mathbb Z}}).$$
	\end{prop}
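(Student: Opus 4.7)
The direction $\mdim(X_\infty^{\Z}, F|_{X_\infty^{\Z}}) \leq \mdim(X^{\Z}, F)$ is immediate because $X_\infty^{\Z}$ is a closed $F$-invariant subsystem, so the plan focuses on the reverse inequality.

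The strategy is to approximate $X_\infty^{\Z}$ from above by the nested decreasing sequence of $F$-invariant subshifts $X_m^{\Z}$ (these are forward-invariant since $F(X_m^{\Z}) \subset X_{m+1}^{\Z} \subset X_m^{\Z}$) and show that the mean dimension on $X^{\Z}$ is already captured at each finite stage $m$. Fix a finite open cover $\alpha$ of $X^{\Z}$ and split
\[
\bigvee_{k=0}^{N-1} F^{-k}\alpha \;=\; \left(\bigvee_{k=0}^{m-1} F^{-k}\alpha\right) \;\vee\; F^{-m}\!\left(\bigvee_{k=0}^{N-m-1} F^{-k}\alpha\right).
\]
The first join contributes $D\bigl(\bigvee_{k=0}^{m-1}F^{-k}\alpha\bigr)$, a constant in $N$. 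For the second, since $F^m(X^{\Z}) \subset X_m^{\Z}$, a pullback argument (in the spirit of \S\ref{f}) gives $D(F^{-m}\beta) \leq D(\beta|_{X_m^{\Z}})$, and a routine verification shows $(\bigvee_{k} F^{-k}\alpha)|_{X_m^{\Z}} = \bigvee_{k}(F|_{X_m^{\Z}})^{-k}(\alpha|_{X_m^{\Z}})$. Subadditivity of $D$, division by $N$, and letting $N\to\infty$ then yield $\mdim(X^{\Z}, F, \alpha) \leq \mdim\bigl(X_m^{\Z}, F|_{X_m^{\Z}}, \alpha|_{X_m^{\Z}}\bigr)$ for every $m \geq 0$.

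The main obstacle is to pass to the limit $m \to \infty$, namely to show $\lim_{m\to\infty}\mdim(X_m^{\Z}, F, \alpha|_{X_m^{\Z}}) \leq \mdim(X_\infty^{\Z}, F, \alpha|_{X_\infty^{\Z}})$, from which the proposition follows by taking $\sup$ over $\alpha$; the reverse inequality is free since $X_\infty^{\Z} \subset X_m^{\Z}$. The difficulty is that mean dimension need not be continuous under decreasing Hausdorff limits of compact invariant sets in general. My approach would be to reduce via refinement to covers of the form $\alpha = \pi_W^{-1}(\alpha_W)$ factoring through a finite window $W \subset \Z$, so that $\bigvee_{k=0}^{N-1} F^{-k}\alpha$ factors through a window $W_N \subset \Z$ of linear size in $N$ and $D\bigl((\bigvee F^{-k}\alpha)|_{X_m^{\Z}}\bigr) \leq D(\alpha_{W_N}|_{X_m^{W_N}})$. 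Since $X_m \downarrow X_\infty$ in the compact metric space $X$, the finite-dimensional compact sets $X_m^{W_N}$ converge in Hausdorff distance to $X_\infty^{W_N}$; one must then argue that the cover dimensions $D(\alpha_{W_N}|_{X_m^{W_N}})$ match $D(\alpha_{W_N}|_{X_\infty^{W_N}})$ up to corrections that are sublinear in $N$, using the product structure on the windows and the already established constancy of $\mdim(X_m^{\Z}, F)$ in $m$.
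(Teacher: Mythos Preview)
Your Step 1 is correct and actually yields the pleasant intermediate fact $\mdim(X^{\Z},F)=\mdim(X_m^{\Z},F)$ for every finite $m$. The gap is entirely in Step 2: passing from the $X_m^{\Z}$ to their intersection $X_\infty^{\Z}$. You acknowledge yourself that mean dimension is not continuous along decreasing Hausdorff limits, and nothing in your sketch overcomes this. The bound $D\bigl((\bigvee F^{-k}\alpha)|_{X_m^{\Z}}\bigr)\le D(\alpha_{W_N}|_{X_m^{W_N}})$ goes the wrong way for what you need: the right-hand side is controlled only by $\dim(X_m^{W_N})$, and the examples of Section~\ref{sec:Examples with a lower dimensional subsystem} show precisely that $\dim(X_m)$ need not drop to $\dim(X_\infty)$ at any finite stage. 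Invoking ``the already established constancy of $\mdim(X_m^{\Z},F)$ in $m$'' is not circular, but it is unhelpful: knowing a sequence is constant says nothing about the value on the intersection of the underlying spaces.

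The paper's proof avoids this limiting step altogether by a single observation: the mean dimension of any system equals that of its non-wandering set (a result of Gutman, \cite[Lemma 7.2]{gut17}), and for any system $NW(T)\subset\bigcap_n T^nX$. Since $F^n(X^{\Z})\subset X_n^{\Z}$, one gets $NW(F)\subset X_\infty^{\Z}$ directly, and the inequality $\mdim(X^{\Z},F)\le\mdim(X_\infty^{\Z},F)$ follows in one line. Your decomposition in Step 1 is essentially rediscovering the inclusion $\bigcap_n F^n(X^{\Z})\subset X_\infty^{\Z}$ at the level of covers; the missing ingredient is the non-wandering set reduction, which is exactly what lets one replace the limit over $m$ by a single inclusion.
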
 
	\begin{proof}
		Since $X_\infty \subset X$, $\mdim(X^\Z, F)\ge \mdim(X_\infty^{\mathbb Z}, F|_{X_\infty^{\mathbb Z}})$. It remains to show the other direction.
		Notice that for a general topological system $(X,T)$ we always have  $NW(T)\subset \bigcap_{n\in \mathbb N}T^n X$. Moreover  $\mdim(X, T)=\mdim(NW(T), T|_{NW(T)})$ by \cite[Lemma 7.2]{gut17}.
		But $F^n(X^\mathbb Z)\subset X_n^{\mathbb Z}$ for all $n$, therefore $NW(F)\subset X_\infty^{\mathbb Z}$, implying that $\mdim(X^\Z, F)\le \mdim(X_\infty^{\mathbb Z}, F|_{X_\infty^{\mathbb Z}})$.
	\end{proof}
	
	When considering the mean dimension, by Proposition \ref{ima} and the argument above, we could restrict to CA's with surjective local rule.

	\section{Permutative one-dimensional CA}\label{sec:Permutative CA}
	Let $X$ be a compact metric space.  Let $f$ be a continuous function  $f:X^I\rightarrow X$ with  $I \subset \Z$.  For any $j\in I$ and  for any $x^j=(x_i)_{i\in I\setminus \{j\}} \in X^{I\setminus \{j\}}$,  we denote by  $f_{x^j}:X\circlearrowleft $ the continuous function $x_j \mapsto f((x_i)_{i\in I})$.

	A cellular automaton $F$ on $X^{\mathbb Z}$ with   local rule $f$ is said to be {\it permutative}
	(resp. {\it strongly permutative}) when for $j\in \{\max I,\min I\}\setminus \{0\}$  and for all $x^j
	\in X^{I\setminus \{j\}}$ the map $f_{x^j}$ is surjective (resp. bijective \footnote{By 
		compactness of $X$, the map $f_{x^j}$ is then an homeomorphism.}).
	
	If $F$ is permutative (resp. stongly permutative) then so is $F^k$  and $J_{F^k}=kJ_F$ for any $k\in \mathbb N^*$ (see Lemma 16 in \cite{Bur20}). Note that in the discrete case, any  permutative CA is  strongly permutative as the set of states is finite. 
	
	\subsection{Maximal mean dimension for strongly permutative CA's}	
	
	When  there are no negative integers (resp. positive) in the domain $I$,  the local rule $f$ induces a cellular automaton on $X^{\mathbb N}$ (resp. $X^{-\mathbb N}$) which we denote respectively by $F^+$ and $F^-$. We let
	\begin{equation*}
	(\mathbb Y,G)=
	\begin{cases}
	(X^\mathbb{Z},F), ~&\text{if $I$ contains both  negative and positive integers},\\
	(X^\mathbb{N},F^+)~&\text{if $I$ contains no  negative integers},\\
	(X^{-\mathbb{N}},F^-)~&\text{if $I$ contains no positive integers}.
	\end{cases}
	\end{equation*}

	\begin{lem}\label{lem:topological extension}Let $F$ be a  permutative  cellular automaton. 
		The dynamical system $(\mathbb Y, G)$ is a topological extension of  $(X\times X^\N, g)$ via $\phi$.
	\end{lem}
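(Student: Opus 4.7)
The plan is to verify that the continuous map $\phi$ defined in Section \ref{previ} is a factor map from $(\mathbb{Y},G)$ onto $(X\times (X^{J^*})^{\N},g)$. Continuity and the intertwining relation $\phi\circ G=g\circ\phi$ were already established there (the same computation goes through with $\Z$ replaced by $\N$ or $-\N$ in the one-sided cases), so the remaining content of the lemma is surjectivity of $\phi$.

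To prove surjectivity I fix an arbitrary target $(a_0,(y^k)_{k\in\N})\in X\times (X^{J^*})^{\N}$ and build a preimage $x\in\mathbb{Y}$ by specifying its coordinates. The first two slots of the target force $x_0=a_0$ and $x_j=y^0_j$ for $j\in J^*$, so $x$ is already determined on $J=[J_-,J_+]$. The remaining coordinates must then be chosen so that $F^{k}(x)_{J^*}=y^k$ for every $k\ge 1$, and here I will invoke permutativity: when $\max I>0$, the map $f$ is surjective in the coordinate indexed by $\max I$, so any identity of the form $v=f((z_i)_{i\in I})$ is solvable for $z_{\max I}$ once every other $z_i$ is prescribed; the symmetric fact holds for $\min I$ when $\min I<0$.

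Starting from $J$, I will extend $x$ outward by a double induction, outer on $k\ge 0$ and inner on $j\in J^*$. In the outer step from $k$ to $k+1$, I process the constraints $F^{k+1}(x)_j=y^{k+1}_j$ for $j\in J^*$: on the right, I sweep $j$ from the smallest positive element of $J^*$ up to $J_+$ and use the recursion $F^{k+1}(x)_j=f\big((F^{k}(x)_{j+i})_{i\in I}\big)$; unwinding the nested formula $F^{k}(x)_{\ell}=f\big((F^{k-1}(x)_{\ell+i})_{i\in I}\big)$ and applying permutativity at $\max I$ at every level reduces the unknown to the single new coordinate $x_{j+(k+1)\max I}$, which is then uniquely solvable. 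The symmetric procedure with $\min I$ handles $j<0$ on the left. A direct bookkeeping check shows that after stage $k$ the coordinates of $x$ are determined on the interval $[J_-+(k+1)\min I,\,J_++(k+1)\max I]$ intersected with the index set of $\mathbb{Y}$; the union over $k$ exhausts this index set and yields the desired $x$ with $\phi(x)=(a_0,(y^k)_{k})$.

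The main obstacle I foresee is precisely this bookkeeping: each invocation of permutativity requires all slots of $f$ except the permutative one to be already known, so the order in which the constraints are processed is delicate. Sweeping $j$ in increasing order on the positive side (and decreasing order on the negative side) and unrolling the recursion from the outermost $F$ inward makes each solve well-posed, after which the construction is routine. In the one-sided cases $\mathbb{Y}=X^{\N}$ or $X^{-\N}$ only one of the two permutativities is available, but only the corresponding one-sided extension is required, so the argument carries over unchanged.
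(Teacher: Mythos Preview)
Your argument is correct and follows essentially the same route as the paper: reduce to surjectivity of $\phi$, then build a preimage by an outer induction on $k$ and an inner sweep over $j\in J^*$ (positive $j$ increasing, negative $j$ decreasing), each step producing one new outermost coordinate of $x$. The only substantive difference is packaging: the paper invokes the fact that $F^{k+1}$ is itself permutative with $J_{F^{k+1}}=(k+1)J_F$ (Lemma~16 in \cite{Bur20}), so a single application of permutativity of $F^{k+1}$ at $(k+1)J_+$ lets one adjust $x_{j+(k+1)J_+}$ to hit the target $(F^{k+1}x)_j=y^{k+1}_j$; you instead unwind the composition level by level and apply permutativity of $f$ at $\max I$ at each level, which amounts to reproving that lemma inline. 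One small slip: you write that the new coordinate is ``uniquely solvable'', but under mere permutativity the map is only surjective, so the solution need not be unique; since you only need existence of a preimage, simply choose one value at each step.
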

	\begin{proof}
By Subsection \ref{previ} we only need to show the surjectivity of $\phi$. Let $(x, (y^k)_{k\in\mathbb N})\in X\times(X^{J^*})^{\mathbb N}$. 
Let us  show that there is $z\in X^\mathbb{Z}$ with $\phi(z)=(x, (y^k)_{k\in\mathbb N})$. We prove by induction on $k$ that there exist $z
\in \mathbb{Y}$ such that we have  $z_0=x$, $(z_j)_{j\in J^*}=y^1$, ..., $F^k(z)_{J^*}=y^k$. Assume it holds for $k$ and $1\in J=J_F$.
   Recall that $F^{k+1}$ is permutative and $J_{F^{k+1}}=(k+1)J_F$. Therefore we may change only the $K$-coordinate $z_K$ of $z$ with  $K=(k+1)J_++1$ to ensure $(F^{k
+1}z)_1=y^{k+1}_1$. Then we argue similarly for the other $j$-coordinates of $F^{k+1}z$ increasingly in $j\in J^*\cap \mathbb N$. Finally 
we may consider in the same way negative $j\in J^*$ starting from $-1$ and going decreasingly to $j=J_-$.    
	\end{proof}

	\begin{lem}\label{lem:conjugate}
	 Let $f:X^I\rightarrow X$ be a  strongly permutative  local rule.  Then 
	 $(\mathbb Y, G)$ is  topological conjugated to $(X\times X^\N, g)$ via $\phi$.	 
	\end{lem}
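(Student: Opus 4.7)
My plan is to deduce the lemma from Lemma \ref{lem:topological extension} by upgrading its surjectivity statement to bijectivity and then invoking compactness. The intertwining relation $\phi \circ G = g \circ \phi$ has already been verified in Subsection \ref{previ}, and since strong permutativity implies permutativity, Lemma \ref{lem:topological extension} already gives that $\phi : \mathbb{Y} \to X \times (X^{J^*})^{\mathbb{N}}$ is a continuous surjection. Because both spaces are compact Hausdorff, it will suffice to prove injectivity of $\phi$; the map will then automatically be a homeomorphism, and the desired topological conjugacy follows.

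For injectivity I would revisit the inductive construction of a preimage carried out in the proof of Lemma \ref{lem:topological extension}. There, starting from a prescribed $(x,(y^k)_{k \in \mathbb{N}})$, the preimage $z \in \mathbb{Y}$ is built coordinate by coordinate: having fixed $z_m$ for $m \in [(k+1)J_-,(k+1)J_+]$, one extends to level $k+1$ by adjusting the new coordinate $z_{(k+1)J_+ + j}$ for $j \in J^* \cap \mathbb{N}$ in increasing order of $j$ (and symmetrically on the negative side, at $z_{(k+1)J_- + j}$) so as to enforce $F^{k+1}(z)_j = y^{k+1}_j$. Each such adjustment exploits permutativity of $F^{k+1}$ at its extremal window positions $(k+1)J_{\pm}$. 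Under strong permutativity of $F$, the iterate $F^{k+1}$ is itself strongly permutative with $J_{F^{k+1}} = (k+1)J_F$ (as recalled before the statement, citing \cite{Bur20}), so each of these adjustments is governed by a \emph{bijective} map $f_{x^j}$ rather than a merely surjective one. Consequently every coordinate of $z$ is uniquely determined by the data $(x,(y^k)_{k})$, which is precisely injectivity of $\phi$.

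The only real point to verify, beyond copying the induction of Lemma \ref{lem:topological extension}, is that at every adjustment step all the other coordinates of $z$ entering the relevant evaluation of $F^{k+1}(z)_j$ have already been specified by the earlier steps. This is guaranteed by the order of adjustments prescribed in that construction: the coordinate $z_{(k+1)J_+ + j}$ is the unique maximal index in the relevant window of $F^{k+1}$, and by induction all smaller-index coordinates lie either in $[(k+1)J_-,(k+1)J_+]$ (fixed at the previous stage) or were fixed earlier at this stage for smaller $j$. So no additional combinatorial bookkeeping is required, and I do not expect any obstacle beyond this verification.
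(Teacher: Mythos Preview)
Your proposal is correct and follows essentially the same approach as the paper: the paper simply observes that under strong permutativity the preimage $z\in\mathbb{Y}$ built in the proof of Lemma \ref{lem:topological extension} is uniquely determined, so $\phi$ is a bijection and hence a homeomorphism. Your argument spells out this uniqueness and the compactness step in more detail, but the idea is identical.
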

	
	\begin{proof}
	It is enough to notice that for a strongly permutative CA, the sequence $y\in \mathbb{Y}$ in the proof of Lemma \ref{lem:topological extension} is uniquely defined, so that $\phi:\mathbb Y\rightarrow X\times X^\N$ is a homeomorphism.
	\end{proof}

	We obtain the following formula concerning about the mean dimension of strongly permutative cellular automata.
	\begin{thm}\label{strong}
		Let $F$ be a strongly permutative cellular automaton as above, then 
		$\mdim(X^{\Z}, F)=\stab(X) \cdot \text{\rm diam}(I\cup\{0\}).$
	\end{thm}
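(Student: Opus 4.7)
The upper bound is already in hand: Proposition \ref{prop:upper bound} gives $\mdim(X^\Z,F)\le \stab(X)\cdot \diam(I\cup\{0\})$ for \emph{any} local rule $f$, and uses nothing about strong permutativity. So the whole task is to match this with a lower bound of the same value.

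For the lower bound my plan is to combine the conjugacy of Lemma \ref{lem:conjugate} with the skew-product estimate from Corollary \ref{cor:mdim X times X N}. I would split into cases according to which of the three options defines $(\mathbb Y,G)$. In the first case, where $I$ contains both a positive and a negative integer, we have $\mathbb Y=X^\Z$ and $G=F$, so Lemma \ref{lem:conjugate} says $\phi$ is a topological conjugacy $(X^\Z,F)\simeq (X\times (X^{J^*})^{\mathbb N},g)$. Thus
\begin{equation*}
\mdim(X^\Z,F)=\mdim\bigl(X\times (X^{J^*})^{\mathbb N},g\bigr)\ge \stab(X)\cdot \diam(I\cup\{0\})
\end{equation*}
directly by Corollary \ref{cor:mdim X times X N}, and combined with the upper bound this finishes the case.

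In the remaining two cases $I$ sits on a single side of $0$, so $(\mathbb Y,G)$ is either $(X^{\mathbb N},F^+)$ or $(X^{-\mathbb N},F^-)$. Here the point is that $(\mathbb Y,G)$ is a topological \emph{factor} of $(X^\Z,F)$ via the natural coordinate projection $\pi:X^\Z\to\mathbb Y$: when $I\subset \mathbb N$, for any $n\ge 0$ the indices $n+i$ with $i\in I$ are still non-negative, so $\pi\circ F=F^+\circ \pi$, and the symmetric statement holds on the other side. Since mean dimension does not increase under factor maps, I get
\begin{equation*}
\mdim(X^\Z,F)\ge \mdim(\mathbb Y,G)=\mdim\bigl(X\times (X^{J^*})^{\mathbb N},g\bigr)\ge \stab(X)\cdot \diam(I\cup\{0\}),
\end{equation*}
using again Lemma \ref{lem:conjugate} and Corollary \ref{cor:mdim X times X N}. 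Together with the upper bound, this yields the required equality in all cases.

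I do not expect a genuine obstacle: all the ingredients have been assembled in the preceding subsections. The only mildly delicate point is the bookkeeping for the one-sided cases, namely verifying that the coordinate projection really is a factor map (which is immediate from $I\subset \mathbb N$ or $I\subset -\mathbb N$), and keeping straight that Proposition \ref{prop:upper bound} is applied on $X^\Z$ while Lemma \ref{lem:conjugate} is applied on $\mathbb Y$.
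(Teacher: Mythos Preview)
Your first case is fine and matches the paper exactly. The gap is in the one-sided cases: you write ``since mean dimension does not increase under factor maps,'' but this is \emph{false} in general. For instance, if $\pi:[0,1]\to[0,1]^2$ is a space-filling curve then $\pi^{\Z}:\bigl([0,1]^{\Z},\sigma\bigr)\to\bigl(([0,1]^2)^{\Z},\sigma\bigr)$ is a factor map, yet the mean dimensions are $1$ and $2$ respectively. This is precisely why Section \ref{sec:background} lists monotonicity only for \emph{subsystems}, not for factors, and why in the algebraic case (Lemma \ref{lem:algebraic}) the paper has to invoke a special result for algebraic extensions rather than plain factor monotonicity.

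The repair is immediate and is exactly what the paper does: when $I\subset\N$ (the case $I\subset -\N$ is symmetric), identify $X^{\Z}\cong X^{\N}\times X^{-\N^*}$ and observe that $F$ acts as a skew-product over $(X^{\N},F^+)$, since the non-negative coordinates of $F(x)$ depend only on $(x_n)_{n\ge 0}$. Then Lemma \ref{lem:skew product} (not factor monotonicity) gives $\mdim(X^{\Z},F)\ge \mdim(X^{\N},F^+)$, and you continue as you wrote using Lemma \ref{lem:conjugate} and Corollary \ref{cor:mdim X times X N}. So the architecture of your argument is right; you just need to replace the appeal to a nonexistent general principle by the skew-product lemma already proved for this purpose.
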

	\begin{proof}
		When $I$ contains both negative and positive integers, the result follows from Proposition \ref{prop:upper bound}, Lemma \ref{lem:conjugate} and Corollary \ref{cor:mdim X times X N}. Now assume $I\subset \mathbb N$ (one deals similarly for the remaining case). Then $(\mathbb Y, G)=(X^{\mathbb N},F^+)$. Notice that $(X^{\mathbb Z}, F)$ is a skew-product over $(X^{\mathbb N},F^+)$, so that by Lemma \ref{lem:skew product}  		we have $\mdim(X^{\mathbb Z}, F)\geq \mdim(X^{\mathbb N},F^+)=\mdim(X\times X^\N, g)$. Then we conclude as in the previous case by using  Proposition \ref{prop:upper bound} and Corollary \ref{cor:mdim X times X N}. 
	\end{proof}

	\begin{rem}
	When the local rule does not depend on the zero coordinate, i.e. $0\notin I$, then 
	by Remark \ref{rem:nat skew product} the natural extension of the strongly permutative CA $F$ is topologically conjugated to the bilateral shift on $X^{J^*}$. In particular $F$ and its natural extension have the same mean dimension. 
	\end{rem}

	We will show in the next  section that  Theorem \ref{strong} does not holds true anymore for general permutative CA  by building  examples of permutative CA with intermediate mean dimension, i.e. with mean dimension strictly less than $\stab(X)\cdot\text{\rm diam}(I\cup\{0\})$. For permutative CA, we have $X_\infty=X$, therefore  Proposition \ref{ima} is useless to produce such examples.

		\subsection{Maximal mean dimension for near strongly permutative CA}	
	Let $\mathcal C(X)$ be the set of continuous maps from  a compact metrizable space $X$ to itself endowed with the topology of uniform convergence. A family $\mathcal F$ in $\mathcal C(X)$ is said to be \textit{$m$-expansive} if one of the equivalent  conditions is satisfied:
\begin{itemize}
\item there exists an open cover $\alpha$ of $X$ with $\mdim(X,T, \alpha)=\mdim(X, T)$ for every $T\in \FF$. Such a cover $\alpha$ is called a {\it generator} of $\FF$, 
\item there exist $\epsilon>0$ and a compatible metric $d$ with $\mdim(X, T,  d, \epsilon)=\mdim(X, T)$ for every $T\in \FF$, 
\item for all compatible metric $d$ there exists $\epsilon>0$ with $\mdim(X, T,  d, \epsilon)=\mdim(X, T)$ for every $T\in \FF$.
\end{itemize}

\begin{lem}\label{lem:m-expansive}
Let $\mathcal F$ be a $m$-expansive family. Then $T\mapsto \mdim(X, T)$ is upper semi-continuous on the closure of $\mathcal F$. 
\end{lem}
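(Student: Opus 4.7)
The plan is to deduce the statement from upper semi-continuity, for every fixed finite open cover $\alpha$, of the map $T\mapsto\mdim(X,T,\alpha)$ on $\mathcal C(X)$, and then use the generator provided by the $m$-expansive hypothesis. Fix any $\alpha$ that is a common generator for $\mathcal F$. The proof has three stages: (i) show that for each fixed $n\ge 1$ the function $T\mapsto D\left(\bigvee_{i=0}^{n-1}T^{-i}\alpha\right)$ is upper semi-continuous on $\mathcal C(X)$ (with its uniform topology); (ii) deduce that $T\mapsto\mdim(X,T,\alpha)$ is upper semi-continuous as an infimum of upper semi-continuous functions; (iii) combine with the identity $\mdim(X,T)=\mdim(X,T,\alpha)$ for $T\in\mathcal F$ and density of $\mathcal F$ in $\overline{\mathcal F}$.

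For stage (i), given $T\in\mathcal C(X)$ I would pick an open refinement $\mathcal B$ of $\bigvee_{i=0}^{n-1}T^{-i}\alpha$ with $\text{\rm ord}(\mathcal B)=D\left(\bigvee_{i=0}^{n-1}T^{-i}\alpha\right)$, and for each $B\in\mathcal B$ fix a tuple $(A_0^B,\ldots,A_{n-1}^B)\in\alpha^n$ with $T^i(B)\subset A_i^B$ for $0\le i<n$. Using normality of the compact metric space $X$, I would produce a shrinking $\mathcal B'=\{B':B\in\mathcal B\}$ of $\mathcal B$ into a finite open cover of $X$ with $\overline{B'}\subset B$ for every $B$. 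For each such $B'$ and each $i<n$, $T^i(\overline{B'})$ is a compact subset of the open set $A_i^B$. Since $T_k\to T$ uniformly forces $T_k^i\to T^i$ uniformly for every fixed $i<n$, there exists $k_0$ such that $T_k^i(\overline{B'})\subset A_i^B$ simultaneously for all $k\ge k_0$, all $i<n$ and all $B\in\mathcal B$ (a finite list of compact-in-open inclusions). Hence $\mathcal B'$ refines $\bigvee_{i=0}^{n-1}T_k^{-i}\alpha$ for $k\ge k_0$, and as passing to subsets can only decrease the order, $D\left(\bigvee_{i=0}^{n-1}T_k^{-i}\alpha\right)\le\text{\rm ord}(\mathcal B')\le\text{\rm ord}(\mathcal B)=D\left(\bigvee_{i=0}^{n-1}T^{-i}\alpha\right)$, yielding upper semi-continuity.

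For stage (ii), the sub-additivity recalled in Section \ref{sec:background} gives $\mdim(X,T,\alpha)=\inf_n D\left(\bigvee_{i=0}^{n-1}T^{-i}\alpha\right)/n$, so the map $T\mapsto\mdim(X,T,\alpha)$ is an infimum of upper semi-continuous functions, hence upper semi-continuous on $\mathcal C(X)$. For stage (iii), fix $T\in\overline{\mathcal F}$ and a sequence $T_k\in\overline{\mathcal F}$ with $T_k\to T$; by density of $\mathcal F$ in $\overline{\mathcal F}$, after replacing each $T_k$ by a sufficiently close element of $\mathcal F$ we may assume $T_k\in\mathcal F$. Then $\mdim(X,T_k)=\mdim(X,T_k,\alpha)$ by the generator property, and stage (ii) together with the trivial bound $\mdim(X,T,\alpha)\le\mdim(X,T)$ gives $\limsup_k\mdim(X,T_k)=\limsup_k\mdim(X,T_k,\alpha)\le\mdim(X,T,\alpha)\le\mdim(X,T)$, which is the asserted upper semi-continuity.

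The principal obstacle is the shrinking of $\mathcal B$ to $\mathcal B'$: one must simultaneously preserve the cover property, enforce $\overline{B'}\subset B$ (so that compactness supplies a uniform gap between $T^i(\overline{B'})$ and $X\setminus A_i^B$), and ensure that the order does not increase. All three are standard consequences of the normality of a compact metric space, but it is the point at which the qualitative uniform convergence $T_k\to T$ is converted into the quantitative inclusion $T_k^i(\overline{B'})\subset A_i^B$ that makes the whole argument work.
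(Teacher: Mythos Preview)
Your proof is correct and follows essentially the same route as the paper: both reduce to the upper semi-continuity of $T\mapsto D\bigl(\bigvee_{i=0}^{n-1}T^{-i}\alpha\bigr)$ for each fixed $n$, then pass to the infimum via subadditivity and finish with the generator together with the trivial bound $\mdim(X,T,\alpha)\le\mdim(X,T)$. The only cosmetic difference is in stage~(i): where you shrink the open refinement $\mathcal B$ to $\mathcal B'$ with $\overline{B'}\subset B$ by hand, the paper instead invokes the characterization $D(\beta)=\min_{\gamma}\text{\rm ord}(\gamma)$ over \emph{closed} covers $\gamma\succ\beta$ (Coornaert, Proposition~1.6.5) to obtain a minimizing closed cover directly---your family $\{\overline{B'}\}$ is precisely such a closed cover, so the two arguments coincide.
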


\begin{proof}
	Let $\alpha$ be a generator of $\FF$.
We will prove $T\mapsto \mdim(X, T,\alpha)$ is upper semi-continuous. The conclusion then follows : if $T_n\xrightarrow{n}T$ with $T_n\in \mathcal F$ for all $n$, then we get 
\begin{align*}
\mdim(X, T)&\geq \mdim(X, T,\alpha),\\
&\geq \limsup_{n\to \infty} \mdim(X, T_n,\alpha),\\
&\geq  \limsup_{n\to \infty} \mdim(X, T_n).
\end{align*}
To check the upper semi-continuity of $T\mapsto \mdim(X, T,\alpha)$, it is enough to see $f_n : T\mapsto D(\bigvee_{k=0}^{n-1}T^{-k}\alpha )$ is upper semi-continuous for any $n$. Indeed  we have $\mdim(X, T)=\inf_{n\in \N}\frac{f_n(T)}{n}$ by sub-additivity of the sequence $(f_n(T))_{n\in \N}$ and the infimum of a sequence of upper semicontinuous functions is itself upper semicontinuous. By  \cite[Proposition 1.6.5]{coornaert2015topological}, we have  $D(\bigvee_{k=0}^{n-1}T^{-k}\alpha )=\min\limits_{\gamma} \text{ord}(\gamma)$, where the minimum holds   over all closed covers $\gamma$ finer than $\bigvee_{k=0}^{n-1}T^{-k}\alpha$. Let $\gamma_n$ be such a cover realizing the minimum. It follows that $\gamma_n$ is finer than $\bigvee_{k=0}^{n-1}T_m^{-k}\alpha$ for $m$ large enough, so that $D(\bigvee_{k=0}^{n-1}T_m^{-k}\alpha )\leq 
\text{ord}(\gamma_n)=D(\bigvee_{k=0}^{n-1}T^{-k}\alpha )$ for $m$ large enough. It implies that $f_n$ is upper  semicontinuous function for every $n\in \N$.
\end{proof}

When a $m$-expansive family consists of a single map $T$, we say that the map $T$ is {\it $m$-expansive}.

\begin{lem}[\cite{tsukamoto2019mean}, Lemma 3.1 and Theorem 2.5]\label{tsu}
The shift map $\sigma$ on $X^\mathbb N$ is $m$-expansive. Moreover there is a generator $\alpha$ of the form $\alpha=\mathcal U\times X^{\mathbb{N}^*}$ for some open cover $\mathcal U$ of $X$.
\end{lem}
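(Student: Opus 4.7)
The plan is to establish the ``moreover'' clause directly, since exhibiting a generator is one of the listed equivalent conditions defining $m$-expansiveness. Thus it suffices to produce an open cover $\mathcal{U}$ of $X$ such that $\alpha := \mathcal{U} \times X^{\mathbb{N}^*}$ satisfies $\mdim(X^\mathbb{N}, \sigma, \alpha) = \stab(X)$; the identification $\mdim(X^\mathbb{N}, \sigma) = \stab(X)$ is Tsukamoto's Theorem 1.1, already invoked in Corollary \ref{cor:mdim X times X N}.

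\textbf{Step 1 (reduction to a dimension computation on $X^n$).} For $\alpha = \mathcal{U} \times X^{\mathbb{N}^*}$, the set $\sigma^{-k}\alpha$ is the pullback of $\mathcal{U}$ under the $k$-th coordinate projection $p_k\colon X^\mathbb{N}\to X$, hence
$$\bigvee_{k=0}^{n-1}\sigma^{-k}\alpha \;=\; \pi_n^{-1}(\mathcal{U}^n),$$
where $\pi_n\colon X^\mathbb{N}\to X^n$ is the projection onto the first $n$ coordinates. Since $\pi_n$ is a continuous surjection admitting the continuous section $(y_0,\ldots,y_{n-1})\mapsto (y_0,\ldots,y_{n-1},x_0,x_0,\ldots)$ for a fixed $x_0\in X$, a pull-back/push-forward argument on refinements yields $D(\pi_n^{-1}\mathcal{U}^n)=D(\mathcal{U}^n)$. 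The task therefore reduces to finding an open cover $\mathcal{U}$ of $X$ with $\lim_n D(\mathcal{U}^n)/n = \stab(X)$.

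\textbf{Step 2 (constructing $\mathcal{U}$ via the $\epsilon$-dimension).} I would use the characterization
$$\mdim(X^\mathbb{N},\sigma) \;=\; \sup_{\epsilon>0}\lim_{n\to\infty}\frac{\dim_\epsilon(X^\mathbb{N}, d_n)}{n}$$
for the product metric $d(x,y)=\sup_i 2^{-i}\rho(x_i,y_i)$ coming from some compatible metric $\rho$ on $X$. With this choice $\dim_\epsilon(X^\mathbb{N},d_n)$ is comparable to $\dim_\epsilon(X^n)$ (in the sup metric) up to an additive error $O(\log\tfrac{1}{\epsilon})$, so its normalized limit equals $\stab(X)$. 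Fix $\epsilon>0$ and choose $\mathcal{U}$ to be a finite open cover of $X$ of $\rho$-mesh less than $\epsilon/2$. The upper bound $D(\mathcal{U}^n)\leq \dim(X^n)$ is immediate from the definition, while the matching lower bound $D(\mathcal{U}^n)\geq \dim_\epsilon(X^n)$ comes from the nerve theorem: any open refinement $\mathcal{V}$ of $\mathcal{U}^n$ still has mesh less than $\epsilon$, so the canonical map $X^n\to |\mathrm{Nerve}(\mathcal V)|$ into the geometric realization of its nerve is $(\rho^n_\infty,\epsilon)$-injective, forcing $\mathrm{ord}(\mathcal V)\geq \dim_\epsilon(X^n)$. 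Dividing by $n$, letting $n\to\infty$ and then $\epsilon\to 0$ along a refining sequence of covers, a diagonal extraction produces a single $\mathcal{U}$ with $\lim_n D(\mathcal{U}^n)/n=\stab(X)$.

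\textbf{Main obstacle.} The delicate step is extracting one open cover $\mathcal{U}$ of $X$, independent of $n$, whose powers $\mathcal{U}^n$ saturate the sequence $\dim(X^n)/n$ down to its limit $\stab(X)$. Sub-additivity $D(\mathcal{U}^{n+m})\leq D(\mathcal{U}^n)+D(\mathcal{U}^m)$ gives existence of the limit but not its value, so the nerve-theorem comparison $D(\mathcal{U}^n)\geq \dim_\epsilon(X^n)$ for a fixed small-mesh $\mathcal{U}$ is the critical ingredient. This, together with the diagonal refinement needed to transfer the bound across all scales $\epsilon$, is the technical heart of Tsukamoto's Lemma 3.1 and Theorem 2.5.
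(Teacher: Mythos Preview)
The paper itself gives no proof of this lemma; it is simply quoted from \cite{tsukamoto2019mean}. So the comparison is with Tsukamoto's argument rather than with anything in the present paper.

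Your Step~1 is correct, and the nerve--theorem inequality $D(\mathcal U^n)\ge \dim_\epsilon(X^n)$ for a cover $\mathcal U$ of mesh $<\epsilon$ is also correct. The genuine gap is the final move: ``letting $\epsilon\to 0$ along a refining sequence of covers, a diagonal extraction produces a single $\mathcal U$.'' There is no diagonal extraction available here. Open covers of $X$ do not admit limits, and refining $\mathcal U$ changes the cover. What your argument actually yields is
\[
\sup_{\epsilon>0}\ \lim_{n\to\infty}\frac{D(\mathcal U_\epsilon^{\,n})}{n}\;=\;\stab(X),
\]
which is precisely the statement $\mdim(X^{\mathbb N},\sigma)=\stab(X)$, i.e.\ Tsukamoto's Theorem~1.1 that you already invoked---\emph{not} $m$-expansiveness. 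The whole content of the lemma is that the supremum over $\epsilon$ is \emph{attained} at some positive $\epsilon_0$, equivalently that $\lim_n \dim_{\epsilon_0}(X^n)/n=\stab(X)$ for a fixed $\epsilon_0>0$. Nothing in your sketch establishes this.

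Tsukamoto's route is different in kind: rather than squeezing $D(\mathcal U^n)$ between $\dim_\epsilon(X^n)$ and $\dim(X^n)$ and hoping the gap closes, he exploits the dichotomy (recalled in Section~\ref{sec:background}) that a finite-dimensional compact metric space is either of basic type ($\stab X=\dim X$) or exceptional type ($\stab X=\dim X-1$), and in each case builds an \emph{essential} map from $X$ (or $X^2$) to a cube of the correct dimension. Products of essential maps remain essential, which gives directly $D(\mathcal U^n)\ge n\cdot\stab(X)$ for one fixed cover $\mathcal U$ pulled back from the cube. That is the step your outline is missing.
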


\begin{rem}This concept of $m$-expansiveness is inspired by the notion of $h$-expansiveness relative to the entropy. We may build examples of non $m$-expansive systems with any  given mean dimension as\ follows.  Consider  a sequence $(X_n, T_n)_{n\in \N}$ of topological systems such that $\mdim(X_n,T_n)$ is strictly increasing (either bounded or divergent to infinity). Then the one point  compactification $(X,T)$ of the disjoint union of $(X_n,T_n)$ by a $T$-fixed point at the infinity  is not $m$-expansive. 
\end{rem}

 For $N\in \mathbb{N}$ we denote by $\mathcal F_N(X)$ (resp. $\mathcal G_N(X)$ ) the family of strongly permutative CA  (resp. all CA) with domain $I$ contained in $[-N,N]$.

\begin{lem}\label{mexpex}
With the above notations, the family $\mathcal F_N(X)$ is $m$-expansive.
\end{lem}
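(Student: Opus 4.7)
The plan is to exhibit a single finite open cover $\alpha$ of $X^\Z$ that is a generator for every $T\in\mathcal{F}_N(X)$; by the first of the three equivalent definitions this gives the $m$-expansiveness of $\mathcal{F}_N(X)$.

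For each nonempty $K\subset[-N,N]\setminus\{0\}$ I apply Lemma \ref{tsu} to the compact metric space $X^K$: this yields a finite open cover $\mathcal{V}_K$ of $X^K$ such that $\mathcal{V}_K\times(X^K)^{\N^*}$ is a generator for the shift $\sigma:(X^K)^\N\circlearrowleft$. I then set
$$
\alpha\ :=\ \bigvee_{\emptyset\neq K\subset[-N,N]\setminus\{0\}}\pi_K^{-1}(\mathcal{V}_K),
$$
a finite open cover of $X^\Z$ depending only on the coordinates in $[-N,N]\setminus\{0\}$, where $\pi_K:X^\Z\to X^K$ denotes the projection. Since there are only finitely many such $K$, the join is finite.

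Now fix $T\in\mathcal{F}_N(X)$ and set $K:=J^*_T\subset[-N,N]\setminus\{0\}$; it suffices to show that $\pi_K^{-1}(\mathcal{V}_K)$ alone is already a generator for $T$, as $\alpha$ is finer. An immediate unwinding of definitions shows that $\pi_K^{-1}(\mathcal{V}_K)$, viewed as a cover of $\mathbb{Y}$, is the pullback via $\phi_T$ of the product cover $X\times(\mathcal{V}_K\times(X^K)^{\N^*})$ of $X\times(X^K)^\N$. By Lemma \ref{lem:conjugate}, $\phi_T$ identifies $(\mathbb{Y},G)$ with the skew-product $(X\times(X^K)^\N,g_T)$ over the base $((X^K)^\N,\sigma)$, so Lemma \ref{lem:skew product} combined with the defining property of $\mathcal{V}_K$ yields
$$
\mdim(\mathbb{Y},G,\pi_K^{-1}(\mathcal{V}_K))\ \geq\ \mdim((X^K)^\N,\sigma)\ =\ \sharp K\cdot\stab X.
$$
When $\mathbb{Y}\neq X^\Z$ (i.e.\ $I$ lies entirely on one side of $0$), the skew-product structure of $(X^\Z,F)$ over $(\mathbb{Y},G)$ invoked in the proof of Theorem \ref{strong} together with another application of Lemma \ref{lem:skew product} transfers the same lower bound to $X^\Z$. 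Proposition \ref{prop:upper bound} then provides the matching upper bound $\mdim(X^\Z,T)\leq\sharp K\cdot\stab X$, so $\pi_K^{-1}(\mathcal{V}_K)$, and hence $\alpha$, attains $\mdim(X^\Z,T)$.

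The only subtlety is that the index $K=J^*_T$ varies with $T$, so Tsukamoto's generator lives a priori on a $T$-dependent space; the point is that all such $K$ lie in the fixed finite set $[-N,N]\setminus\{0\}$, which is exactly what permits combining finitely many pulled-back covers into the single cover $\alpha$. The case split of Lemma \ref{lem:conjugate} (two-sided versus one-sided $I$) is routine and entirely parallel to the corresponding case split in the proof of Theorem \ref{strong}.
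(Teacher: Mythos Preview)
Your argument is correct and is essentially the paper's own proof: pull back Tsukamoto's generator for the shift on $(X^{J^*})^{\mathbb N}$ through the conjugacy $\phi$ of Lemma \ref{lem:conjugate}, apply Lemma \ref{lem:skew product} to the skew-product $g$ over the shift to see that the pulled-back cover already attains $\stab(X)\cdot\sharp J^*$, and then join finitely many such covers over the possible $J^*\subset[-N,N]\setminus\{0\}$. The only cosmetic differences are that the paper joins over intervals $J\subset[-N,N]$ (equivalently over the sets $J^*$ that actually occur) rather than over all nonempty $K\subset[-N,N]\setminus\{0\}$, and that the paper leaves the one-sided case $\mathbb Y\neq X^{\mathbb Z}$ implicit whereas you spell out the extra application of Lemma \ref{lem:skew product}; neither changes the substance.
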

\begin{proof}Let $J$ be an interval of integers contained in $[-N,N]$.
By Lemma \ref{tsu} there is a generator $\alpha=\mathcal U\times (X^{J^*})^{\mathbb{N}^*}$ of the shift map on $(X^{J^*})^{\mathbb N}$ for some cover $\mathcal U$ of $X^{J^*}$. Let $\beta =\mathcal U\times X^{\mathbb Z^*}$ and $\gamma=X\times \alpha$ be the induced covers of $X^{\mathbb Z}$ and $X\times (X^{J^*})^\mathbb{N}$ respectively. Let $F$ be a strongly permutative CA with $J$ being the convex hull of its domain $I$. We claim that $\beta=\beta_J$ is a generator of $F$. Indeed the conjugacy $\phi$ sends $\beta$ to $\gamma$, therefore $\mdim(F,\beta)=\mdim(g,\gamma)$. But $g$ being a skew-product overs $\sigma$, we have by Lemma \ref{lem:skew product} 
$\mdim(g,\gamma)\geq \mdim(\sigma, \alpha)=\mdim(F)$. 
Finally we observe that $\bigvee_{J}\beta_J$, where the joining holds over all intervals  of integers   $J\subset [-N,N]$,  is a generator of $\mathcal F_N(X)$.

\end{proof}

\begin{ques}
Is the larger family $\mathcal G_N(X)$ also $m$-expansive? 
\end{ques}

A CA on $X^\mathbb Z$ is said {\it near strongly permutative} when it belongs to the closure of $\mathcal F_N(X)$ for some $N$.  For example if $g$ and $h$ are surjective  non-injective monotone interval maps, then the CA with local rule $f:[0,1]^2\rightarrow [0,1]$, defined by $f(x_0,x_1)=g(x_0)+h(x_1)$, is near strongly permutative but not
strongly permutative.

\begin{cor}\label{near}
Let $F$ be a near strongly permutative CA. Then $$\mdim(F)=\stab(X)\cdot \text{\rm diam}(I\cup\{0\}).$$
\end{cor}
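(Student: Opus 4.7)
The approach is to bracket $\mdim(F)$ between the general upper bound from Proposition \ref{prop:upper bound} and a matching lower bound obtained from the strongly permutative case via upper semi-continuity of mean dimension on the $m$-expansive family $\mathcal F_N(X)$. The upper bound is immediate from Proposition \ref{prop:upper bound} applied to $F$, namely $\mdim(F) \le \stab(X) \cdot \diam(I \cup \{0\})$.

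For the lower bound, by the very definition of near strong permutativity there exist $N \in \N$ and a sequence of strongly permutative CAs $(F_n)_n$ in $\mathcal F_N(X)$, with respective local rules $f_n : X^{I_n} \to X$ and $I_n \subset [-N,N]$, converging to $F$ uniformly on $X^\Z$. Theorem \ref{strong} applied to each $F_n$ gives
\[
\mdim(F_n) = \stab(X) \cdot \diam(I_n \cup \{0\})
\]
for every $n$. By Lemma \ref{mexpex} the family $\mathcal F_N(X)$ is $m$-expansive, so by Lemma \ref{lem:m-expansive} the function $T \mapsto \mdim(X^\Z, T)$ is upper semi-continuous on the closure of $\mathcal F_N(X)$. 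Since $F$ lies in this closure, we obtain
\[
\mdim(F) \ge \limsup_{n \to \infty} \mdim(F_n) = \stab(X) \cdot \limsup_{n \to \infty} \diam(I_n \cup \{0\}).
\]

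The remaining step, which I expect to be the main (though mild) technical point, is to establish $\limsup_n \diam(I_n \cup \{0\}) \ge \diam(I \cup \{0\})$, where $I$ is taken to be the minimal domain of $F$. The idea is that uniform convergence prevents the approximating domains from genuinely shrinking: if $F$ depends on a coordinate $j$, meaning there exist $y, y' \in X^\Z$ agreeing off $\{j\}$ with $F(y)_0 \ne F(y')_0$, then $j \in I_n$ for every $n$ large enough, since otherwise the identity $F_n(y)_0 = F_n(y')_0$ would pass to the limit and force $F(y)_0 = F(y')_0$, a contradiction. Applying this to $j = \min I$ and $j = \max I$ yields $I \subset I_n$ for all large $n$, hence $\diam(I_n \cup \{0\}) \ge \diam(I \cup \{0\})$. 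Combined with the upper bound, this delivers the claimed equality.
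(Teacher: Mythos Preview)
Your approach is correct and coincides with the paper's: the paper's proof simply cites Lemma~\ref{mexpex}, Lemma~\ref{lem:m-expansive} and Theorem~\ref{strong}, which is precisely the bracketing argument you describe---Proposition~\ref{prop:upper bound} for the upper bound and upper semi-continuity of $T\mapsto\mdim(T)$ on the closure of the $m$-expansive family $\mathcal F_N(X)$, combined with Theorem~\ref{strong} for the approximants, for the lower bound.

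You go further than the paper by explicitly addressing the comparison between $\diam(I\cup\{0\})$ and $\limsup_n\diam(I_n\cup\{0\})$, a point the paper leaves implicit. Your argument for this is correct: if $F$ genuinely depends on coordinate $j$ (witnessed by $y,y'$ agreeing off $\{j\}$ with $F(y)_0\neq F(y')_0$), then any subsequence of $(F_n)$ with $j\notin I_n$ would satisfy $F_n(y)_0=F_n(y')_0$, and passing to the limit yields a contradiction; hence $j\in I_n$ eventually. Applied to the extremal coordinates of the minimal domain $I$, this gives $\diam(I_n\cup\{0\})\geq\diam(I\cup\{0\})$ for large $n$, as needed. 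Note that this step indeed requires $I$ to be interpreted as the minimal domain of $F$, an assumption implicit in the statement (and necessary, since otherwise the equality could fail by Proposition~\ref{prop:upper bound} applied with a smaller domain).
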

\begin{proof}
	It follows from Lemma \ref{mexpex}, Lemma \ref{lem:m-expansive} and Theorem \ref{strong}.
\end{proof}

	\subsection{Infinite topological entropy}
	
	\begin{lem}\label{entropy}		Assume $X$ is infinite. Then any permutative CA on $X^{\mathbb Z}$ has infinite topological entropy. 	\end{lem}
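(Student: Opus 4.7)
The plan is to exhibit a shift on an infinite alphabet as a topological factor of $F$, which immediately forces infinite entropy. Since $F$ is permutative and acts in a non-trivial way on the coordinates, $J^*$ is a nonempty finite subset of $\Z$, hence $X^{J^*}$ is an infinite compact metric space (as $X$ is infinite).

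First I would invoke Lemma \ref{lem:topological extension} to obtain a factor map $\phi:(\mathbb{Y},G)\to (X\times (X^{J^*})^{\N},g)$. When $I$ contains both positive and negative integers, $(\mathbb{Y},G)=(X^{\Z},F)$ and this already gives a factor of $F$. In the one-sided cases, say $I\subset \N$, the coordinate projection $\pi_{\N}:X^{\Z}\to X^{\N}$ satisfies $F^{+}\circ \pi_{\N}=\pi_{\N}\circ F$, so $(X^{\N},F^{+})$ is a topological factor of $(X^{\Z},F)$; composing with $\phi$ still yields a factor map from $(X^{\Z},F)$ onto $(X\times (X^{J^*})^{\N},g)$. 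The case $I\subset -\N$ is symmetric.

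Next, inspecting the formula $g(x,(y^{n}))=(f(z),\sigma y)$, one sees that the second-coordinate projection $(x,y)\mapsto y$ is a factor map from $g$ onto the full shift $\sigma$ on $(X^{J^*})^{\N}$. Composing with the factor map from the previous step exhibits $((X^{J^*})^{\N},\sigma)$ as a topological factor of $(X^{\Z},F)$. Since topological entropy is non-increasing under factor maps, the topological entropy of $F$ is bounded below by that of $\sigma$ on $(X^{J^*})^{\N}$.

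Finally, I would observe that the shift on $Y^{\N}$ for any infinite compact metric space $Y$ has infinite topological entropy: for any $n\in \N$ and any choice of $n$ distinct points $y_{1},\dots,y_{n}\in Y$, the subsystem $\{y_{1},\dots,y_{n}\}^{\N}$ is a closed $\sigma$-invariant subset topologically conjugate to the $n$-symbol full shift, with entropy $\log n$, and letting $n\to \infty$ gives the claim. Applied to $Y=X^{J^*}$, this completes the proof. The argument is routine once Lemma \ref{lem:topological extension} is available; the only mild care needed is the case analysis on the sign structure of $I$, which is itself not a genuine obstacle.
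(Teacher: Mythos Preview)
Your argument is correct and follows essentially the same route as the paper: factor through $g$ via the surjectivity of $\phi$ (Lemma~\ref{lem:topological extension}), then project onto the full shift on $(X^{J^*})^{\N}$, and use that a shift over an infinite alphabet has infinite entropy. You are in fact more careful than the paper's three-line proof, which writes ``$h_{top}(F)\ge h_{top}(g)$'' without spelling out the one-sided case; your observation that $(X^{\N},F^{+})$ is itself a factor of $(X^{\Z},F)$ via $\pi_{\N}$ fills that gap cleanly.
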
	
		\begin{proof}		The factor map $\phi$ of Subsection \ref{previ} is surjective, therefore $h_{top}(F)\geq h_{top}(g)$. But $g$ is an extension of the unilateral full shift on $\left(X^{J^*}\right)^{\mathbb N}$ which has infinite entropy, therefore $h_{top}(F)=+\infty$. 	\end{proof}

However a cellular automaton  with infinite set of states and surjective local rule may have finite, even zero,  topological entropy. 
\begin{ex}
	Let $X=\{0,1,\frac{1}{2},\frac{1}{3}, \cdots \}$. Let $I=\{-1,0,1\}$. Define a surjective continuous map $f: X^I \to X$ by $f(x_{-1,}, x_0, x_1)=\max\{x_{-1,}, x_0, x_1 \}$. Notice that $F^n(x)\xrightarrow{n} a^{\Z}$ with $a=\max_{k\in \Z}x_k$ for every $x\in X^{\Z}$. Therefore  the topological entropy of $(X^\Z, T_f)$ is zero.
\end{ex}

	
	
\section{Unit CA }\label{sec:Unit CA}

In this section, we consider one-dimensional  CA  with domain $I=\{1\}$ and continuous rule $f:X\circlearrowleft$, i.e. 
	\begin{align}\label{form}
	F((x_k)_{k\in \Z})=(f(x_{k+1}))_{k\in \Z}.
	\end{align}
	We  call such CA a {\it unit CA}.

\subsection{Non-wandering set of unit CA's}	
	
	For unit CA's one may wonder if $NW(F)\subset NW(f)^{\mathbb Z}$. This is false in general. For example if one considers a North-South invertible dynamic on the circle $\mathbb S^1$, $NW(f)$ is reduced to the two poles and therefore  $h_{top}(F)\leq \log 2$, in particular $\mdim(F)=0$. But $f$ being invertible, we have  
	$\mdim( F)=1$ by Theorem \ref{strong}. When the North-South dynamic $f$ is moreover assumed to be smooth, the non-wandering set is the whole set $(\mathbb S^1)^{\mathbb Z}$. Indeed if $\mu$ is any probability measure on $\mathbb S^1$,  one easily checks that  $\prod_{k\in \mathbb Z} f^{-k}\mu$ is invariant by $F$, in particular its support is contained in the non-wandering set. If $\mu$ is the Lebesgue measure,  $\prod f^{-k}\mu$ has full support in $(\mathbb S^1)^{\mathbb Z}$ for a diffeomorphism $f$.

	For a unit CA $T_f$, we have $X_\infty=\bigcap_nf^n(X)$. Moreover the restriction of $F$ to $X_\infty^{\mathbb Z}$ is permutative. In particular it has infinite topological entropy if and only if $\sharp X_\infty=+\infty$ by Lemma \ref{entropy}. 
	If $X_\infty$ is finite then there are finitely many periodic orbits attracting all the points of $X$. When $X$ is connected so is $X_\infty$. Therefore in this case $\sharp X_\infty<\infty$ is equivalent to $(X,f)$ has an attracted fixed point with full basin.
	
\subsection{Upper bound on the mean dimension}		
		
\begin{prop}\label{inequnit}
Let $T_f$ be a unit  CA  with local rule $f:X\circlearrowleft$.
Then we have $$\mdim(X^\Z, T_f)\leq \stab(\widetilde{X_f}).$$
\end{prop}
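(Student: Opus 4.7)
\emph{Plan.} The strategy is to exhibit $(X^\Z, T_f)$ as a topological factor of the unit CA $(\widetilde{X_f}^{\Z}, T_{\widetilde f})$ whose local rule $\widetilde f$ is a homeomorphism, and then invoke Theorem \ref{strong} to evaluate the mean dimension of the extension. Monotonicity of mean dimension under factor maps will then yield the stated bound.

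First, I would reduce to the surjective case. By Proposition \ref{ima} one may replace $X$ by $X_\infty=\bigcap_n f^n X$, since this changes neither the mean dimension of $T_f$ nor the space $\widetilde{X_f}$ (which is already built from $X_\infty$ by definition). Thus we may assume $f:X\circlearrowleft$ is surjective, so that $\widetilde{X_f}=\varprojlim(X\xleftarrow{f}X\xleftarrow{f}\cdots)$ is a genuine inverse limit on which $\widetilde f$ is a homeomorphism, and the coordinate projection $p_0:\widetilde{X_f}\to X$, $(y_k)_k\mapsto y_0$, is continuous, surjective, and satisfies $f\circ p_0=p_0\circ\widetilde f$.

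Next, applying $p_0$ coordinate-wise produces a continuous surjection $\pi:\widetilde{X_f}^{\Z}\to X^{\Z}$, and one checks immediately that $\pi\circ T_{\widetilde f}=T_f\circ\pi$ (both send $(\widetilde y_m)_{m\in\Z}$ to the bi-sequence $(f(\widetilde y_{k+1,0}))_{k\in\Z}$). Hence $(X^{\Z},T_f)$ is a topological factor of $(\widetilde{X_f}^{\Z}, T_{\widetilde f})$, and so $\mdim(X^{\Z},T_f)\le\mdim(\widetilde{X_f}^{\Z}, T_{\widetilde f})$ by monotonicity of mean dimension under factor maps. Since $\widetilde f$ is a homeomorphism, the unit CA $T_{\widetilde f}$ (with $I=\{1\}$) is strongly permutative in the sense of Section \ref{sec:Permutative CA}, and Theorem \ref{strong} evaluates its mean dimension to $\stab(\widetilde{X_f})\cdot\diam(\{0,1\})=\stab(\widetilde{X_f})$, completing the argument.

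The only conceptually nontrivial step is choosing the right extension: although $p_0$ points from $\widetilde{X_f}$ to $X$ (the ``wrong direction'' to lift $f$ itself), the semi-conjugacy relation $f\circ p_0=p_0\circ\widetilde f$ makes its coordinate-wise version into a factor map between the CAs, which lets us bootstrap from the already-treated strongly permutative case. Once this observation is made, the rest is a routine combination of Proposition \ref{ima} and Theorem \ref{strong}.
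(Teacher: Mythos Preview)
Your argument has a genuine gap: mean dimension is \emph{not} monotone under factor maps. The inequality $\mdim(X^{\Z},T_f)\le\mdim(\widetilde{X_f}^{\Z}, T_{\widetilde f})$ does not follow from the fact that the former is a factor of the latter. Indeed, the paper's own Proposition~\ref{prop:example natural} furnishes a counterexample to the general principle you invoke: the surjective system $(X_C,T_C)$ has mean dimension $\ge 1$, while its natural extension (which factors onto it) has mean dimension $0$. The background section lists monotonicity for \emph{subsystems} and subadditivity for products, but no factor inequality; and the need for the algebraic hypothesis in Lemma~\ref{lem:algebraic} (via \cite{li2018mean}) is precisely because no such general inequality is available.

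In fact, by Proposition~\ref{natunit} the system $(\widetilde{X_f}^{\Z}, T_{\widetilde f})$ is conjugate to the natural extension of $(X^{\Z},T_f)$, so Proposition~\ref{nat} gives the \emph{reverse} inequality $\mdim(\widetilde{X_f}^{\Z}, T_{\widetilde f})\le \mdim(X^{\Z},T_f)$. Your factor map therefore points the wrong way for the bound you want, and the approach cannot be repaired by tweaking the map. The paper's proof proceeds entirely differently: it works directly with open covers of $X^{\N}$, using that elements of $\widetilde{X_f}$ sit inside $X^{\N}$ to construct, for each generating cover $\mathcal V$, refinements of $\bigvee_{k=0}^{n-1}T_f^{-k}\mathcal V$ whose order is controlled by $n\cdot\dim(\widetilde{X_f})$; the upgrade from $\dim$ to $\stab$ then comes from passing to $n$-fold products.
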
	

	\begin{proof}
	
We first prove $\mdim(T_f)\leq \dim(\widetilde{X_f})$. It is enough to consider the unilateral action $T_f^+$ on $X^\N$. To simplify the notations we write here $T_f$  for $T_f^+$.	Let $\mathcal U$ be an open cover of $X$ and $\mathcal V=\mathcal V(\mathcal U, M)$ be the induced cover of $X^\mathbb N$ given by open sets of the form $\{(x_k)_{k\in \N}\in X^\mathbb{N}: \ x_l\in U_l \text{ for  }0\le l\leq M\}$ with some $U_0,\cdots, U_M\in \mathcal U$. To conclude it is enough to prove
\begin{equation}
\lim_{n\to \infty}\frac{D(\bigvee_{k=0}^{n-1} T_f^{-k}\mathcal V )}{n}\leq \dim(\widetilde{X_f})
\end{equation}
for open covers $\mathcal V$ of the previous form, because they may have arbitrarily small diameter. 

We identify $\widetilde{X_f}$ as a closed subspace of $X^\N$. We consider an open  cover $\widetilde{\mathcal V}$ of $\widetilde{X_f}$ finer than $\mathcal V\cap \widetilde{X_f}$ with $\text{ord}(\widetilde{\mathcal V})\leq\dim(\widetilde{X_f})$.  For a cover $\mathcal A$, we let $\text{cl}(\mathcal A)$  be the associated  closed cover $\text{cl}(\mathcal A):=  \{\overline{A}: \ A\in \mathcal A\}$. By Corollary 1.6.4 in \cite{coornaert2015topological} we may  assume $\text{ord}(\text{cl}(\widetilde{\mathcal V}))=\text{ord}\left( \widetilde{\mathcal V} \right)$ without loss of generality. Any $W\in \widetilde{\mathcal V}$ may be written as $W=O_W\cap \widetilde{X_f}$ for some open subset $O_W$ of $X^\mathbb N$ in such a way that the family $\mathcal W=\{O_W: \ W\in \widetilde{\mathcal V}\}$ is finer than $\mathcal V$ and satisfies $\text{ord}(\text{cl}(\mathcal W))\leq \text{ord}(\widetilde{\mathcal V})$. Indeed if this last condition could not be satisfied, there would be a point $x\in \widetilde{X_f}$ with $\sum_{W\in \mathcal V}1_{\overline{W}}(x)-1>\text{ord}(\widetilde{\mathcal V})= \text{ord}(\text{cl}(\widetilde{\mathcal V}))$. Then by letting  $N$ large enough, we may ensure that :
\begin{itemize}
\item  $\text{ord}(\text{cl}(\mathcal W_N))\leq \text{ord}(\text{cl}(\mathcal W))\leq \text{ord}(\widetilde{\mathcal V})$ where $\mathcal W_N$  is the family of open subsets  of $X^N$ consisting in the $N$-first coordinates projections $\pi_{N}(O_W)$, $W\in \widetilde{\mathcal V}$ ;
\item  $\mathcal W$ 
is  covering the compact set $X_N:=\{(x_k)_{k\in \N}\in X^\N: \ f(x_{k+1})=x_{k} \text{ for }k=0,\cdots, N-1 \}$ since we have $\widetilde{X_f}=\bigcap_{n\in \N}X_n$. 
\end{itemize}
 Fix such a $N>M$.  Let $\mathcal X_n$ be the cover of $X^\mathbb N$ given by the sets $$F_E:=\{(x_k)_{k\in \N}\in X^\N: \ (f^{l}x_l,\cdots, f^{l-N+1}x_l)\in E_l, \ N\leq l<n\}$$ for $E_{N}, \cdots, E_{n-1}\in \mathcal{W}_N$.  Clearly $\text{ord}(\mathcal X_n)\leq (n-N)\text{ord}(\mathcal W_N)\leq (n-N)\dim(\widetilde{X_f})$. We check now $\mathcal X_n$ is finer than $\bigvee_{k=0}^{n-M}T_f^{-k}\mathcal V$, which will  imply $\mdim(T_f)\leq \dim(\widetilde{X_f})$. Let $E_l$, $N\leq l<n$, be in $\mathcal W_N$ and let $F_E$ be the corresponding element of $\mathcal X_n$. We will show that $F_E$ is contained in some element of $\bigvee_{k=0}^{n-M}T_f^{-k}\mathcal V$, i.e. for any $0\leq k<n-M$ and $0\leq l\leq M$ there is $U\in \mathcal U$ such that $f^k x_{k+l}$ lies in $U$ for all $x\in F_E$.  Note that  $(f^ix_{k+l})_{ i=k+l,\cdots,k+ l-N+1}$ lies in some $E_{k+l}\in \mathcal W_N$. Since $\mathcal{W}\succ \mathcal V$, there is $U\in \mathcal U$ such that $f^{k}x_{k+l}$ lies in $U$ for all $x\in F_E$. 

Let us show now $\mdim(T_f)\leq \stab(\widetilde{X_f})$. For $n\in \mathbb N^*$ we consider the direct $n$-product $T_f^{\times n}:=\underbrace{T_f\times \cdots \times T_f}_{n \text{ times}}$. This product is conjugated to $T_{f^{\times n}}$ with $f^{\times n}:X^n\circlearrowleft$, $(x_1,\cdots, x_n)\mapsto (f(x_1),\cdots , f(x_n))$. Moreover the natural extension of $f^{\times n}$ is conjugated to the direct $n$-product of 
$(\widetilde{X_f}, \widetilde{T_f})$, in particular $\dim(\widetilde{X_{f^{\times n} }})=\dim(\widetilde{X_f}^n)$. Therefore we get 
\begin{align*}
\mdim(T_f)&\leq \lim_n\frac{\mdim(T_f^{\times n})}{n},\\
&\leq \lim_n\frac{\mdim(T_{f^{\times n}})}{n},\\
&\leq \lim_n\frac{\dim(\widetilde{X_{f^{\times n} } })}{n},\\
&\leq \lim_n\frac{\dim(\widetilde{X_{f }}^n)}{n}= \stab(\widetilde{X_f}).
\end{align*}
This completes the proof. 

	\end{proof}

	\subsection{Natural Extension }	
The natural extension of a unit CA is again a unit CA. More precisely we  have :

\begin{prop}\label{natunit}
Let $f:X\circlearrowleft$ be a topological dynamical system and let $T_f$ be the associated unit cellular automaton. Then the natural extension   $(\widetilde{X^{\Z}_{T_f}}, \widetilde{T_f})$ is topologically conjugated to $(\widetilde{X_f}^\Z, T_{\widetilde{f}})$.
\end{prop}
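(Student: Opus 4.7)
The plan is to write down an explicit bijection between the two inverse-limit-like spaces, verify the compatibility of coordinates by hand, and check equivariance. First I would unpack what each side looks like. Since $T_f^n(X^\Z)=(f^n(X))^\Z$, we have $\bigcap_{n\ge 0}T_f^n(X^\Z)=X_\infty^\Z$, so a point of $\widetilde{X^\Z_{T_f}}$ is a family $(y^{(k)})_{k\in\N}$ with $y^{(k)}=(y^{(k)}_j)_{j\in\Z}\in X_\infty^\Z$ satisfying $f(y^{(k+1)}_{j+1})=y^{(k)}_j$ for all $j\in\Z$ and $k\in\N$. A point of $\widetilde{X_f}^\Z$ is a family $(w_j)_{j\in\Z}$ with $w_j=(w_{j,k})_{k\in\N}\in\widetilde{X_f}$, i.e.\ $w_{j,k}\in X_\infty$ and $f(w_{j,k+1})=w_{j,k}$. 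The map $T_{\widetilde{f}}$ shifts the $j$-index and then applies $\widetilde{f}$, where $\widetilde{f}(z_0,z_1,z_2,\dots)=(f(z_0),z_0,z_1,\dots)$.

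Next I would define the candidate conjugacy $\Phi:\widetilde{X^\Z_{T_f}}\to\widetilde{X_f}^\Z$ by the diagonal reading
\[
\Phi\bigl((y^{(k)})_{k\in\N}\bigr)_j=\bigl(y^{(k)}_{j+k}\bigr)_{k\in\N}\quad\text{for every }j\in\Z.
\]
The compatibility $f\bigl(y^{(k+1)}_{j+k+1}\bigr)=y^{(k)}_{j+k}$ is exactly the natural-extension relation $T_f(y^{(k+1)})=y^{(k)}$ evaluated at coordinate $j+k$, so each $\Phi(y)_j$ really lies in $\widetilde{X_f}$. The inverse $\Psi:\widetilde{X_f}^\Z\to\widetilde{X^\Z_{T_f}}$ is obtained by reading along the opposite diagonal, $\Psi\bigl((w_j)_{j\in\Z}\bigr)^{(k)}_j:=w_{j-k,k}$; the relation $f(w_{j-k,k+1})=w_{j-k,k}$ built into each $w_{j-k}\in\widetilde{X_f}$ translates precisely into $T_f\bigl(\Psi(w)^{(k+1)}\bigr)=\Psi(w)^{(k)}$, so $\Psi$ is well-defined, and $\Phi\circ\Psi$, $\Psi\circ\Phi$ are the identity by inspection.

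Continuity is immediate because in each direction every coordinate of the image depends on a single coordinate of the source, and the two topologies are the product topologies. The last thing to verify is equivariance $\Phi\circ\widetilde{T_f}=T_{\widetilde{f}}\circ\Phi$. Writing $\widetilde{T_f}(y)=(z^{(k)})_{k\in\N}$ with $z^{(0)}_j=f(y^{(0)}_{j+1})$ and $z^{(k)}=y^{(k-1)}$ for $k\ge 1$, one obtains
\[
\Phi\bigl(\widetilde{T_f}(y)\bigr)_j=\bigl(f(y^{(0)}_{j+1}),\,y^{(0)}_{j+1},\,y^{(1)}_{j+2},\,\dots\bigr),
\]
while on the other side $T_{\widetilde{f}}(\Phi(y))_j=\widetilde{f}\bigl(\Phi(y)_{j+1}\bigr)=\widetilde{f}\bigl((y^{(k)}_{j+1+k})_{k\in\N}\bigr)$ which produces the same sequence. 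The main obstacle is nothing conceptual, only keeping the double index bookkeeping consistent between the two descriptions: the key reorganisation is that a point of $\widetilde{X^\Z_{T_f}}$ is naturally an $\N\times\Z$ array, and both natural extensions correspond to slicing the same array, one along rows shifted diagonally and the other along the diagonals themselves.
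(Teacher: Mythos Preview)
Your proof is correct and essentially identical to the paper's: both construct the conjugacy by reading the $\N\times\Z$ array of coordinates along diagonals. The paper defines its map $\pi$ in the direction $\widetilde{X_f}^{\Z}\to\widetilde{X^{\Z}_{T_f}}$ via $(\tilde{x}_k)_k\mapsto (X^l)_l$ with $(X^l)_k=x_{k-l}^l$, which is exactly your inverse $\Psi$, and the equivariance check is the same computation read in the opposite direction.
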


\begin{proof}
Here we denote $Y_{\mathbb Z}=Y^{\mathbb Z}$ for any set $Y$. The natural extension $\widetilde{X_{T_f} }$
 is given by $$\widetilde{X_\Z}:=\widetilde{X^{\Z}_{T_f}}=\{(x^l)_{l\in \N}\in (X_\mathbb{Z})^{\mathbb N}, \ \forall l \  T_f(x^{l+1})=x^{l}\}.$$  With  $x^l=(x^l_k)_k$, the equality $T_f(x^{l+1})=x^{l}$ may be rewritten as 
 $f(x^{l+1}_{k+1})=x^{l}_k$ for all $k\in \Z$.  On the other hand  the natural extension $\widetilde{X_f}$   of $(X,f)$ is defined as 
 $$ \widetilde{X}= \widetilde{X_f}:=\{(x^l)_{l\in \N}\in X^\N, \ \forall l \ f(x^{l+1})=x^{l}\}. $$
We consider the map 
 \begin{align*}\pi : & \widetilde{X}_\mathbb{Z}\rightarrow \widetilde{X_\mathbb Z},\\
 & (\tilde{x}_k)_k\mapsto (X^l)_l  
\end{align*}
with $\tilde{x}_k=(x_k^l)_l\in \widetilde{X}$ and $X^l=(x^l_{k-l})_k\in X_{\mathbb Z}$. 
This maps takes value in $\widetilde{X_\mathbb Z}$ because 
\begin{align*}
(T_f(X^{l+1}))_k=f(x_{k-l}^{l+1})= x_{k-l}^{l} \text{ and consequently }
T_f(X^{l+1})=X^{l}.
\end{align*}
One shows also easily that $\pi$ is bijective and continuous. For $k\in \Z$ we let $x^{-1}_k=f(x^0_k)$. Let us check now $\pi$ is a conjugacy :
\begin{align*}
\pi\circ T_{\widetilde{f}}\left((\tilde{x}_k)_k\right)&=\pi\left( (\widetilde{f}(\tilde{x}_{k+1})_k\right),\\
&=\pi \left( (x^{l-1}_{k+1})_{l,k} \right),\\
&=(x_{k-l+1}^{l-1})_{l,k}.
\end{align*}
and 
\begin{align*}
\widetilde{T_f}\circ \pi\left( (\tilde x_k)_k\right)&=\widetilde{T_f}\left((x^l_{k-l})_{l,k}\right),\\
&=(x^{l-1}_{k-l+1})_{l,k}.
\end{align*}
This completes the proof.
\end{proof}	

\begin{rem}
	The natural extension of a general  is a generalized subshift of finite type. The proof is presented in Appendix \ref{app:B}.
\end{rem}	

\subsection{Mean dimension of unit CA}	
The system 
$\left( \left (\widetilde{X_f}\right)^{\Z}, T_{\widetilde{f}} \right)$ is topologically conjugated to the shift on $\widetilde{X_f}$. From Proposition \ref{natunit}, Proposition \ref{inequnit} and  Proposition \ref{prop:upper bound} we derive the following formula for the mean dimension of unit CA's.
	\begin{thm}
Let $F=T_f$ be a unit CA associated to a topological system $(X,f)$. Then we have
$$\mdim(X^\mathbb Z, T_f)=\mdim(\widetilde{X^\mathbb Z_{T_f}}, \widetilde{T_f})=\stab(\widetilde{X_f}).$$ 
\end{thm}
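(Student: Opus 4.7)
The plan is to assemble the three-way equality as a short chain from the results already established. The starting point is Proposition \ref{natunit}, which identifies $(\widetilde{X^\Z_{T_f}}, \widetilde{T_f})$ with the unit CA $(\widetilde{X_f}^\Z, T_{\widetilde f})$ built over the natural extension of $(X,f)$. The key observation I would exploit is that $\widetilde f$ is a homeomorphism of $\widetilde{X_f}$, so its local rule is bijective; hence $T_{\widetilde f}$ is strongly permutative in the sense of Section \ref{sec:Permutative CA}.

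First I would compute the middle term. Applying Theorem \ref{strong} to $T_{\widetilde f}$ with domain $I=\{1\}$, together with $\diam(\{0,1\})=1$, gives
$$\mdim(\widetilde{X_f}^\Z, T_{\widetilde f}) = \stab(\widetilde{X_f}) \cdot \diam(\{0,1\}) = \stab(\widetilde{X_f}).$$
Combined with the conjugacy from Proposition \ref{natunit}, this yields $\mdim(\widetilde{X^\Z_{T_f}}, \widetilde{T_f}) = \stab(\widetilde{X_f})$. An equivalent route, which is the one hinted at just before the statement, is the straightening conjugacy $\Phi((x_k)_k) = (\widetilde f^{\,k}(x_k))_k$ between $T_{\widetilde f}$ and the bilateral shift on $\widetilde{X_f}^\Z$, whose mean dimension is $\stab(\widetilde{X_f})$ by Tsukamoto's theorem.

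Next I would sandwich $\mdim(X^\Z, T_f)$. The lower bound $\mdim(X^\Z, T_f) \geq \stab(\widetilde{X_f})$ is immediate from Proposition \ref{nat} applied to $(X^\Z, T_f)$, since its natural extension now has mean dimension $\stab(\widetilde{X_f})$ by the previous step. The matching upper bound $\mdim(X^\Z, T_f) \leq \stab(\widetilde{X_f})$ is exactly the content of Proposition \ref{inequnit}. These two inequalities pinch $\mdim(X^\Z, T_f)$ to $\stab(\widetilde{X_f})$ and establish the stated three-way equality.

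Given that every ingredient is already in place, there is no real obstacle left in this final proof; the only point demanding a moment's care is noticing that the invertibility of $\widetilde f$ promotes $T_{\widetilde f}$ to a strongly permutative (in fact shift-conjugate) CA, so that the maximal formula of Theorem \ref{strong} applies. All the genuine work has been done earlier, in Propositions \ref{nat}, \ref{inequnit}, and \ref{natunit}.
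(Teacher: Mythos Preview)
Your proof is correct and follows essentially the same route as the paper: use Proposition \ref{natunit} to identify the natural extension with $(\widetilde{X_f}^\Z, T_{\widetilde f})$, compute its mean dimension as $\stab(\widetilde{X_f})$ via the invertibility of $\widetilde f$, and then sandwich $\mdim(X^\Z, T_f)$ between Proposition \ref{inequnit} (upper bound) and Proposition \ref{nat} (lower bound). The only cosmetic difference is that the paper invokes the shift conjugacy directly (stated just before the theorem) rather than routing through Theorem \ref{strong}, and it cites Proposition \ref{prop:upper bound} in place of Proposition \ref{nat}; your use of Proposition \ref{nat} is in fact the cleaner way to close the lower bound.
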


By considering the unit CA associated to the topological systems given by 	Proposition \ref{examples} we get :
	\begin{cor}
		For any integers $0\leq k\leq n$, there exists a permutative unit cellular automaton $F$ on $X^\mathbb{Z}$ with  $ \mdim(F)=k$ and $\stab(X)=n$. For $k\geq 1 $ we may assume $X$ connected.
	\end{cor}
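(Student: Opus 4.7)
The plan is to derive this corollary directly from the preceding theorem, which states $\mdim(X^{\mathbb Z}, T_f) = \stab(\widetilde{X_f})$ for any unit CA, applied to the topological systems furnished by Proposition \ref{examples}. Given $0 \leq k \leq n$, I would invoke Proposition \ref{examples} to obtain a surjective finite-to-one topological system $(X, T)$ with $\dim X = n$ and $\dim \widetilde{X_T} = k$ (and $X$ connected when $k \geq 1$). Set $f := T$ and consider the associated unit CA $F = T_f$.

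First I would verify permutativity: for $I = \{1\}$, the set $\{\max I, \min I\}\setminus\{0\} = \{1\}$ and $X^{I\setminus\{1\}}$ is a singleton, so the permutativity condition reduces to the surjectivity of $f = T$, which holds by construction. The theorem then yields $\mdim(F) = \stab(\widetilde{X_f})$, so the corollary reduces to checking $\stab(\widetilde{X_f}) = k$ and $\stab(X) = n$.

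These two equalities require upgrading the corresponding statements for $\dim$ into statements for $\stab$. The upper bounds $\stab(X) \leq \dim(X) = n$ and $\stab(\widetilde{X_f}) \leq \dim(\widetilde{X_f}) = k$ are automatic from the definition of stable dimension. For the lower bounds, I would use monotonicity of $\stab$ under closed embeddings together with $\stab([0,1]^m) = m$ for Euclidean cubes. In the construction of Proposition \ref{examples} the cube $Z = [0,1]^n$ embeds as a closed subspace of $X$, giving $\stab(X) \geq n$; and since $T$ acts as the identity on $[0,1]^k \times \{0^{n-k}\}$, this cube embeds as a closed subspace of $\widetilde{X_T}$ via constant sequences, giving $\stab(\widetilde{X_f}) \geq k$.

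There is no serious obstacle here: the content is carried entirely by the preceding theorem and by Proposition \ref{examples}. The only small point to watch is the passage from topological dimension to stable dimension, which is handled by the observation that the dimension witnesses in the construction of Proposition \ref{examples} are Euclidean cubes, for which $\dim$ and $\stab$ coincide.
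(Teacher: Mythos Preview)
Your proposal is correct and follows exactly the route the paper intends: the paper's proof is the single sentence ``By considering the unit CA associated to the topological systems given by Proposition \ref{examples},'' and you have simply unpacked this. Your observation about upgrading from $\dim$ to $\stab$---via the embedded cubes $[0,1]^n\subset X$ and $[0,1]^k\times\{0^{n-k}\}\subset \widetilde{X_T}$ in the explicit construction of Proposition \ref{examples}---is a genuine detail the paper leaves implicit, and your handling of it is sound.
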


\begin{rem}
When $f$ is a near homeomorphism on $X$, i.e. the uniform limit of homeomorphisms of $X$, then $T_f$ is a near strongly permutative CA and therefore $\mdim(T_f)=\stab(X)=\stab(\widetilde{X_f})$. This last inequality follows in this case from the stronger fact that $X$ and $\widetilde{X_f}$ are homeomorphic  for near homeomorphisms \cite{morton} .
\end{rem}	

	\section{Algebraic CA}

	In this section, we investigate the cellular automaton with algebraic structure. Firstly, we show that one-dimensional algebraic permutative CA has maximal mean dimension. Secondly, we prove that higher dimensional algebraic permutative CA has infinite mean dimension. Finally, building on an example of Meyerovitch \cite{meyerovitch2008finite}, we give an example of multidimensional algebraic surjective CA with positive finite mean dimension. 
	
	
	 A cellular automaton $F$ on $X^{\mathbb{Z}^d}$ with local rule $f:X^I\rightarrow X$ for $I \subset \Z^d$ is said to be {\it algebraic} if $X$ is a compact metrizable abelian group and $f$ is a continuous group homomorphism.
	
\subsection{One-dimensional algebraic permutative CA}
An \textit{algebraic} system $(X,T)$ is a topological system such that 	$X$ is a compact metrizable abelian group and $T$ is a continuous group homomorphism on $X$.
A topological extension $\psi:(Y,S)\rightarrow (X,T)$ between two algebraic systems $(Y,S)$ and $(X,T)$ is said algebraic when $\psi:Y\rightarrow X$ is a homomorphism of group.  We recall in the next two Lemmas the algebraic structure  of the natural extension of an algebraic dynamical system. The easy proofs are left to the reader.

	\begin{lem}\label{lem:algebraic nat}
	Let $(X,T)$ be an algebraic dynamical system.  Then the natural extension $(\widetilde{X_T}, \widetilde{T})$ is also algebraic.
	\end{lem}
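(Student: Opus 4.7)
The statement is essentially a bookkeeping check, so the plan is to identify the three group structures involved and verify they are compatible. First, I would establish that the space $X_\infty:=\bigcap_{n\ge 0}T^nX$ on which the inverse limit is built is itself a compact metrizable abelian group: since $T$ is a continuous homomorphism, each $T^nX$ is a (closed, by compactness of $X$) subgroup of $X$, and an arbitrary intersection of closed subgroups is a closed subgroup. The restricted map $T|_{X_\infty}:X_\infty\to X_\infty$ is then a continuous surjective group homomorphism, so the inverse system defining $\widetilde{X_T}$ takes place entirely in the category of compact abelian groups with continuous homomorphisms.

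Next, I would equip $X_\infty^{\mathbb N}$ with the product (coordinatewise) group structure, which makes it a compact metrizable abelian group, and show that
\[
\widetilde{X_T}=\bigl\{(x_k)_{k\in\mathbb N}\in X_\infty^{\mathbb N}:\ T(x_{k+1})=x_k\ \text{for all }k\in\mathbb N\bigr\}
\]
is a closed subgroup. Closedness is immediate because each defining condition $T(x_{k+1})-x_k=0$ is a closed condition (both $T$ and the projections are continuous). For the subgroup property, if $(x_k),(y_k)\in\widetilde{X_T}$, then
\[
T(x_{k+1}+y_{k+1})=T(x_{k+1})+T(y_{k+1})=x_k+y_k,
\]
and similarly $T(-x_{k+1})=-T(x_{k+1})=-x_k$, so the set is closed under the group operations inherited from $X_\infty^{\mathbb N}$.

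Finally I would verify that $\widetilde{T}$ is a continuous group homomorphism on $\widetilde{X_T}$. By definition $\widetilde{T}((x_k)_{k\in\mathbb N})=(Tx_0,x_0,x_1,\ldots)$, and each coordinate of the image is either $T$ applied to a coordinate of the input or a coordinate of the input, both of which are continuous homomorphisms $\widetilde{X_T}\to X_\infty$; hence $\widetilde{T}$ is itself a continuous homomorphism. This shows $(\widetilde{X_T},\widetilde{T})$ is algebraic, as claimed.

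\textbf{Main obstacle.} There is no serious obstacle; the only point requiring a moment of care is that the underlying space of the inverse limit is $X_\infty$ rather than $X$, and one should check that $X_\infty$ inherits the group structure (which it does, since images and countable intersections of closed subgroups are closed subgroups). Everything else is a direct verification that the two defining operations — taking inverse limits of homomorphisms and applying the shift/$T$ coordinatewise — respect the group structure.
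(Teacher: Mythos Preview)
Your proposal is correct and is precisely the routine verification the authors have in mind: the paper itself does not prove this lemma, stating only that ``the easy proofs are left to the reader.'' Your check that $X_\infty$ is a closed subgroup, that $\widetilde{X_T}$ is a closed subgroup of $X_\infty^{\mathbb N}$ under the coordinatewise operation, and that $\widetilde{T}$ is a continuous homomorphism is exactly what is required.
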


		\begin{lem}\label{lem:algebraicnatextension}
	Let $(Y,S)$ and $(X,T)$ be  algebraic dynamical systems with $(Y,S)$ being invertible. Assume $\psi:(Y,S)\rightarrow (X,T)$  is an algebraic extension. Then the induced topological extension $\widetilde{\psi}:(Y,S)\rightarrow (\widetilde{X_T}, \widetilde{T})$ is also algebraic. 	
	\end{lem}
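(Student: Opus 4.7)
The plan is to use the explicit formula for $\widetilde{\psi}$ given by the universal property recalled in the introduction of the natural extension, namely $\widetilde{\psi}(y)=(\psi(S^{-k}y))_{k\in\N}$, and verify that this map is a group homomorphism by a direct coordinate-wise computation. Since $(Y,S)$ is invertible and $S$ is a continuous group homomorphism, $S^{-1}$ is automatically a continuous group homomorphism as well (a continuous bijective homomorphism between compact groups has a continuous inverse, which is again a homomorphism), so $S^{-k}$ is a group homomorphism for every $k\in\N$.

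First, I would invoke Lemma \ref{lem:algebraic nat} to view $\widetilde{X_T}$ as a closed subgroup of the product group $X^\N$, so that its group law is the coordinate-wise one inherited from $X$. Then for $y,y'\in Y$, I would compute
\begin{align*}
\widetilde{\psi}(y\cdot y')&=\bigl(\psi(S^{-k}(y\cdot y'))\bigr)_{k\in\N}=\bigl(\psi(S^{-k}y\cdot S^{-k}y')\bigr)_{k\in\N}\\
&=\bigl(\psi(S^{-k}y)\cdot\psi(S^{-k}y')\bigr)_{k\in\N}=\widetilde{\psi}(y)\cdot \widetilde{\psi}(y'),
\end{align*}
using successively that $S^{-k}$ is a homomorphism and that $\psi$ is a homomorphism, and finally the fact that the group law on $\widetilde{X_T}\subset X^\N$ is coordinate-wise. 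Continuity of $\widetilde{\psi}$ and the intertwining relation $\widetilde{T}\circ\widetilde{\psi}=\widetilde{\psi}\circ S$ have already been established by the universal property, so the only new content is the algebraic property just verified.

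There is no serious obstacle here; the only subtlety is the preliminary remark that the inverse of a continuous group endomorphism of a compact metrizable abelian group, when it exists set-theoretically, is automatically a continuous group homomorphism, which is what allows the iterates $S^{-k}$ to be safely pushed through the group law of $Y$. Once this is acknowledged, the proof is a one-line calculation in the product group $X^\N$.
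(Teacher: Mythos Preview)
Your proof is correct and is precisely the routine verification the paper has in mind: the authors explicitly leave the proof to the reader, and your coordinate-wise computation using $\widetilde{\psi}(y)=(\psi(S^{-k}y))_{k\in\N}$ together with the fact that $S^{-1}$ (hence each $S^{-k}$) is a group homomorphism is the expected argument.
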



	Now we deduce the formula of mean dimension of algebraic permutative CA.
	\begin{lem}\label{lem:algebraic}
		Let $F$ be an algebraic permutative cellular automaton on $X^\Z$. Then $\mdim(X^{\Z}, F)=\stab(X) \cdot \text{\rm diam}(I\cup\{0\}).$ 
	\end{lem}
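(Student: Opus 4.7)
The plan is to combine the upper bound from Proposition~\ref{prop:upper bound} with a matching lower bound derived from the factor map $\phi$ of Subsection~\ref{previ}, mirroring the proof of Theorem~\ref{strong} but with Lemma~\ref{lem:topological extension} taking the role of the conjugacy Lemma~\ref{lem:conjugate}. Proposition~\ref{prop:upper bound} immediately yields $\mdim(X^\Z,F)\leq \stab(X)\cdot\diam(I\cup\{0\})$, so only the reverse inequality is at issue.

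For the lower bound, since an algebraic permutative CA is \emph{a fortiori} permutative, Lemma~\ref{lem:topological extension} yields a surjective continuous factor map $\phi:(\mathbb{Y},G)\to (X\times (X^{J^*})^{\mathbb N},g)$. By monotonicity of mean dimension under factor maps combined with Corollary~\ref{cor:mdim X times X N},
\[
\mdim(\mathbb{Y},G)\ \geq\ \mdim\bigl(X\times (X^{J^*})^{\mathbb N},g\bigr)\ \geq\ \stab(X)\cdot\diam(I\cup\{0\}).
\]
To transfer this bound to $(X^\Z,F)$, I case-split on the sign of $I$. If $I$ contains both positive and negative integers then $(\mathbb{Y},G)=(X^\Z,F)$ and we are done. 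Otherwise, without loss of generality $I\subset\N$, so $(\mathbb{Y},G)=(X^{\mathbb N},F^+)$ and $(X^\Z,F)$ is a skew-product over it; Lemma~\ref{lem:skew product} then gives $\mdim(X^\Z,F)\geq \mdim(X^{\mathbb N},F^+)$, so the bound carries over, and the case $I\subset -\N$ is symmetric.

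The main obstacle is verifying that Lemma~\ref{lem:topological extension} indeed yields a tight lower bound in this algebraic setting. The rest of the argument is a direct assembly of previously established lemmas, with the algebraic hypothesis entering only insofar as it implies the permutative property needed to invoke Lemma~\ref{lem:topological extension}.
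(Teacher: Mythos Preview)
There is a genuine gap: the step ``by monotonicity of mean dimension under factor maps'' is false. Mean dimension, unlike topological entropy, is \emph{not} monotone under topological factor maps. For instance, if $c:[0,1]\to[0,1]^2$ is a space-filling curve, then $(x_n)_n\mapsto (c(x_n))_n$ is a factor map from $([0,1]^{\Z},\sigma)$ onto $(([0,1]^2)^{\Z},\sigma)$, yet the factor has mean dimension $2$ while the extension has mean dimension $1$. In fact, your argument uses only the permutative hypothesis, so if it were valid it would give $\mdim(F)=\stab(X)\cdot\diam(I\cup\{0\})$ for \emph{every} permutative CA; but the paper exhibits permutative unit CA's with strictly smaller mean dimension (Section~\ref{sec:Unit CA}), so your proof cannot be correct as stated.

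The algebraic hypothesis is exactly what rescues the monotonicity step, and this is how the paper proceeds. The paper passes to the natural extension $(\widetilde{X^{\Z}_F},\widetilde{F})$, which is algebraic by Lemma~\ref{lem:algebraic nat}; then, using the universal property and Lemma~\ref{lem:algebraicnatextension}, the composite $\psi$ of $\phi$ with the projection onto $(X^{J^*})^{\N}$ lifts to an \emph{algebraic} factor map $\widetilde{\psi}:(\widetilde{X^{\Z}_F},\widetilde{F})\to(X^{\Z},\sigma^{J^*})$. At this point one invokes \cite[Corollary~6.1]{li2018mean}, which asserts that mean dimension is monotone under \emph{algebraic} factor maps between algebraic actions, yielding $\mdim(\widetilde{X^{\Z}_F},\widetilde{F})\geq \stab(X)\cdot\sharp J^*$. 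Proposition~\ref{nat} then transfers this to $\mdim(X^{\Z},F)$. So the algebraic structure enters essentially, not incidentally as you suggest.
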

	\begin{proof}
	
		By Proposition \ref{prop:upper bound}, it is sufficient to show $\mdim(X^\Z,F)\ge \stab(X) \cdot \sharp J^*,$ where $J^*=J\setminus\{0\}$ and $J$ is the collection of integers in the convex hull of $I$.
		Let $(\widetilde{X^\Z}, \widetilde{F})$ be the natural extension of $(X^\Z, F)$. 
		Since $F$ is permutative and the natural extension of $(X^\N, \sigma^{J^*})$ is $(X^\Z, \sigma^{J^*})$, by Subsection \ref{previ} we have the following commutative graph: 
	\begin{equation*}
	\xymatrix{
		(X^\Z, F)  \ar[d]   &    \ar[l] (\widetilde{X^\Z}, \widetilde{F}) \ar[dd]_{\widetilde{\psi}} \ar[ldd]_{\psi}  \\
		(X\times X^\N, g)\ar[d] & \\
		(X^\N, \sigma^{J^*})  & \ar[l]_\pi (X^\Z, \sigma^{J^*}). 
	}
	\end{equation*}
By Lemma \ref{lem:algebraic nat} and Lemma \ref{lem:algebraicnatextension}, the invertible  system  $(\widetilde{X^\Z_F}, \widetilde{F})$ is  algebraic  and $\widetilde{\psi}:(\widetilde{X^\Z_F}, \widetilde{F})\rightarrow  (X^\Z, \sigma^{J^*})$ is an algebraic extension.
	By \cite[Corollary 6.1]{li2018mean}, we have $\mdim(\widetilde{X^\Z}, \widetilde{F})\ge \mdim(X^\Z, \sigma^{J^*})=\stab(X) \cdot \sharp J^*.$ Meanwhile, by Proposition \ref{nat}, we obtain
	$$
	\mdim(X^\Z,F)\ge \mdim(\widetilde{X^\Z_F}, \widetilde{F}) \ge \stab(X) \cdot \sharp (J^*).
	$$
	\end{proof}
	
	\begin{ex}
		Let $f: \T^{\{0,1\}}\to \T$ defined by $(x,y)\mapsto 2x+3y$. The associated  cellular automaton $T_f$ on $\T^\Z$ is an algebraic permutative CA. By Lemma \ref{lem:algebraic}, we have $\mdim(\T^\Z, T_f)=1$.
	\end{ex}
	
\subsection{Higher dimensional algebraic permutative CA }	
	
Let $d>1$.
Following the notations used in \cite{Bur20} we  let $\mathbb I$ be the convex hull of the domain $I$ of a CA on $X^{\mathbb Z^d}$. The {\it support function}  of $\mathbb I$ is the function  $h_{\mathbb I}:\mathbb S^{d-1}\rightarrow \mathbb R$, which maps $u\in \mathbb S^{d-1}$ to $\max_{i\in \mathbb I} i\cdot u$ with $\cdot$ be the usual scalar product on $\R^d$.
For a convex $d$-polytope $J$ in $\mathbb R^d$, a face $F$ of $J$  and $\epsilon\in \mathbb R$ we denote by $N^F$ the exterior normal vector to $F$ and by $T_F^+J(\epsilon)$ the  closed semi-space normal to $N^F$   satisfying  $[ F+\epsilon'N^F\subset T_F^+J(\epsilon)]\Leftrightarrow \epsilon'\geq \epsilon $.

The $\mathbb I$-morphological boundary of $J$ is the subset 
$$\partial_{\mathbb I}^-J:=\{j\in J, \, i+j\in J \text{ for all }i\in \mathbb I\}.$$
We  consider the subset of $\partial_{\mathbb I}^-J$  given by   $\partial^-_{\mathbb I}F:=\partial_{\mathbb I}^-J\cap T_F^+J(-h_{\mathbb I}(N^F))$. 
The sets $\partial^-_{\mathbb I}F$ over faces  $F$ of $J$ are covering $\partial^-_{\mathbb I}J$ but do not define a partition in general. 
For any face $F$ of $J$   we let $u^F\in ex(\mathbb I)\subset I'$ with $u^F\cdot N^F=h_{\mathbb I}(N^F)$.  We  also let $\mathcal F_{\mathbb I}(J)$ be the set of  faces $F$ for which $u_F$ is uniquely defined.  We denote by $\partial_{\mathbb I}^{\bot}J$ the subset of $\partial^-_{\mathbb I}J$ given by  $$\partial_{\mathbb I}^{\bot}J:=\bigcup_{F\in \mathcal F_{\mathbb I}(J)}\partial^-_{\mathbb I}F.$$


Let $T_f:X^{\Z^d}\circlearrowleft$, $d>1$, be a higher-dimensional CA associated to a local rule  $f:X^I\rightarrow X$ with $I\subset \mathbb Z^d$. We say that $T_f$ is \textit{permutative} when for any extreme point $j\neq 0^d$ of $\mathbb{I}$ and    for any $x^j=(x_i)_{i\in I\setminus \{j\}} \in X^{I\setminus \{j\}}$, the map   $f_{x^j}:X\circlearrowleft $,  $x_j \mapsto f((x_i)_{i\in I})$, is surjective.

For a subset $E$ of $\mathbb R^d$, we let $\underline{E}$ the set of integers in $E$. By Lemma 13 in \cite{Bur20}, the semi-conjugacy $g=g_{\underline{J} , \underline{J\setminus \partial_{\mathbb I}^{\bot}J}}$ is surjective. Moreover for any domain $I\neq \{0\}$ we may choose $J$ such that $\underline{\partial_{\mathbb I}^{\bot}J}$ has arbitrarily large cardinality (see Section 7 in \cite{Bur20}).  Recall now that  $g_{\underline{J} , \underline{J\setminus \partial_{\mathbb I}^{\bot}J}}$ semi-conjugates $F$ with the a skew-product over the full shift on $X^{\underline{\partial_{\mathbb I}^{\bot}J}}$.  Arguing as in Lemma \ref{lem:algebraic} we conclude  that  : 
	\begin{lem}\label{lem:multidimension algebraic}
		Let $F$ be an algebraic, permutative, non trivial (i.e. with $I\neq \{0\}$) cellular automaton on $X^{\Z^d}$ with $d>1$. Then $\mdim(X^{\Z^d}, F)$ is infinite. 
	\end{lem}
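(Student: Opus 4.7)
The plan is to imitate the strategy of Lemma \ref{lem:algebraic} in higher dimensions, exploiting the geometric flexibility of the $\mathbb{I}$-morphological boundary to produce shift factors of arbitrarily large ``thickness''. Concretely, for any integer $N \geq 1$, by Section~7 of \cite{Bur20} one can choose a convex polytope $J \subset \mathbb{R}^d$ with $\lvert\underline{\partial^{\bot}_{\mathbb{I}}J}\rvert \geq N$. Setting $J' := \underline{J} \setminus \underline{\partial^{\bot}_{\mathbb{I}}J}$, Lemma~13 of \cite{Bur20} shows that the map $\phi_{\underline{J}, J'}$ of Subsection \ref{previ} is a surjective semi-conjugacy from $(X^{\mathbb{Z}^d}, F)$ onto the skew-product $(X^{J'} \times (X^{\underline{\partial^{\bot}_{\mathbb{I}}J}})^{\mathbb{N}}, g_{\underline{J}, J'})$ whose base factor is the unilateral full shift on $(X^{\underline{\partial^{\bot}_{\mathbb{I}}J}})^{\mathbb{N}}$.

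Next I would lift everything to natural extensions, keeping track of the algebraic structure. Since $f$ is a continuous group homomorphism, the semi-conjugacy $\phi_{\underline{J}, J'}$ and the coordinate projection onto the shift base $(X^{\underline{\partial^{\bot}_{\mathbb{I}}J}})^{\mathbb{N}}$ are both group homomorphisms, so their composition is an algebraic topological extension. By Lemma \ref{lem:algebraic nat}, $(\widetilde{X^{\mathbb{Z}^d}_F}, \widetilde{F})$ is an invertible algebraic system, and since the natural extension of the unilateral shift on $(X^{\underline{\partial^{\bot}_{\mathbb{I}}J}})^{\mathbb{N}}$ is the bilateral shift on $(X^{\underline{\partial^{\bot}_{\mathbb{I}}J}})^{\mathbb{Z}}$, Lemma \ref{lem:algebraicnatextension} produces an algebraic extension
\[
\widetilde{\psi} : (\widetilde{X^{\mathbb{Z}^d}_F}, \widetilde{F}) \longrightarrow \bigl((X^{\underline{\partial^{\bot}_{\mathbb{I}}J}})^{\mathbb{Z}}, \sigma\bigr).
\]

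The mean dimension estimate now follows from \cite[Corollary 6.1]{li2018mean}, which asserts that algebraic extensions do not decrease mean dimension. This gives
\[
\mdim(\widetilde{X^{\mathbb{Z}^d}_F}, \widetilde{F}) \;\geq\; \mdim\bigl((X^{\underline{\partial^{\bot}_{\mathbb{I}}J}})^{\mathbb{Z}}, \sigma\bigr) \;=\; \lvert\underline{\partial^{\bot}_{\mathbb{I}}J}\rvert \cdot \stab(X) \;\geq\; N \cdot \stab(X),
\]
and combining with Proposition \ref{nat} yields $\mdim(X^{\mathbb{Z}^d}, F) \geq N \cdot \stab(X)$. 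Since $X$ is non-discrete finite-dimensional with $\stab(X) > 0$ and $N$ is arbitrary, this forces $\mdim(X^{\mathbb{Z}^d}, F) = +\infty$.

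The main obstacle I anticipate is purely bookkeeping: one must verify at every stage of the chain that the maps remain group homomorphisms, so that the natural extension functor preserves the algebraic character of the extension and Li's result is legitimately applicable. The geometric input (existence of $J$ with $\lvert\underline{\partial^{\bot}_{\mathbb{I}}J}\rvert$ arbitrarily large) is already handled in \cite{Bur20} and crucially uses $d>1$; in dimension one, $\partial^{\bot}_{\mathbb{I}}J$ consists of at most two points, which explains why the analogous argument in Lemma \ref{lem:algebraic} produces only a finite, and indeed maximal, mean dimension.
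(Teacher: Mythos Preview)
Your proposal is correct and follows essentially the same route as the paper: the paper's proof consists of exactly the two inputs you identify---Lemma~13 and Section~7 of \cite{Bur20} to produce a surjective semi-conjugacy onto a skew-product over a full shift on $X^{\underline{\partial^{\bot}_{\mathbb I}J}}$ with $\lvert\underline{\partial^{\bot}_{\mathbb I}J}\rvert$ arbitrarily large---and then simply says ``arguing as in Lemma~\ref{lem:algebraic}'', which is precisely the natural-extension/Li--Liang chain you have written out in full. Your explicit flag that the conclusion requires $\stab(X)>0$ is a standing assumption of the paper (non-discrete, positive-dimensional state space) that the authors leave implicit in the lemma statement.
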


\subsection{Examples of higher dimensional CA's with finite nonzero mean dimension}
A {\it $\Z^d$-tiling system} is a pair $(S,R)$ where $S$ is a finite set of "square tiles" and $R \subset S^E$ is some adjacency rules  with $E=\{0, \pm e_1, \pm e_2, \dots, \pm e_d  \}$,
which determine when a tile $s_1\in S$ is allowed to be placed next to a tile $s_2\in S$ (and in which directions). A configuration $x\in S^F$ for some $F\subset \Z^d$
is called {\it valid} at $n\in F$ if the neighbours of the cell at $n$ obey the adjacency rules, i.e. $x_{F+n}\in R$. Clearly, the set of infinite valid configurations in $S^{\Z^d}$ forms a subshift of finite type.

A set of {\it directed tiles} $S$ with direction $d$ is defined by a tiling system with a forward direction $d(s) \in \{\pm e_1, \dots, \pm e_d\}$ associated to each tile $s\in S$. Given a configuration $x\in S^{\Z^d}$, a {\it path} defined by $x$ is a sequence $p_0, p_1, p_2, \dots$ with  $p_0$ given and $p_{n+1}\in \Z^d, n\in \N,$ obtained by traversing the forward directions of $x$, that is to say, $p_{n+1} = p_n + d(x_{p_n})$. Given $x\in S^{\Z^d}$
, a path $\{p_n\}_{n\in \N}$ is {\it valid} if $x$ is valid at every $p_n$. A set of directed tiles $S$ is called an {\it acyclic} set of tiles if any valid path in $x\in S^{\Z^d}$ is not a loop.

Let $S$ be a set of directed set of tiles. Let $X:=(S\times \T)^{\Z^d}$.  Define $T: X\to X$ by
\begin{equation*}
T(x,y)_n=
\begin{cases}
(x_n, y_n+y_{n+d(x_n)})~&\text{if}~x~\text{is valid at}~n,\\
(x_n, y_n)~&\text{otherwise}.
\end{cases}
\end{equation*}

Notice that if $S$ is an acyclic set, then $T$ is surjective. 
Now suppose $S$ is an acyclic set. Let $\omega\in S^{\Z^d}$ and 
$$
X_\omega:=\{(s,y)\in X: s=\omega \}
$$
which is a closed $T$-invariant subset of $X$. Now define a directed graph $G_\omega=(V_\omega, E_\omega)$ with the vertex $V_\omega=\Z^d$ and the edges
$$
E_\omega=\{(n, n+d(\omega_n)): n\in \Z^d, \omega~\text{is valide at}~n \}.
$$
A subset of vertex $K\subset V_\omega$ is said to be {\it connected} in the graph $G_\omega$ if for any $a,b\in K$ there exists  $c\in K$ such that there are directed paths in $K$ from $a$ to $c$ and from $b$ to $c$. A {\it connected component} of $G_\omega$  is a connected set in $G_\omega$ which is maximal with
respect to inclusion. A connected component $K$ of $G_\omega$ is called {\it forward-infinite} if there exists a forward-infinite directed graph in $G_\omega$ starting at some/any cell of $K$. Note that an infinite connected component is not always forward-infinite, in other words, it may be backward-infinite.
Since $K$ is acyclic, the set of vertex $V_\omega$ is a disjoint union of connected components of $G_\omega$. 

By definition of $T$, it is clear that $T$ acts independently on each connected component of $G_\omega$. Suppose $G_\omega$ has $m$ forward-infinite connected component, denoted respectively by $K_1, \dots, K_m$. Let $K_0=V_\omega \setminus \sqcup_{i=1}^m K_i$ which is the union of vertex having a forward-finite path. Let $(X_i, T_i)$ be the system corresponding to the action of $T$ on $K_i$. Clearly, $(X_\omega, T)=\prod_{i=0}^{m} (X_i, T_i)$.

\begin{lem}\label{lem:omega mean dimension}
	We have $\mdim(X_0,T_0)=0$ and $\mdim(X_i, T_i)=1$ for every $1\le i\le m$. Moreover,  $1\le \mdim(X_\omega, T)\le m$ whenever $m\ge 1$.
\end{lem}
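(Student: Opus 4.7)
The plan is to exploit the product decomposition $(X_\omega, T) = \prod_{i=0}^m (X_i, T_i)$ noted just before the lemma. Combined with the standard inequalities $\mdim\!\left(\prod_i (X_i, T_i)\right) \le \sum_i \mdim(X_i, T_i)$ and $\mdim(X_j, T_j) \le \mdim(X_\omega, T)$ (since each $X_j$ is a topological factor of the product via projection), the sandwich $1 \le \mdim(X_\omega, T) \le m$ for $m \ge 1$ follows immediately from the individual values $\mdim(X_0, T_0) = 0$ and $\mdim(X_i, T_i) = 1$ for $1 \le i \le m$. So the heart of the proof lies in these two computations.

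For $\mdim(X_0, T_0) = 0$, the decisive feature of $K_0$ is that every forward path of $G_\omega$ starting there terminates after finitely many steps at a cell where $\omega$ is invalid. Iterating the local rule $y_n \mapsto y_n + y_{n+d(\omega_n)}$ along a path $n = p_0(n) \to p_1(n) \to \cdots \to p_{L(n)}(n)$ yields the Pascal identity
\[
(T_0^k y)_n = \sum_{j=0}^{L(n)} \binom{k}{j}\, y_{p_j(n)},
\]
so the entire forward orbit of the $n$-coordinate depends only on the $L(n)+1$ initial values along the path $P(n):=\{p_0(n),\dots,p_{L(n)}(n)\}$. Fix a compatible product metric on $\T^{K_0}$, a scale $\epsilon>0$, and a radius $N=N(\epsilon)$ large enough so that the projection $\pi_N$ onto $B_N \cap K_0$ is $\epsilon$-accurate. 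The composition $\pi_N \circ (T_0^0,\dots,T_0^{k-1})$ factors through the projection of $X_0$ onto the finite torus $\T^{Q}$ with $Q=\bigcup_{n\in B_N \cap K_0} P(n)$, a set whose cardinality does not depend on $k$. Therefore $\dim_\epsilon(X_0, d_k^{T_0})$ is bounded by a constant $C(\epsilon)$, and dividing by $k$ yields $\mdim(X_0, T_0) = 0$.

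For $\mdim(X_i, T_i) = 1$ with $i \ge 1$, I would handle the two inequalities separately. For the lower bound, pick any $c \in K_i$; because $K_i$ is forward-infinite and any two cells in $K_i$ share a common descendant, the unique forward path $c = p_0 \to p_1 \to p_2 \to \cdots$ is infinite. Define $\phi : X_i \to \T^\N$ by $\phi(y)_k = (T_i^k y)_c = \sum_{j \le k} \binom{k}{j}\, y_{p_j}$. Then $\phi$ is continuous, satisfies $\phi \circ T_i = \sigma \circ \phi$, and is surjective by triangular inversion of the Pascal system (which has integer entries and unit diagonal, hence inverts over $\Z$ and descends to $\T^\N$). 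So $(\T^\N, \sigma)$ is a topological factor of $(X_i, T_i)$, giving $\mdim(X_i, T_i) \ge \mdim(\T^\N, \sigma) = 1$. For the upper bound, the key graph-theoretic input is again the common-descendant property: since the forward edge out of each cell is unique under $\omega$, the unique forward paths of any two cells of $K_i$ must actually merge at their common descendant. Iterating over the finite set $B_N \cap K_i$ yields one cell $c_N$ through which every forward path from $B_N \cap K_i$ passes after a uniformly bounded number of steps. Consequently the torus coordinates needed to reconstruct the $k$-orbit of $(y_n)_{n\in B_N \cap K_i}$ lie inside the finite funnel up to $c_N$ together with the first $k$ cells of the unique infinite tail past $c_N$, a set of cardinality $k + O_{\epsilon}(1)$. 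This gives $\dim_\epsilon(X_i, d_k^{T_i}) \le k + O_{\epsilon}(1)$, hence $\mdim(X_i, T_i) \le 1$.

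The main obstacle will be the upper bound for $\mdim(X_i, T_i)$: when $d>1$ the ball $B_N$ can meet $K_i$ in many cells, and extracting a single funneling cell $c_N$ requires iterating the pairwise common-descendant property while using acyclicity of $G_\omega$ together with the uniqueness of the outgoing edge at each vertex to ensure genuine merging (rather than merely eventual co-visibility). Once the funnel is in place, both bounds for $X_i$ reduce to straightforward dimension counting on finite-dimensional tori, and the $X_0$ analysis is the easiest step, since there we only need a uniform-in-$k$ bound on the number of relevant initial coordinates.
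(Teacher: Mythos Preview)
Your proposal is correct and takes a genuinely different route from the paper's own proof.

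The paper argues structurally: for $K_0$ it writes $(X_0,T_0)$ as an inverse limit of the finite-dimensional subsystems obtained by truncating to cells whose forward path has length at most $k$, and then invokes the inverse-limit bound of \cite[Proposition~5.8]{shi2021marker}. For each forward-infinite component $K_i$ it builds a filtration $J_0\subset J_1\subset\cdots$ with $J_0$ a single infinite forward ray and $J_{n+1}=J_n\cup\{a\}$ where $a$ has its successor in $J_n$; the action on $J_0$ is identified with the algebraic permutative CA $(x_n)\mapsto(x_n+x_{n+1})$ on $\T^{\mathbb N}$, whose mean dimension is $1$ by Lemma~\ref{lem:algebraic}. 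The lower bound then comes from Lemma~\ref{lem:skew product} (the whole system is a skew product over the $J_0$-system), and the upper bound from another appeal to the inverse-limit inequality applied to the $J_n$'s.

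Your approach is instead computational: you use the Pascal identity $(T^k y)_n=\sum_j\binom{k}{j}y_{p_j(n)}$ to bound $\dim_\epsilon(\,\cdot\,,d_k)$ directly. For $K_0$ this gives a $k$-independent bound in one stroke. For $K_i$ your lower bound produces an explicit factor map onto $(\T^{\mathbb N},\sigma)$ via $y\mapsto\bigl((T_i^ky)_c\bigr)_k$ and inverts the triangular Pascal system, bypassing Lemma~\ref{lem:algebraic} entirely; your upper bound exploits the graph-theoretic observation that forward paths from a finite window must merge into a single tail (uniqueness of the outgoing edge plus the common-descendant property), which yields the linear-in-$k$ coordinate count without any inverse-limit machinery. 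The trade-off: the paper reuses its own structural lemmas, while your argument is self-contained and more elementary but relies on factor monotonicity of mean dimension (standard, though not listed among the background properties in Section~\ref{sec:background}) and on making the funnel bound $k+O_\epsilon(1)$ precise. Both routes deliver the final sandwich $1\le\mdim(X_\omega,T)\le m$ in the same way, from the product decomposition and subadditivity.
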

\begin{proof}
	Since any valid path in $X_0$ is forward-finite, the system $(X_0, T_0)$ is isomorphic to an inverse limit of finite-dimensional systems: the $k$-th system in this sequence consists of the cells in $K_0$ with forward-path in $G_\omega$ of length at most $k$, which has the topological dimension at most $k$\footnote{Because it is a countable union of spaces having dimension at most $k$.}. Since a finite-dimensional system has zero mean dimension, it follows by \cite[Proposition 5.8]{shi2021marker} that $\mdim(X_0,T_0)=0$.
	
	It remains to show $\mdim(X_i, T_i)=1$ for every $1\le i\le m$. Let $K=K_i$ be a forward-infinite connected component in $G_\omega$.  We define inductively a sequence $J_n\subset K$ as follows. Let $J_0=\{p_0, p_1, p_2, \dots \} \subset \Z^d$ be a forward-infinite path in $K$. If $J_n=K$, then let $J_{n+1}=J_n$; otherwise pick a cell $a\in K\setminus J_n$ whose successor in $J_n$ and let $J_{n+1}=J_n\cup \{a\}$. Then we have 
	$$
	J_0\subset J_1 \subset \dots  \subset J_n \subset \dots \subset K~\text{and thus}~ K=\bigcup_{n\ge 0}J_n.
	$$
	Notice that $T$ acts independently on each $J_n$. It is clear that the action of $T$ on $J_0$ is isomorphic to the algebraic CA on $\T^\N$ of the form $(x_n)_{n\in \N} \mapsto (x_n+x_{n+1})_{n\in \N}$. It follows by Lemma \ref{lem:algebraic} that  the action of $T$ on $J_0$ has the mean dimension $1$. Since the action of $T$ on $J_n$ is a skew-product extension of that on $J_{n-1}$, the action $T$ on $K$ is an inverse limit of these systems. By \cite[Proposition 5.8]{shi2021marker}, we have $\mdim(X_i, T_i)\le 1$. On the other hand, since the action $T$ on $K$ is a  skew-product extension of that on $J_0$, by  Lemma \ref{lem:skew product}, we have $\mdim(X_i, T_i)\ge 1$. Thus we have $\mdim(X_i, T_i)=1$.
	
	By mean dimension of product systems, we conclude that $1\le \mdim(X_\omega, T)\le m$ whenever $m\ge 1$.
\end{proof}

For each $\omega\in S^{\Z^d}$, define $I(\omega)$ to be the number of forward-infinite connected components of $G_\omega$. For a directed set of tiles $S$, let $I(S)=\sup_{\omega\in S^{\Z^d}} I(\omega)$. 

\begin{prop}\label{prop:example finite nonzero higher dimension}
	For any $d\ge 1$ there exist a surjective (algebraic) $\Z^d$-CA with positive, finite mean dimension.
\end{prop}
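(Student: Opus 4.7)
The strategy is to combine Meyerovitch's combinatorial construction \cite{meyerovitch2008finite} with the skew-product CA $T$ on $(S\times \T)^{\Z^d}$ developed in the preceding subsection. Meyerovitch produces, for every $d\geq 1$, a directed acyclic tile set $S\subset \Z^d$ whose associated SFT has finite nonzero topological entropy, and the combinatorial mechanism responsible is precisely a uniform bound on the number of forward-infinite connected components of the graphs $G_\omega$, so $1\leq I(S)<\infty$. I would give $S$ a compatible abelian group structure so that $(S\times \T)^{\Z^d}$ becomes a compact abelian group; if the naïve case-distinguished formula for $T$ is not a homomorphism, one replaces it by a linearised surrogate of the form $(x_n, y_n + \sum_{i} a_i(x_n)\, y_{n+e_i})$ with group homomorphisms $a_i:S\to \Z$ chosen so that the fiberwise dynamics still decomposes along the forward-infinite components of $G_\omega$. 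Acyclicity of $S$ then yields surjectivity of $T$.

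For the lower bound $\mdim(X,T)\geq 1$, pick any configuration $\omega$ in the SFT defined by $S$ with $I(\omega)\geq 1$; the fiber $X_\omega$ is a closed $T$-invariant subset of $X$, so $\mdim(X,T)\geq \mdim(X_\omega,T)\geq 1$ by Lemma \ref{lem:omega mean dimension}. For the upper bound $\mdim(X,T)\leq I(S)$, I would use the $T$-equivariant projection $\pi:X\to S^{\Z^d}$, $(x,y)\mapsto x$, on which $T$ acts trivially. Since $S^{\Z^d}$ is a Cantor set, any open cover of $X$ is refined by one subordinate to the finite clopen partition $\{X_p=C_p\times \T^{\Z^d}:p\in S^{[-N,N]^d}\}$; each $X_p$ is clopen and $T$-invariant, so $\mdim(X,T)=\max_p \mdim(X_p,T)$. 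A uniform-in-$\omega \in C_p$ version of the proof of Lemma \ref{lem:omega mean dimension}, exploiting the finiteness $I(S)<\infty$, then gives $\mdim(X_p,T)\leq I(S)$ for every $p$, hence $\mdim(X,T)\leq I(S)<\infty$.

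The main obstacle is this last uniform bound. The property that a given cell lies on a forward-infinite path of $G_\omega$ is not decidable from any finite window of $\omega$, so the pointwise estimate $\mdim(X_\omega,T)\leq I(\omega)$ does not immediately lift to cylinders. One must handle this carefully, either by a uniform open-cover argument combined with Lemma \ref{lem:skew product} (applied to the partial factor onto the $\T$-coordinates on $K_0$), or via a direct $\epsilon$-dimension estimate that uses Meyerovitch's uniform tile-adjacency rules to control the contribution of the forward-finite part $K_0$ at any finite scale. A secondary technical point is ensuring the constructed $T$ is strictly algebraic rather than merely a CA on an abelian group state space; this is what dictates the linearised reformulation of the local rule described in the first paragraph.
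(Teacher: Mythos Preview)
Your overall strategy coincides with the paper's: take Meyerovitch's acyclic directed tile set $S$ with $1\le I(S)<\infty$, form the skew-product CA $T$ on $X=(S\times\T)^{\Z^d}$, get the lower bound from a single fiber $X_\omega$ via Lemma~\ref{lem:omega mean dimension}. The divergence is only in the upper bound and in the handling of the word ``algebraic''.

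For the upper bound the paper does \emph{not} attempt to uniformise Lemma~\ref{lem:omega mean dimension} over cylinders. It simply invokes the fiberwise identity
\[
\mdim(X,T)=\sup_{\omega\in S^{\Z^d}}\mdim(X_\omega,T)
\]
and combines it with the pointwise bound $\mdim(X_\omega,T)\le I(\omega)\le I(S)$. This identity is precisely what dissolves your obstacle: one never needs to decide from a finite window whether a cell lies on a forward-infinite path. The identity holds because $T$ acts as the identity on the totally disconnected factor $S^{\Z^d}$, and the $\le$ direction follows from the mechanism behind Lemma~\ref{lem:m-expansive}: for fixed $n$ and fixed fiber cover $\gamma$, the map $\omega\mapsto D\bigl(\bigvee_{k<n}T_\omega^{-k}\gamma\bigr)$ is upper semicontinuous, so a single closed refinement witnessing the bound at $\omega$ also works for all $\omega'$ in a clopen neighbourhood. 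Your cylinder reduction $\mdim(X,T)=\max_p\mdim(X_p,T)$ is the first half of this argument; the ingredient you are missing is that one refinement serves a whole clopen set of fibers, which removes the need to track $K_0$ at finite scales.

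On the algebraic point: the paper does not linearise. It keeps the case-distinguished local rule verbatim, so the map on $(S\times\T)^I$ is not a group homomorphism and the CA is not ``algebraic'' in the strict sense of the paper's own definition; the parenthetical is being used loosely (each fiber map $T_\omega$ is indeed an endomorphism of $\T^{\Z^d}$). Your concern here is legitimate, but it is not addressed in the paper and is orthogonal to the mean-dimension computation.
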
	
\begin{proof}
	
	Let $d\ge 1$. By \cite[Section 4]{meyerovitch2008finite}, there exists a directed set of tiles $S$ which is an acyclic set and has $0<I(S)<+\infty$. It follows from Lemma \ref{lem:omega mean dimension} and $\mdim(X,T)=\sup_{\omega\in S^{\Z^d}} \mdim(X_\omega, T)$ that
	$$
	1\le \mdim(X,T)\le I(S)<+\infty.
	$$
	This completes the proof.
\end{proof}	
	
Finally, we remark that by Lemma \ref{lem:multidimension algebraic} the CA presented in Proposition \ref{prop:example finite nonzero higher dimension} for $d>1$ is not permutative.

	\section{Higher-dimensional CA having a spaceship}\label{sec:multi}

Let $X$ be a compact metric space.  For $v\in \Z^d$, let $\sigma_v: X^{\Z^d}\to X^{\Z^d}$ be the shift $(x_u)_{u\in \Z^d}\mapsto (x_{u+v})_{u\in \Z^d}$. 
	Let $Y$ (resp. $x_*$) be a distinguished subset (resp. point) of $X$. 	Let $T$ be another compact metric space with $\stab(T)>0$ and $(h_t:X\circlearrowleft)_{t\in T}$ a family of functions on $X$, such that 
	\begin{itemize}
		\item the map $t\mapsto h_t(x)$ is continuous for $x\in X$,
		\item the map $t\mapsto h_t(x)$ is injective for $x\in Y$,
		\item $h_t(x_*)=x_*$ for all $t$.
	\end{itemize}
	To simplify the notations we also let $h_t(x)=t \cdot x$ and $h_t\circ h_{t'}(x)=tt'\cdot x$.


Let $F$ be  a  CA on $X^{\Z^d}$ associated to a local rule $f:X^{I}\rightarrow X$ for a finite subset $I$ of $\mathbb Z^d$ satisfying $f(t\cdot y_i, \ i\in I)=t\cdot f(y_i, \ i\in I)$ for all $(y_i)_{i\in I}\in (Y\cup\{x_*\}) ^I$ and for all $t\in T$.

	The {\it support} of an element $x=(x_u)_{u\in \Z^d}\in  X^{\Z^d}$ is the set
	$$
	\supp (x)=\{u\in \Z^d: x_u\neq x_* \}\subset \Z^d.
	$$
 An  element $ x\in (Y\cup\{x_*\})^{\Z^d}$ with finite non empty support is called a {\it spaceship } when $F^px=a\cdot \sigma_v(x):=(a\cdot x_{u+v})_{u\in \mathbb Z^d}$  for some   $p\in \N^*$, $v\in \mathbb Z^d\setminus\{0\}$ and $a\in T$ satisfying $ a\cdot Y\subset Y$.

	\begin{prop}\label{thm:spaceship}
		The mean dimension of   a CA having a spaceship  is infinite. 
	\end{prop}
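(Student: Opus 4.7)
The plan is to show, for every integer $K\ge 1$, that $\mdim(F)\ge K\stab(T)/(Np)$ for some spacing $N\in\N$ independent of $K$; letting $K\to\infty$ then gives $\mdim(F)=+\infty$. Write $v_0:=-v$, so that the spaceship support shifts by $v_0$ under $F^p$. Fix $u_0\in\supp(x)$, so that $x_{u_0}\in Y$, and let $R:=\diam(\supp x)$. Choose $N\in\N$ with $|Nv_0|$ larger than $2R$ plus the propagation radius of $F$ over $p$ iterates, and offsets $w_1,\ldots,w_K\in\Z^d$ pairwise at distance exceeding $2R$. The translated supports $\supp(x)-kNv_0+w_m$ for $(k,m)\in\N\times\{1,\ldots,K\}$ are then pairwise disjoint, and since all spaceships travel by the common vector $v_0$ under $F^p$ they never interact under any iterate. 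For $n_0\ge 1$ define the continuous injection
\[
\Theta_{n_0}:T^{n_0K}\longrightarrow X^{\Z^d},\qquad (t_k^{(m)}) \longmapsto \bigoplus_{k=0}^{n_0-1}\bigoplus_{m=1}^{K}\sigma_{kNv_0-w_m}\bigl(t_k^{(m)}\cdot x\bigr),
\]
padded with $x_*$ on the complement. Injectivity of $\Theta_{n_0}$ is immediate since the value at $-kNv_0+w_m+u_0$ is $h_{t_k^{(m)}}(x_{u_0})$ and $t\mapsto h_t(x_{u_0})$ is injective on $T$.

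Using locality of $F$, disjointness of supports along the orbit, the one-step linearity $f(t\cdot y_i)=t\cdot f(y_i)$ on $(Y\cup\{x_*\})^I$, and the invariance $h_a(Y)\subset Y$, one shows by induction that $F^{jp}x=a^j\cdot\sigma_{jv}x\in(Y\cup\{x_*\})^{\Z^d}$ for every $j\in\N$ (the induction proceeds via $F\circ F^p=F^p\circ F$ rather than by composing single-step linearities), and that the superposition evolves componentwise: at time $j=kN$ in $F^p$-units the spaceship with index $(k,m)$ has arrived at $w_m$ and its value at $w_m+u_0$ is $h_{t_k^{(m)}}(y_k)$, where $y_k:=h_a^{kN}(x_{u_0})\in Y$ by iterated invariance. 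Joint continuity of $(t,y)\mapsto h_t(y)$ on the compact set $T\times Y$ together with injectivity of $t\mapsto h_t(y)$ for $y\in Y$ yields a uniform modulus $\eta(\delta)>0$ with $d(h_t(y),h_{t'}(y))\ge\eta(\delta)$ whenever $d_T(t,t')\ge\delta$ and $y\in Y$. Endow $X^{\Z^d}$ with a product metric of weights $2^{-|u|}$, set $n:=n_0Np$ and $W:=\max_m|w_m+u_0|$. Combining the previous facts, if $\|t-t'\|_\infty\ge\delta$ one finds $(k,m)$ with $d_T(t_k^{(m)},t_k'^{(m)})\ge\delta$, whence
\[
d_n\bigl(\Theta_{n_0}(t),\Theta_{n_0}(t')\bigr) \ \ge\ 2^{-W}\eta(\delta).
\]

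Set $\epsilon:=2^{-W}\eta(\delta)$. Any continuous $(d_n,\epsilon)$-injective map $X^{\Z^d}\to Z$ composed with $\Theta_{n_0}$ is a continuous $(\|\cdot\|_\infty,\delta)$-injective map $T^{n_0K}\to Z$, so $\dim_\epsilon(X^{\Z^d},d_n)\ge\dim_\delta(T^{n_0K},\|\cdot\|_\infty)$. For $\delta$ fixed sufficiently small (independently of $n_0$ and $K$), a coordinatewise projection argument combined with $\stab(T)=\lim_n\dim(T^n)/n$ gives $\dim_\delta(T^{n_0K},\|\cdot\|_\infty)\ge n_0K\stab(T)-o(n_0K)$. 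Dividing by $n=n_0Np$ and letting $n_0\to\infty$ yields $\mdim(F)\ge K\stab(T)/(Np)$; sending $K\to\infty$ gives $\mdim(F)=+\infty$. The principal technical point is the iterated evolution formula at multiples of $p$: the linearity of $f$ holds only on $(Y\cup\{x_*\})^I$ and does not iterate single-step through intermediate times at which $F^kx$ may exit this set, so one must iterate the clean $p$-step relation $F^px=a\cdot\sigma_v x$ directly (exploiting shift-commutativity and $h_a(Y)\subset Y$), and also choose the spacing $N$ large enough that the supports of neighboring spaceships remain decoupled under every single iterate of $F$, not merely at multiples of $p$.
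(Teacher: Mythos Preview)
Your argument has two substantive gaps. First, the uniform modulus $\eta(\delta)$ need not exist: nothing in the hypotheses makes $Y$ compact (in the continuous game of life $Y=]0,1]$), and the points $y_k=h_a^{kN}(x_{u_0})\in Y$ at which you read off the labels may drift towards $x_*$, where every $h_t$ agrees. For instance with $X=[0,1]$, $Y=]0,1]$, $T=[0,1]$, $h_t(y)=ty$ and $a=\tfrac12$ (allowed, since $a\cdot Y\subset Y$) one gets $|h_t(y_k)-h_{t'}(y_k)|=|t-t'|\,2^{-kN}\to 0$, so no $\eta(\delta)>0$ independent of $k$ exists and your $d_n$-separation estimate collapses. (Note also that the hypotheses only give continuity of $t\mapsto h_t(x)$ for each fixed $x$, not joint continuity.) Second, your separation of the offsets is inadequate: requiring only pairwise distance $>2R$ between the $w_m$ does not make the supports $\supp(x)-kNv_0+w_m$ disjoint for distinct pairs $(k,m)$, since $(k-k')Nv_0+(w_{m'}-w_m)$ can vanish when the $w_m$ happen to lie along the $v_0$-direction. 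One has to place the tracks in a direction transverse to $v$; this is exactly where $d>1$ is used.

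The paper avoids both problems by working purely topologically rather than through $\dim_\epsilon$. After reducing to $p=1$ it chooses $u$ not proportional to $v$ and a spacing $m$ so that $\supp(x)-imv+ju-\mathbb I$ are pairwise disjoint, and defines a continuous map $\varphi:(T^{\Z})^n\to X^{\Z^d}$ laying a bi-infinite train of scaled spaceships along each of $n$ parallel tracks. A direct computation gives $F^m\circ\varphi=\varphi\circ\sigma^{-\otimes n}$, so $\varphi$ conjugates the $n$-fold right shift with $(\Omega,F^m)$ on the image $\Omega$. Injectivity of $\varphi$ needs only the \emph{pointwise} injectivity of $t\mapsto h_t(y)$ for $y\in Y$ --- no uniformity --- and a continuous bijection from the compact space $(T^{\Z})^n$ is automatically a homeomorphism. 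One then reads off $\mdim(F)\ge\mdim(\Omega,F^m)/m=n\,\stab(T)/m$ for every $n$ from the subsystem inequality. This also bypasses the nontrivial estimate $\dim_\delta(T^{n_0K},\|\cdot\|_\infty)\ge n_0K\,\stab(T)-o(n_0K)$ that you assert without proof.

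Your concern about $p>1$ is legitimate, but the proposed fix does not work: passing from $F^{jp}x=a^j\cdot\sigma_{jv}x$ to $F^{(j+1)p}x$ still requires $F^p(t\cdot z)=t\cdot F^p(z)$ for $z$ a shift of $x$, and unwinding $F^p$ step by step demands the equivariance at $F(z),\dots,F^{p-1}(z)$, which need not lie in $(Y\cup\{x_*\})^{\Z^d}$. The paper simply replaces $F$ by its $p$-th iterate at the outset and argues with period one throughout.
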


	\begin{proof}
		Let  $F$ be a CA having a spaceship.
		 By assumption, let $x\in (Y\cup\{x_*\})^{\Z^d}$ be a spaceship of period $p$ with displacement vector $v\neq 0$, i.e. $F^px=a\cdot \sigma_v(x)$ for some $a\in T$ with $a\cdot Y\subset Y$. By considering some iterate of $F$ we may assume without loss of generality  $p=1$. 
		
		We can pick a vector $u\in \Z^d$   not proportional to $v$  and a positive integer $m$ such that the sets $\supp(x)-imv+ju-\mathbb I$ for $i,j\in \Z$ are pairwise disjoint (recall $\mathbb I$ denotes the convex hull of $I\cup \{0\}$). Let $n$ be a positive integer. The direct product of $n$ copies of the right shift $(T^\Z, \sigma^{-1})$ is denoted by $( (T^\Z)^n, \sigma^{-\otimes n})$ which has  mean dimension equal to $n\cdot \stab(T)>0$.  Define $\varphi: (T^{\Z})^n \to X^{\Z^d}$ for all $\alpha=(\alpha^1, \cdots, \alpha^n)\in (T^{\Z})^n$ by

		$$\varphi(\alpha)_{k}= \alpha_i^j a^{im}\cdot x_{k'} \text{ for }k=k'-imv+ju \text{ with }k'\in \supp(x), \ i\in \Z, \ 1\leq j \leq n$$
		
		$$\text{ and }\varphi(\alpha)_k=x_* \text{ for others }k\in \Z^d .$$
 It is not hard to see that the map $\varphi$ is continuous. As $t\mapsto t\cdot y$  is injective for $y\in Y$, the map $\varphi$ is one-to-one. Thus the map $\varphi$ is a homeomorphism from $(T^\Z)^n$ to its image $\Omega:=\varphi\left((T^\Z)^n\right)$.
		
		\vspace{5pt}
		
		\textbf{Claim}:  The dynamical system $((T^{\Z})^n, \sigma^{-\otimes n})$ is topologically conjugate to $(\Omega, F^{m})$.
		
		\vspace{5pt}
		
		Suppose our claim holds. Then $\mdim{(\Omega, F^m) }= \mdim{((T^{\Z})^n, \sigma^{-\otimes n}) }=n \cdot \stab(T)$. Thus
		$$
		\mdim{(X^{\Z^d}, F) }=\frac{\mdim{(X^{\Z^d}, F^m) }}{m}\ge\frac{\mdim{(\Omega, F^m) }}{m}=\frac{n\cdot \stab(T)}{m}.
		$$
	This will complete the proof as it holds for all positive integers $n$.
		It remains to prove our claim. In fact, it is sufficient to show $F\circ \varphi=\varphi\circ \sigma^{\otimes n}$.
		
		For $k\in \Z^d$ the $k$-coordinate of $F(\varphi (\alpha))$ depends only on the $l$-coordinates of $\varphi(\alpha)$ for $l\in k+ \mathbb I$. Fix $k$. There is at most one  pair $(i,j)\in\Z\times \{1,\cdots, n\}$ with $\left(\supp(x)-imv+ju\right)\cap \left(k+\mathbb I\right)\neq \emptyset $,   because  the sets $\supp(x)-i'mv+j'u-\mathbb I$ for $i',j'\in \Z$ are pairwise disjoint. 
		As $t\cdot x_*=x_*$ for all $t\in T$, the configuration $\varphi (\alpha)$ coincides with $\alpha_i^ja^i \cdot\sigma_{imv-ju: }x$ on $k+\mathbb I$.  Therefore 
		
		\begin{align*}
		F(\varphi (\alpha))_k&=f(\alpha_i^ja^{im} \cdot x_{q+k+imv-ju}, \ q\in I), \\
		&		= \alpha_i^ja^{im}\cdot f(x_{q+k+imv-ju)}, \ q\in I),\\
		&= \alpha_i^ja^{im}\cdot F(x)_{k+imv-ju},\\
		&= \alpha_i^ja^{im+1}\cdot x_{k+(im+1)v-ju}.
			\end{align*}
		
	Iterating again $F$ we get finally 
	\begin{align*}
		F^m(\varphi (\alpha))_k&= \alpha_i^ja^{(i+1)m}\cdot x_{k+(i+1)mv-ju},\\
		&= \phi( \sigma^{-\otimes n}\alpha)_k.
			\end{align*}

	Therefore 	$ F^m(\varphi (\alpha)= \phi( \sigma^{-\otimes n}\alpha) $ for all  $\alpha=(\alpha^1, \cdots, \alpha^n)\in (T^\Z)^n$. This completes the proof of our claim.
	\end{proof}
	
We illustrate Proposition \ref{thm:spaceship} 	with a continuous state version of the celebrated 
	Conway's game of life. We first recall the local rule of this famous CA on $\{0,1\}^{\Z^2}$. Let $I=[-1,1]^2\cap \Z^2$ and $I^*=I\setminus \{0\}$.  The local rule of the game of life is the map $f:\{0,1\}^I\rightarrow \{0,1\}$ such that $f(x_i, i\in I)=1$ if and only if either $x_0=1$ and $\sharp \{i\in I^*: \ x_i=1\}\in \{2,3\}$ or $x_0=0$ and   $\sharp \{i\in I^*: \ x_i=1\}=3$. It is well  known that this discrete CA has a spaceship, meaning here that  there a  finitely supported configuration $x\in \{0,1\}^{\Z^2}$ (i.e. with finitely many non-zero coordinates)   satisfying $F(x)=\sigma_v(x)$ for some $ v \in \mathbb Z^2\setminus \{0\}$.
	
	We describe below a continuous version of the game of life on $[0,1]^{\Z^2}$ which contains the standard discrete version as a subsystem.  We let $f$ be the local rule $f:[0,1]^I \rightarrow [0,1]$ defined as follows :
	
\begin{itemize}
\item if $x_0>0$ and $\sharp \{i\in I^*: \ x_i>0\}\in \{2,3\}$ we let $f(x_i)=\frac{\sum_{i\in I^*}x_i}{\sharp \{i\in I^*: \ x_i>0\} }$,
\item if $x_0=0$ and $\sharp \{i\in I^*: \ x_i>0\}=3$ we let $f(x_i)=\frac{\sum_{i\in I^*}x_i}{3}$,
\item in the remaining case, we let $f(x_i, \ i\in I)=0$. 
\end{itemize}	

\begin{cor}
The continuous state version of the game of life has infinite mean dimension. 	
	\end{cor}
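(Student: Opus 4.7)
The plan is to deduce this as a direct application of Proposition \ref{thm:spaceship}. I need to identify, on the space $X=[0,1]$ underlying the continuous game of life, data $(Y, x_*, T, (h_t)_{t \in T})$ satisfying the hypotheses, and exhibit a spaceship in this sense.

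The natural choice is $x_* = 0$, $Y = (0,1]$, and $T = [0,1]$ with $h_t(x) := tx$. Then $\stab(T) = \stab([0,1]) = 1 > 0$, continuity of $(t,x) \mapsto tx$ is clear, and for $x \in Y$ (so $x > 0$) the map $t \mapsto tx$ is injective; moreover $h_t(0) = 0$. The key compatibility to verify is
\[
f(t y_i, i \in I) = t \cdot f(y_i, i \in I) \qquad \text{for all } (y_i) \in (\{0\} \cup (0,1])^I \text{ and } t \in [0,1].
\]
This is immediate from the definition of $f$: scaling by $t \geq 0$ preserves the sign pattern $\{i : y_i > 0\}$ (with the convention that $0$ stays $0$), so the same case of the rule applies; inside each case $f$ is $\mathbb R_{\geq 0}$-linear (either zero or a mean $\sum y_i / k$), which commutes with multiplication by $t$. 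If $t = 0$ everything collapses to $0$, also consistently.

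Next I need a spaceship. The classical discrete game of life has spaceships on $\{0,1\}^{\mathbb Z^2}$ (for instance the glider), that is, finitely supported $x \in \{0,1\}^{\mathbb Z^2}$ with $F_{\text{disc}}(x) = \sigma_v(x)$ for some $v \neq 0$. View such an $x$ as an element of $[0,1]^{\mathbb Z^2}$ with values in $\{0\} \cup Y$. Applying the continuous rule $f$ at a cell where the discrete rule outputs $1$: there the number $k$ of positive neighbours is $2$ or $3$ (with the appropriate condition on $x_0$), and these neighbours all have value $1$, so $\sum_{i \in I^*} x_i / k = 1$; where the discrete rule outputs $0$, the continuous rule also outputs $0$ by definition. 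Hence $F(x) = F_{\text{disc}}(x) = \sigma_v(x)$, so $x$ is a spaceship in the sense required by Proposition \ref{thm:spaceship}, with period $p = 1$ and scalar $a = 1 \in T$ (and $1 \cdot Y = Y$). Applying Proposition \ref{thm:spaceship} yields $\mdim(X^{\mathbb Z^2}, F) = +\infty$.

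The only substantive step is the compatibility $f(t y_i) = t f(y_i)$, and even that is routine once one observes that scaling by $t \geq 0$ does not change which coordinates are positive. The existence of a spaceship is borrowed from the classical discrete theory, and the embedding $\{0,1\} \hookrightarrow [0,1]$ is compatible with both rules by the averaging calculation above; no delicate estimate is involved.
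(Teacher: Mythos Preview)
Your proof is correct and follows exactly the same route as the paper: apply Proposition \ref{thm:spaceship} with $X=[0,1]$, $Y=(0,1]$, $x_*=0$, $T=[0,1]$, $h_t(x)=tx$, and $a=1$, using a classical discrete spaceship embedded via $\{0,1\}\hookrightarrow[0,1]$. You simply spell out the verifications (homogeneity of $f$, agreement of the continuous and discrete rules on $\{0,1\}$-valued configurations) that the paper leaves implicit.
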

	
\begin{proof}
Apply Proposition \ref{thm:spaceship} with $X=[0,1]$, $Y=]0,1]$, $x_*=0$, $T=[0,1]$, $a=1$  and 
$h_t(x)=tx$ for all $t,x$. 
\end{proof}

\appendix

\section{Zero metric mean dimension}\label{app:A}

	Lindenstrauss and Weiss \cite{LindenstraussWeiss2000MeanTopologicalDimension} showed that the metric mean dimension is always an upper bound of the topological mean dimension. For general dynamical systems, it is widely open whether there exists a metric $d$ (compatible with the topology) such that the metric mean dimension in terms of $d$ equals the mean dimension.  This would be  a dynamical version of Pontrjagin-Schnirelmann's theorem. 
	In this appendix, we show that if a dynamical system $(X,T)$ is an inverse limit of dynamical systems of finite topological entropy, then there exists a metric $d$ on $X$ with $\mdim_M(X,T,d)=0$. 
	
	\begin{prop}\label{thm:inverse limit metric}
		Let $(X,T)$ be the inverse limit of a sequence of dynamical systems of finite topological entropy. Then there exists a metric $d$ such that $\mdim(X,T)=\mdim_M(X,T,d)=0$. 
	\end{prop}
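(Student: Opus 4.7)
The plan is to construct a compatible metric $d$ on $X$ by weighting metrics on the inverse-limit factors with weights $(\delta_i)$ decaying to $0$ fast enough to absorb the (possibly unbounded) topological entropies $h_i:=h_{\mathrm{top}}(T_i)$. Write $X=\varprojlim \mathbf{f}$ as a closed subset of $\prod_{i\in\N}X_i$ with $T$ acting coordinate-wise via $T_i$, and set $H_N:=\sum_{i\leq N} h_i$. The equality $\mdim(X,T)=0$ is immediate: by the version of \cite[Proposition 5.8]{shi2021marker} recalled before Proposition \ref{nat}, $\mdim(X,T)\leq \liminf_i \mdim(X_i,T_i)$, and finite topological entropy forces $\mdim(X_i,T_i)=0$ (as noted in the introduction). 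So the real content is to build a metric $d$ with $\overline{\mdim}_M(X,d,T)=0$.

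To this end, fix bounded metrics $d_i$ on $X_i$ with $\mathrm{diam}(d_i)\leq 1$ and a positive sequence $(\delta_i)_{i\in\N}$ strictly decreasing to $0$, to be tuned. Define
$$
d(x,y):=\sup_{i\in\N}\delta_i\, d_i(x_i,y_i),
$$
which is a metric on $X$ compatible with the subspace topology inherited from $\prod_i X_i$. Let $N(\epsilon)$ be the largest $i$ with $\delta_i>\epsilon$. If $d(x,y)>\epsilon$, then $\delta_{i_0}d_{i_0}(x_{i_0},y_{i_0})>\epsilon$ for some $i_0$, which forces $\delta_{i_0}>\epsilon$ (as $d_{i_0}\leq 1$), so $i_0\leq N(\epsilon)$. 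Consequently, for any $(n,\epsilon)$-separated subset $E\subset X$, the projection $\pi_N:X\to\prod_{i\leq N}X_i$ (with $N=N(\epsilon)$) is injective on $E$ and sends $E$ to a $(n,\epsilon)$-separated subset of $Y_N:=\pi_N(X)$ under the weighted sup metric $d^{(N)}(\bar x,\bar y):=\sup_{i\leq N}\delta_i\, d_i(\bar x_i,\bar y_i)$. Since $(Y_N,T_{\leq N})$ is a subsystem of the product $\bigl(\prod_{i\leq N}X_i,\prod_{i\leq N}T_i\bigr)$, we deduce
$$
h_d(T,\epsilon)\leq h_{d^{(N)}}(T_{\leq N}|_{Y_N},\epsilon)\leq h_{\mathrm{top}}\!\Bigl(\prod_{i\leq N}T_i\Bigr)=H_N.
$$

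It then remains to choose $(\delta_i)$ so that $H_{N(\epsilon)}/\log(1/\epsilon)\to 0$. I would set $\delta_i:=\exp(-i(H_i+1))$, which is positive and strictly decreasing to $0$. For $\epsilon\in(\delta_{N+1},\delta_N]$ one has $N(\epsilon)=N$ and $\log(1/\epsilon)\geq\log(1/\delta_N)=N(H_N+1)$, so
$$
\frac{h_d(T,\epsilon)}{\log(1/\epsilon)}\leq\frac{H_N}{N(H_N+1)}\leq\frac{1}{N}\longrightarrow 0
$$
as $\epsilon\to 0$, giving $\overline{\mdim}_M(X,d,T)=0$. The main obstacle is precisely this balancing act: if the $\delta_i$ decay too slowly the cutoff $N(\epsilon)$ outpaces $\log(1/\epsilon)$, while decaying faster costs nothing since $d$ still generates the correct topology, so tying the decay of $\delta_i$ to the cumulative entropies $H_i$ is the natural resolution.
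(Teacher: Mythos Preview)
Your proof is correct. The key points—that $(n,\epsilon)$-separation in $(X,d)$ forces $(n,\epsilon)$-separation already in the first $N(\epsilon)$ coordinates, and that the explicit choice $\delta_i=\exp(-i(H_i+1))$ makes $H_{N(\epsilon)}/\log(1/\epsilon)\to 0$—are checked cleanly. A cosmetic quibble: at $\epsilon=\delta_N$ one has $N(\epsilon)=N-1$ rather than $N$, but since $H_{N-1}\le H_N$ and your denominator bound uses only $\epsilon\le\delta_N$, the displayed inequality is unaffected.

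Your route differs genuinely from the paper's. The paper first endows each $X_k$ with a metric $\hat\rho^k$ making the bonding maps $1$-Lipschitz, observes that finite entropy gives $\mdim_M(X_k,T_k,\hat\rho^k)=0$, and then invokes \cite[Lemma~7.7]{li2018mean}, which manufactures a metric on the inverse limit with $\mdim_M(X,T,d)\le\liminf_k\mdim_M(X_k,T_k,\hat\rho^k)$. You instead build the metric by hand, tying the product weights directly to the cumulative entropies $H_N$. Your argument is self-contained and yields an explicit metric, whereas the paper's is shorter but relies on an external black box (and on the reduction to the factor-by-factor statement $h_{\mathrm{top}}<\infty\Rightarrow\mdim_M=0$). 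Both achieve the same conclusion; yours is arguably more informative since it shows exactly how fast the weights must decay relative to the entropies.
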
	
	\begin{proof}
		Let $(X,T)=\varprojlim_k (X_k, T_k)$ with $(X_k, T_k)$ of finite topological entropy.   Pick a metric $\rho^k$ on $X_k$ for each $k$ and define a metric $\hat{\rho}^k(x_k,x_k'):=\max_{0\leq j\leq k}\rho^j(x_j,x_j')$ on $X_k$ for each $k\ge 0$. Then for each $0\leq j<k$ the bonding map  $x_k\mapsto x_j$  from $(X_k, \hat{\rho}^k)$ to $(X_j, \hat{\rho}^j)$ is $1$-Lipschitz. Moreover it follows from $h_{top}(X_k,T_k)<+\infty$ that $\mdim_M(X_k, T_k, \hat{\rho}^k)=0$ for each $k$. 
	By Lemma 7.7 in \cite{li2018mean} there is a  distance $d$ on $X$ satisfying 
	$$ \mdim_M(X,T,d)\leq \liminf_k  \mdim_M(X_k,T_k,\hat{\rho}^k )=0.$$

	\end{proof}
	
	
	\begin{cor}
		If $(X,T)$ is finite dimensional, has at most countably many ergodic measures or has the small boundary property, then there exists a metric $d$ such that $\mdim(X,T)=\mdim_M(X,T,d)=0$.  
	\end{cor}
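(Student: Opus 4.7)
The plan is to verify, for each of the three hypotheses, that $(X,T)$ satisfies the assumption of Proposition~\ref{thm:inverse limit metric}, i.e.\ can be realized as the inverse limit of a sequence of topological dynamical systems of finite topological entropy. Once this is established, Proposition~\ref{thm:inverse limit metric} produces a compatible metric $d$ with $\mdim_M(X,T,d) = 0$, and the equality $\mdim(X,T) = 0$ then follows from the general inequality $\mdim(X,T) \le \underline{\mdim}_M(X,T,d)$ recalled in Section~\ref{sec:background}.

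For the small boundary property case, I would invoke Lindenstrauss's theorem that every system with SBP is topologically conjugate to an inverse limit of finite-entropy systems; the factors arise by collapsing increasingly fine open covers whose boundaries are null under every invariant Borel measure, the null boundary condition being exactly what bounds the entropy of each factor. For the case of at most countably many ergodic measures, I would first reduce to the SBP case: given countability of the set of ergodic measures and denseness of zero-measure level sets for each fixed measure, a Baire-category style argument produces arbitrarily fine finite open covers whose boundaries are simultaneously null under every ergodic measure, which is exactly the SBP. Finally, in the finite-dimensional case, the mean dimension statement $\mdim(X,T)=0$ is immediate from the bound $D\bigl(\bigvee_{i=0}^{n-1} T^{-i}\alpha\bigr) \le \dim(X)$, valid because the $n$-th join of an open cover of $X$ is again a cover of $X$ whose $D$-invariant is therefore controlled by $\dim(X)$; for the metric mean dimension part, I would exploit that every finite dimensional compact metric space is an inverse limit of finite polyhedra (Freudenthal), and refine this so that the bonding systems carry dynamics of bounded (hence finite) topological entropy, allowing Proposition~\ref{thm:inverse limit metric} to apply.

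The main obstacle is the SBP case, which underpins the whole argument: Lindenstrauss's construction of finite-entropy factors for SBP systems balances cover refinement against the null boundary condition in a delicate combinatorial way, and is what ultimately permits the inverse-limit realization. Once this is granted, the reduction of the countably-many-measures case to SBP is natural, while the finite-dimensional case requires an independent but standard polyhedral approximation; the subtle point there is to upgrade the purely topological Freudenthal approximation to an approximation that carries the dynamics with uniformly controlled entropy on each level.
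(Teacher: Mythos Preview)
Your overall strategy matches the paper's: reduce each hypothesis to the assumption of Proposition~\ref{thm:inverse limit metric} and invoke it. Your handling of the SBP case and the reduction of the countably-many-ergodic-measures case to SBP follows Lindenstrauss and is the right route.

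The finite-dimensional case, however, has a genuine gap. Freudenthal's theorem is purely topological: it realizes $X$ as $\varprojlim P_n$ with polyhedra $P_n$ and continuous bonding maps, but it supplies no dynamics on the $P_n$ and no reason any such dynamics would be intertwined with $T$ by the bonding maps. The ``refinement'' you allude to --- upgrading this to an equivariant inverse system $(P_n,T_n)$ with $h_{\mathrm{top}}(T_n)<\infty$ --- is precisely the nontrivial content, and Freudenthal gives no leverage on it; note also that finite-dimensionality of $P_n$ does not by itself bound the entropy of a continuous self-map of $P_n$ (interval maps can have infinite topological entropy), so even if you manufactured some $T_n$ you would still need a separate argument to control its entropy. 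What is actually required is a dynamical construction producing finite-entropy \emph{factors} of $(X,T)$ that separate points, which is what the paper's cited reference supplies; this is a statement about the dynamics of $T$, not a spatial approximation of $X$ alone. Your remark that $\mdim(X,T)=0$ follows immediately from $\dim X<\infty$ is correct, but for the metric-mean-dimension half you must either carry out that dynamical factor construction or cite it.
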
	
	\begin{proof}
		Due to \cite{SW}, \cite{L95} and \cite{L99}, if $(X,T)$ is finite dimensional, has at most countably many ergodic measures or has the small boundary property, then $(X,T)$ is the inverse limit of a sequence of dynamical systems with finite topological entropy, then the result follows by Proposition \ref{thm:inverse limit metric}.
	\end{proof}	
	
	

\section{The natural extension of general cellular automata}\label{app:B}

In Section \ref{sec:Unit CA}, we show that the natural extension of a unit CA is a full shift. In this section, we prove that for a general CA, its natural extension is a subshift of finite type. 
Recall that a subshift $(Y,\sigma)$ with $Y\subset X^{\mathbb Z}$ is said of {\it finite type} when there is a closed subset $L$ of $X\times X$ with  
$$Y=\{(x_n)_n\in X^{\mathbb Z}, \ (x_n,x_{n+1}) \in L \text{ for all }n\in  \mathbb Z\}.$$

\begin{prop}\label{prop:sft}
	Let  $T_f$ be a cellular automaton on $X^{\mathbb{Z}}$ with  local rule $f:X^I\rightarrow X$ for $I \subset \Z^d$. The natural extension   $\widetilde{T_f}$ is topologically conjugated to a subshift of finite type.
\end{prop}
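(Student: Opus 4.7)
The plan is to present the bilateral natural extension directly as a subshift of finite type on the compact metric alphabet $A := X^{\Z}$. Since $T_f : A \to A$ is continuous, its graph $L := \{(a,b) \in A \times A : T_f(a) = b\}$ is a closed subset of $A \times A$, so the associated SFT
$$\widetilde Y := \bigl\{(z^l)_{l \in \Z} \in A^{\Z} : T_f(z^l) = z^{l+1} \text{ for all } l \in \Z\bigr\}$$
fits the definition of subshift of finite type recalled before the statement (with $X$ there replaced by $A$).

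The main step is to build an explicit topological conjugacy between $(\widetilde{X^{\Z}_{T_f}}, \widetilde{T_f})$ and $(\widetilde Y, \sigma)$. I would set $\Psi\bigl((x^l)_{l\in \mathbb N}\bigr) := (z^l)_{l\in \Z}$ with $z^l := T_f^{\,l}(x^0)$ for $l \geq 0$ and $z^l := x^{-l}$ for $l \leq 0$. The defining relation $T_f(x^{l+1}) = x^l$ of the natural extension is exactly what guarantees $\Psi(x) \in \widetilde Y$, to be checked separately on the two regimes $l \geq 0$ and $l \leq -1$. Continuity is clear from the formula, and bijectivity follows because the coordinates $z^l$ for $l \leq 0$ already reconstruct $(x^l)_{l \in \mathbb N}$ via $x^l = z^{-l}$, while those for $l > 0$ are determined by iterating $T_f$ on $z^0 = x^0$. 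The intertwining relation $\Psi \circ \widetilde{T_f} = \sigma \circ \Psi$ reduces to writing out $\widetilde{T_f}((x^l)_l) = (T_f x^0, x^0, x^1, \dots)$ and comparing both sides coordinate-wise.

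The only point requiring some care is the orientation of the encoding: the more naive "backward" presentation $\widetilde Y' := \{(y^l) : T_f(y^{l+1}) = y^l\}$ would make $\widetilde{T_f}$ correspond to $\sigma^{-1}$ rather than $\sigma$, forcing a composition with an index-reversal to recover a conjugacy with the shift. With the forward formula for $\Psi$ chosen above, $\widetilde{T_f}$ matches $\sigma$ directly and no further work beyond routine verification of definitions is required. There is no substantive obstacle here, since the entire argument rests only on the continuity of $T_f$ on $X^{\Z}$ and the characterization of elements of the natural extension as coherent sequences of preimages under $T_f$.
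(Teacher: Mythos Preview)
Your argument is correct, and in fact it proves something more general: for \emph{any} continuous self-map $T$ of a compact metrizable space $Z$, the natural extension $(\widetilde{Z_T},\widetilde{T})$ is conjugate to the SFT over the alphabet $Z$ with allowed transitions $L=\{(a,b):T(a)=b\}$. You are simply specializing this to $Z=X^{\Z}$ and $T=T_f$. The verifications you outline (that $\Psi$ lands in $\widetilde Y$, is a homeomorphism onto, and intertwines $\widetilde{T_f}$ with the left shift) all go through exactly as you describe; the orientation remark is the only place one might slip, and you have handled it.

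The paper takes a different, more elaborate route: its alphabet is not $X^{\Z}$ but the set $\widetilde{X}\subset\prod_{l\in\N}X^{lJ}$ of ``pyramids'' compatible with the local rule, and the transition set $L$ records that one pyramid sits inside the next. The conjugacy then matches a bi-infinite sequence of such pyramids with a point of the natural extension by reading off the apex of each. This construction is tailored to the CA structure and is closer in spirit to the unit-CA case (Proposition~\ref{natunit}), where a similar idea yields a \emph{full} shift on $\widetilde{X_f}$. Your approach is more elementary and more general, but it uses the maximal alphabet $X^{\Z}$ and ignores the locality of $f$ entirely; the paper's alphabet is in principle smaller, though as the authors themselves note it is still typically infinite-dimensional, so neither construction yields dimensional information.
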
	

\begin{proof}
	Here we denote $Y_{\mathbb Z}=Y^{\mathbb Z}$ for any set $Y$.  Let $J$ be the integers in the convex hull of $I\cup \{0\}$. Let   $J_-=\min\{J\}$ and $J_+=\max\{J\}$. Let $nJ$ be the collection of integers in $[nJ_-, nJ_+]$.
	The natural extension $\widetilde{X_\mathbb{Z}}$
	is given by $$\widetilde{X_\mathbb{Z}}=\{x^l\in X_{\mathbb Z}, \ T_f(x^{l+1})=x^{l}\}\subset (X_\mathbb{Z})^{\mathbb N}.$$  With  $x^l=(x^l_k)_k$, the equality $T_f(x^{l+1})=x^{l}$ may be rewritten as 
	$f((x^{l+1}_k)_{J_-+j\le k\le J_+ +j} )=x^{l}_j$ for all $j$.

	We let 
	$$  \widetilde{X}:=\{(x^l)_l\in \prod_{l\in \N} X^{lJ}: x^l=(x^l_k)_{k\in lJ}, f((x^{l+1}_k)_{J_-+j\le k\le J_+ +j} )=x^{l}_j\}. $$
Consider the subset $L$ of  $\widetilde{X} \times \widetilde{X}$ given by
	$$
	L:=\{(x,y)\in \widetilde{X} \times \widetilde{X}: x_k^l=y_{k}^{l+1}, \forall l\ge 0, \forall ~lJ_-\le k\le lJ_+ \}.
	$$
Then we define  a subshift in $\widetilde{X}_\Z$ by 
	$$
	Y:=\{x\in\widetilde{X}_\Z: (x_n, x_{n+1})\in L  \}.
	$$
The map 
	\begin{align*}\pi:&Y\rightarrow \widetilde{X_\mathbb Z},\\
	& (y_k)_{k\in \Z}\mapsto (x_n^l)_{n\in \Z, l\in \N}  
	\end{align*}
	with $y_k=([y_k]_n^l)_{l\in \N, n\in lJ}\in \widetilde{X}$ and $x_n^l=[y_k]_n^{l+k}$ for some/any  $n\in (l+k)J$. It is clear that $\pi$ is a topological conjugacy.
	
\end{proof}	

\begin{rem}
	In general, the space $\widetilde{X}$ may be an infinite-dimensional space. Since we know few about the mean dimension of a subshift of finite type (even the full shift) over an infinite-dimensional space, the natural extension $\widetilde{T_f}$ in Proposition \ref{prop:sft} does not provide information on the mean dimension of $(X^\Z, T_f)$.
\end{rem}

	\bibliographystyle{alpha}
	\bibliography{universal_bib}

\end{document}